\definecolor{labelkey}{rgb}{0.6,0,0}
\numberwithin{equation}{section}
\providecommand{\abs}[1]{\left\lvert#1\right\rvert}
\providecommand{\norm}[1]{\left\|#1\right\|}
\newcommand{\bmu}{\bm{\mu}}
\newcommand{\bvarrho}{\bm{\varrho}}
\newcommand{\PPsi}{\widetilde{\Psi}}
\newcommand{\RR}{\widetilde{R}}
\def\R{\mathbb{R}}
\newtheorem{theorem}{Theorem}[section]
\newtheorem{lemma}[theorem]{Lemma}
\newtheorem{proposition}[theorem]{Proposition}
\theoremstyle{remark}
\newtheorem{remark}[theorem]{Remark}
\begin{document}

\title[Stability of a point charge for the Vlasov-Poisson system: the radial case]{Stability of a point charge for the Vlasov-Poisson system:\\ the radial case}

\begin{abstract}
We consider the Vlasov-Poisson system with initial data a small, radial, absolutely continuous perturbation of a point charge. We show that the solution is global and disperses to infinity via a modified scattering along trajectories of the linearized flow.

This is done by an exact integration of the linearized equation, followed by the analysis of the perturbed Hamiltonian equation in action-angle coordinates.
\end{abstract}

\author{Benoit Pausader}
\address{Brown University}
\email{benoit\_pausader@brown.edu}

\author{Klaus Widmayer}
\address{\'Ecole Polytechnique F\'ed\'erale de Lausanne}
\email{klaus.widmayer@epfl.ch}

\maketitle

\section{Vlasov-Poisson near a point charge}
This article is devoted to the study of the time evolution and asymptotic behavior of a three dimensional gas of charged particles (a plasma) that interact with a point charge. Under suitable assumptions this system can be described via a measure $M$ on $\mathbb{R}^3_{x}\times\mathbb{R}^3_v$ that is transported by the long-range electrostatic force field created by the charge distribution, resulting in the Vlasov-Poisson system 
\begin{equation}
\begin{split}
\partial_tM+\hbox{div}_{x,v}\left(M\mathfrak{V}\right)=0,\qquad \mathfrak{V}=v\cdot \nabla_x+\nabla_x\phi\cdot\nabla_v,\qquad \Delta_x\phi=\int_{\mathbb{R}^3_v}Mdv.
\end{split}
\end{equation}
Since this equation is rotationally invariant, the Dirac mass $M_{eq}=\delta=\delta_{(0,0)}(x,v)$ is a formal stationary solution and we propose to investigate its stability. We consider initial data\footnote{Here the initial continuous density $f_0=\mu_0^2$ is assumed to be non-negative, a condition which is then propagated by the flow and allows us to work with functions $\mu$ in an $L^2$ framework rather than a general non-negative function $f$ in $L^1$ -- see also our previous work \cite{IPWW2020} for more on this.} of the form $M=q_c\delta+q_g\mu^2_0dxdv$, where $q_c>0$ is the charge of the Dirac mass and $q_g>0$ is the charge per particle of the gas, which results in \emph{purely repulsive interactions}. We track the singular and the absolutely continuous parts of a solution as $M(t)=q_c\delta_{(\bar{x}(t),\bar{v}(t))}+q_g\mu^2(t)dxdv$, which yields the coupled system
\begin{equation}\label{NewVP}
\begin{split}
\left(\partial_t+v\cdot\nabla_x+\frac{q}{2}\frac{x-\bar{x}(t)}{\vert x-\bar{x}(t)\vert^3}\cdot\nabla_v\right)\mu+\lambda\nabla_x\psi\cdot\nabla_v\mu&=0,\qquad\Delta_x\psi=\varrho=\int_{\mathbb{R}^3_v}\mu^2dv,\\
\frac{d\bar{x}}{dt}=\bar{v},\qquad\frac{d\bar{v}}{dt}=\overline{q}\nabla_x\psi(\bar{x})
\end{split}
\end{equation}
where $\lambda=q_g^2/(\epsilon_0m_g)>0$, $q=q_cq_g/(2\pi\epsilon_0 m_g)>0$, $\overline{q}=q_cq_g/(\epsilon_0 m_c)>0$ are positive constants\footnote{In these formulas, $\epsilon_0$ is the vacuum permittivity, $m_g$ the inertia of a gas particle and $m_c$ the inertia of the point charge.}.
\subsection{Main result}
Our main result concerns \eqref{NewVP} with radial initial data,  where the point charge is located at the origin. For sufficiently small initial charge distributions $\mu$ we establish the existence and uniqueness of global, strong solutions and we describe their asymptotic behavior as a modified scattering dynamic. While our full result can be most adequately stated in more adapted ``action-angle'' variables (see Theorem \ref{thm:mainfull} below on page \pageref{thm:mainfull}), for the sake of readability we begin here by giving a (weaker) version in standard Cartesian coordinates:
\begin{theorem}\label{thm:main_rough}
Given any radial initial data $\mu_0\in C^1_c(\mathbb{R}^\ast_+\times\mathbb{R})$, there exists $\varepsilon^\ast>0$ such that for any $0<\varepsilon<\varepsilon^\ast$, there exists a unique global strong solution of \eqref{NewVP} with initial data
\begin{equation*}
(\bar{x}(t=0),\bar{v}(t=0))=(0,0),\qquad \mu(t=0)=\varepsilon\mu_0.
\end{equation*}
Moreover, the electric field decays pointwise and there exists an asymptotic profile $\gamma_\infty\in L^2(\mathbb{R}^\ast_+\times\mathbb{R})$ and a Lagrangian map $(\mathcal{R},\mathcal{V})$ such that
\begin{equation*}
\begin{split}
\mu(\mathcal{R},\mathcal{V},t)\to \gamma_\infty(r,v),\qquad t\to\infty.
\end{split}
\end{equation*}

\end{theorem}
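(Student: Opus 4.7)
\medskip
\noindent\textbf{Proof sketch.} The plan is to reduce the system to a one-dimensional problem in radial phase space, trivialize the linearized flow by passing to action–angle variables, and then close a bootstrap on the perturbed transport equation that captures the long-range modified scattering.

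\emph{Step 1: Reduction via radial symmetry.} Radial symmetry is preserved by \eqref{NewVP}, and together with the initial condition $(\bar x,\bar v)(0)=(0,0)$ it forces $\bar x(t)=\bar v(t)=0$ for all $t$ (any radial electric field vanishes at the origin). The density $\mu$ then reduces to a function of $r=|x|$, the radial velocity $v=\hat x\cdot v$ and the conserved angular momentum $L=|x\wedge v|$; after integrating out angular variables one is left with a $1{+}1$-dimensional transport equation coupled to the Poisson relation on $(0,\infty)$, driven by the repulsive Coulomb field of the point charge plus a small self-consistent correction.

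\emph{Step 2: Integration of the linearized flow and action–angle coordinates.} When $\lambda=0$ the characteristics solve the repulsive Kepler problem on $\{r>0\}$ with effective potential $\tfrac{q}{2r}+\tfrac{L^2}{2r^2}$. Because the interaction is purely repulsive, every energy level is unbounded and every trajectory scatters to infinity; in particular characteristics starting in a fixed compact subset of $\mathbb{R}^\ast_+\times\mathbb{R}$ never reach $r=0$. The resulting one-degree-of-freedom Hamiltonian system is integrable, and on each energy–angular momentum leaf I would introduce an angle $\theta$ conjugate to an action $I$ (of the scattering-coordinate type: $\theta$ parametrizes the orbit by time-of-flight, normalized so that $\dot\theta=\omega(I,L)$ with an explicit frequency $\omega$ computable from the Kepler integrals). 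In these coordinates the linearized transport operator is simply $\partial_t+\omega(I,L)\partial_\theta$.

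\emph{Step 3: The perturbed equation and the bootstrap.} In action–angle variables \eqref{NewVP} becomes
\begin{equation*}
\partial_t\mu+\omega(I,L)\partial_\theta\mu+\lambda\{\Psi,\mu\}=0,
\end{equation*}
where $\Psi$ is the self-consistent potential expressed in the new variables and $\{\cdot,\cdot\}$ the associated Poisson bracket. Local existence and propagation of the support away from $r=0$ follow from standard characteristics arguments for $\mu_0\in C^1_c$. The global argument is a bootstrap: assuming suitable smallness of $\mu$ and a finite number of derivatives in $(\theta,I,L)$, the spatial density $\varrho=\int\mu^2\,dv$ is supported on phase-space regions that disperse along the linearized flow, which produces pointwise decay of $\nabla_x\psi$ at a rate compatible with integrating the $\lambda\{\Psi,\mu\}$ term. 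The key quantitative input is the dispersive estimate for $\nabla_x\psi$ obtained by stationary-phase / non-stationary-phase analysis on the angle variable $\theta$ in the expression for $\varrho$, using that the frequency $\omega(I,L)$ is non-degenerate.

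\emph{Step 4: Modified scattering.} The time integral of $\nabla_x\psi$ is logarithmically divergent (Coulomb long-range effect), so $\mu$ composed with the linearized flow cannot converge; instead one identifies a slow effective drift on the action–angle torus and defines a modified Lagrangian map $(\mathcal R,\mathcal V)$ along which $\mu$ does converge to a profile $\gamma_\infty$. Concretely, after isolating the resonant ($\theta$-averaged) part of $\{\Psi,\mu\}$, one subtracts a logarithmic correction in the phase and shows that the remainder is integrable in time; passage back to $(r,v)$ via the action–angle change of variables yields the statement of the theorem. I expect the main obstacle to be exactly this last point: controlling the action–angle expansion of $\Psi$ uniformly up to the separatrix $L\to 0$ and near the turning points of the Kepler orbits, where the change of variables degenerates, and simultaneously extracting the precise resonant contribution that must be absorbed into the modified scattering map rather than estimated away.
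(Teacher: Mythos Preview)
Your overall strategy---pass to action--angle coordinates to trivialize the linear Kepler flow, run a bootstrap on the conjugated nonlinear equation, and then extract a logarithmic phase correction for modified scattering---is exactly the paper's approach. Two points deserve correction, however.

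First, you have misread the radial setup. In the strong radial setting of the theorem the data $\mu_0$ depends only on $r=|x|$ and the \emph{signed} radial velocity, so the microscopic angular momentum $L=|x\wedge v|$ vanishes identically on the support of the distribution. There is no centrifugal barrier $L^2/(2r^2)$, no separatrix, and the reduced phase space is genuinely two-dimensional, $(r,v)\in\mathbb{R}^\ast_+\times\mathbb{R}$. The action is simply $a=\sqrt{v^2+q/r}$, the frequency is $\omega(a)=a$, and the linearized operator is the pure free streaming $\partial_t+a\partial_\theta$. The ``$L\to0$ separatrix'' difficulty you anticipate does not arise; what does arise is the degeneracy of $\partial_\theta R$ at the periapsis $\theta=0$, which the paper handles by explicit formulas for the structure functions $G,H$.

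Second, the dispersive decay of the electric field is not obtained by stationary/non-stationary phase in $\theta$: the relevant expressions involve a Dirac measure $\delta(\widetilde R(\vartheta,\alpha)-r)$, not an oscillatory kernel. The paper instead splits phase space into a ``bulk'' region $\mathcal{B}=\{a\ge t^{-1/4},\ |\theta|\le ta/2\}$ where $\widetilde R\sim ta$ and the map $a\mapsto\widetilde R$ is a good change of variables, and a complement whose contribution is killed by the propagated moments in $\theta$ and $a^{-1}$. This geometric decomposition, rather than oscillatory-integral machinery, is what produces the $t^{-3/2}$ decay of $a^{-1}\partial_\theta\widetilde\Psi$ and the $t^{-1}$ behavior of $\partial_a\widetilde\Psi$ needed both to close the bootstrap and to isolate the resonant term $\frac{1}{t}\mathcal{E}_\infty(a)\partial_\theta\gamma$ that drives the modified scattering.
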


\begin{remark}
\begin{enumerate}
\item Our main theorem is in fact much more precise and requires fewer assumptions, but is better stated in adapted ``action angle'' variables. We refer to Theorem \ref{thm:mainfull}.

\item The Lagrangian map can be written in terms of an asymptotic ``electric field profile'' $\mathcal{E}_\infty$:
\begin{equation*}
\begin{split}
\mathcal{R}(r,v,t)&=t\sqrt{v^2+\frac{q}{r}}-\frac{rq}{2(q+rv^2)}\ln(t)-\lambda\mathcal{E}_\infty(\sqrt{v^2+\frac{q}{r}})\ln(t)+O(1),\\
\mathcal{V}(r,v,t)&=\sqrt{v^2+\frac{q}{r}}-\frac{q}{2\sqrt{v^2+\frac{q}{r}}}\frac{1}{t}+O(\frac{\ln t}{t^2}).
\end{split}
\end{equation*}
The first term corresponds to conservation of the energy along trajectories, the second term comes from a linear correction and the third term on the first line comes from a nonlinear correction to the position. This can be compared with the asymptotic behavior close to vacuum in \cite{IPWW2020,Pan2020} by setting $q=0$.

\end{enumerate}
\end{remark}

\subsubsection{Prior work}
In the absence of a point charge, the Vlasov-Poisson system has been extensively studied and we only refer to \cite{BD1985,LP1991,Pfa1992,Sch1991} for early references on global wellposedness and dispersion analysis, to \cite{CK2016,IPWW2020,Pan2020} for more recent results describing the asymptotic behavior, to \cite{Gla1996,Rei2007} for book references and  to \cite{BM2018} for a historical review.

The presence of a point charge introduces singular electric fields and significantly complicates the analysis. Nevertheless, global existence and uniqueness of strong solutions when the support of the density is separated from the point charge has been established in \cite{MMP2011}, see also \cite{CM2010} and references therein, while global existence of weak solutions for more general support was proved in \cite{DMS2015} with subsequent improvements in \cite{LZ2017,LZ2018,Mio2016}. We also refer to \cite{CLS2018} where ``Lagrangian solutions'' are studied and to \cite{CMMP2012,CZW2015} for works in the case of attractive interactions. Concentration, creation of a point charge and subsequent lack of uniqueness were studied in a related system for ions in $1d$, see \cite{MMZ1994,ZM1994}. To the best of our knowledge, there are no works concerning the asymptotic behavior of such solutions.

The stability of other equilibriums has been considered for the Vlasov-Poisson system with repulsive interactions, most notably in connection to Landau damping \cite{BMM2018,FR2016,HNR2019,MV2011}. In the case of Vlasov-Poisson with attractive interactions, there are many more equilibriums and their linear and nonlinear (in)stability have been studied \cite{GL2017,GS1995,LMR2008,Mou2013,Pen1960}, but the analysis of asymptotic stability is very challenging. We also refer to \cite{IJ2019} which studies the stability of a Dirac mass in the context of the $2d$ Euler equation.

\subsubsection{Our approach}

In previous works on \eqref{NewVP}, the Lagrangian approach allows to integrate the solutions against characteristics but faces the problem of a singular electric field, while a purely Eulerian method leads to a poor control of the solutions, which makes it difficult to study the asymptotic behavior. In this paper, we introduce a different method based on the decomposition of the Hamiltonian to rewrite \eqref{NewVP} as
\begin{equation*}
\begin{split}
\partial_t\mu+\{\mathcal{H}_{0}+\mathcal{H}_{pert},\mu\}=0,
\end{split}
\end{equation*}
where the linearized Hamiltonian $\mathcal{H}_0$ is given in \eqref{LinearHamiltonian} and the nonlinear Hamiltonian $\mathcal{H}_{pert}$ corresponds to the self-generated electrostatic potential \eqref{DefPPsi}. In short, our approach combines a Lagrangian analysis of the linearized problem with an Eulerian PDE framework in the nonlinear analysis, all the while respecting the symplectic structure. This amounts to considering solutions as superpositions of measures on each trajectory of the linearized flow instead of measures on the whole phase space.

On a technical level, one faces the two difficulties of a singular transport field and the nonlinearity separately: the singular electric field created by the point charge is present in the linearized equation coming from $\mathcal{H}_0$, which is integrated exactly. The nonlinearity comes from the perturbed Hamiltonian $\mathcal{H}_{pert}$, but this leads to a simple nonlinear equation, with a nonlinearity which is smoothing.

More precisely, in a first step we analyze the characteristic equations of the linear problem associated to \eqref{NewVP}. These turn out to be the classical ODEs of the Kepler problem, which can be integrated in adapted ``action-angle'' coordinates. In these, the geometry of the characteristic curves is straightened and the linear flow is solved explicitly as a linear map. To treat the nonlinear problem, we conjugate by the linear flow and study the resulting unknown in an Eulerian, $L^2$ based PDE framework, based on energy estimates as in our recent work on the vacuum case \cite{IPWW2020}. This allows us to propagate the required regularity and moments to obtain a global strong solution. Moreover, we can readily identify the asymptotic dynamic: in a mixing type mechanism, the dependence on the ``angles'' is eliminated from the asymptotic electrostatic fields, and the scattering of solutions is modified by a field defined in terms of the ``actions''.

We remark on some features and context of our techniques.
\begin{enumerate}
 \item Since the system \eqref{NewVP} is Hamiltonian and we solve the linearized system through a canonical change of unknown (i.e.\ a diffeomorphism respecting the symplectic structure), the nonlinear problem becomes quite simple after conjugation, see \eqref{eq:VPPoisson}.
 \item The moments we propagate are conserved by the linearized flow, unlike the physical moments in $\langle r\rangle$, $\langle v\rangle$. In fact, even in the nonlinear problem it is quite direct to globally propagate moments in action-angle variables, which already gives the existence of global weak solutions.  
 \item The asymptotic dynamic is easy to exhibit in action-angle variables through inspection of the formulas for the asymptotic electrostatic fields (see \eqref{AsymptoticDynamicsIntro}).
 \item It is notable that we do not require any separation between the point charge and the continuous distribution $\mu$, addressing a question raised in \cite[p.\ 376]{DMS2015}, (see also \cite{MMP2011}).
 \item We expect the methods presented here, based on integration of the linearized equation through ``action-angle'' coordinates, to be broadly applicable, both for local existence of rough solutions and especially for the analysis of long time behavior whenever the linearized equation corresponds to a completely integrable ODE without closed trajectories. This should include a large number of radial problems for plasmas since $1+1$ Hamiltonian ODEs can be integrated by phase portrait.
 \item The usefulness of action-angle variables for the Vlasov-Poisson equation was already exhibited in \cite{FHR2021,GL2017,horsi2017} where the authors produce a large class of $1d$ BGK-type waves which are linearly stable.
\end{enumerate}

\subsubsection{Remarks on the physical setup}
Our primary interest here is to study the interaction of a gas of ions or electrons interacting with a (similarly) charged particle, subject only to electrostatic forces. In this case, up to rescaling, we may assume that $\lambda=1$ in \eqref{NewVP}.

Taking into account gravitational effects, we may also consider the more general case of a large charged and massive point particle with mass $m_c$ and charge $q_c$ interacting with a gas of small particles with mass-per-particle $m_g$ and charge-per-particle $q_g$ subject to both gravitational and electrostatic forces. In this case, our result holds whenever the principal gas-point particle interaction is repulsive, i.e. when (in appropriates physical units)
\begin{equation}\label{CondOpenTraj}
q_cq_g> m_cm_g,
\end{equation}
whereas (due to the small data assumption on the gas at initial time) the gas-gas interaction may be repulsive $(\lambda=1)$ or attractive $(\lambda=-1$) depending on the sign of $(q_g)^2-(m_g)^2$. 

The situation that is beyond the scope of our analysis is the case when the inequality in \eqref{CondOpenTraj} is reversed and some trajectories of the linearized system are closed. Note that in this case, even the local existence theory is incomplete.

\subsection{Overview and ideas of the proof}

We note that in the particular case of radial initial conditions\footnote{This is a strong notion of radial solutions. A weaker notion would be to consider functions which are jointly invariant under rotations, i.e.\ a density of the form $\mu=\mu(\vert x\vert,\vert v\vert,\ell)$, where $\ell=x\times v$ is the microscopic angular momentum. We refer e.g. to \cite{Pan2020} for the study of such solutions.}
\begin{equation}
 \begin{cases}
  \mu(x,v,t=0)&=\mu_0(\abs{x},\abs{v}),\\ (\bar{x},\bar{v})(t=0)&=(0,0),
 \end{cases}
\end{equation}
the Dirac mass in \eqref{NewVP} does not move (i.e.\ $\bar{x}(t)=\bar{v}(t)=0$) and the continuous particle distribution $\mu(t)$ of the solution is a radial function. The equations \eqref{NewVP} reduce to the following system for $\mu(x,v,t)$:
\begin{equation}\label{eq:NewVPrad}
 \left(\partial_t+v\cdot\nabla_x+\frac{q}{2}\frac{x}{\vert x\vert^3}\cdot\nabla_v\right)\mu+\lambda\nabla_x\psi\cdot\nabla_v\mu=0,\qquad\Delta_x\psi=\varrho=\int_{\mathbb{R}^3_v}\mu^2dv.
\end{equation}
Per a slight abuse of notation with $r:=\abs{x}$, $\varrho(r,t)=\varrho(x,t)$, by radiality the electric field $E:=-\nabla_x\psi$ of the ensemble can be computed as
\begin{equation}
 E(x,t)=-\nabla_x\psi(x,t)=-\partial_r\psi(r,t)\frac{x}{r},\qquad \partial_r\psi(r,t)=\frac{1}{r^2}\int_{s=0}^r\varrho(s,t)s^2ds.
\end{equation}

\subsubsection{The ``radial'' phase space}
Since as discussed the equations \eqref{NewVP} are invariant under spherical symmetry and we will work with spherically symmetric data, it is more convenient to work on the phase space $(r,v)\in\R_{+}^\ast\times\R$ (rather than $(x,v)\in\R^3\times\R^3$). Note that $r^2v^2drdv$ is the natural measure corresponding to that of radially symmetric functions on $\R^3\times\R^3$, and hence we will work with the new density
\begin{equation}
 \bm\mu(r,v,t):=rv\mu(r,v,t).
\end{equation}
This is chosen such that the (conserved) mass is the square of the $L^2$ norm of both $\bm\mu$ on $\R_+^\ast\times\R$ and $\mu$ on $\R^3\times\R^3$, i.e.\ we have
\begin{equation}
 \iint_{r,v}\bm\mu^2(r,v,t)\, drdv=\iint_{x,v}\mu^2(x,v,t)\, dxdv=\iint_{x,v}\mu^2_0(x,v)\,dxdv.
\end{equation}
Moreover, the equations for $\bm\mu$ simply read
\begin{equation}\label{eq:VPrad}
\begin{aligned}
 &\left(\partial_t+v\partial_r+\frac{q}{2r^2}\partial_v\right)\bm\mu=\lambda{\bm E}\partial_v\bmu,\\&\qquad{\bm E}(r,t):=-\partial_r\psi(r,t)=\frac{1}{r^2}\int_{s=0}^r\bvarrho(s,t)\,ds,\quad \bvarrho(s,t)=\int\bmu^2(s,v,t)\,dv.
\end{aligned} 
\end{equation}
This equation is Hamiltonian and can be equivalently written as
\begin{equation}\label{NonlinearHamiltonianStructure}
\begin{split}
2\partial_t\bm\mu+\{\mathcal{H},\bm\mu \}=0,\qquad  \mathcal{H}(r,v):=v^2+\frac{q}{r}-2\lambda\psi(r),\qquad \{f,g\}&:=\partial_vf\partial_rg-\partial_rf\partial_vg,
\end{split}
\end{equation}
which leads to the conservation of energy
\begin{equation*}
\begin{split}
{\bf \mathcal{H}}_{total}(t)&:=\iint \mathcal{H}(r,v)\cdot \bm\mu^2\,drdv=\iint \left(v^2+\frac{q}{ r}\right)\cdot \bm\mu^2\,drdv+\lambda\int {\bm E}^2\cdot r^2dr={\bf \mathcal{H}}_{total}(0).
\end{split}
\end{equation*}

\subsubsection{The linearized system}
In order to study \eqref{eq:VPrad}, we first consider the linearized equation for a function $f(r,v,t)$:
\begin{equation}\label{eq:linVP_intro}
\begin{split}
\left(\partial_t+v\partial_r+\frac{q}{2r^2}\partial_v\right)f=0.
\end{split}
\end{equation}
This linear transport equation can be solved directly via its characteristic equations
\begin{equation}\label{eq:ODE_intro}
\dot{r}=v,\qquad\dot{v}=\frac{q}{2r^2}.
\end{equation}
One recognizes here the classical Kepler problem in the radial setting, which can be integrated using generalized\footnote{Action-angle variables traditionally refer to the case when the trajectories are bounded.} ``action angle'' coordinates $(\theta, a)$ (see also Figure \ref{fig:action-angle}).

\begin{figure}[h]
 \centering
 \includegraphics[width=0.9\textwidth]{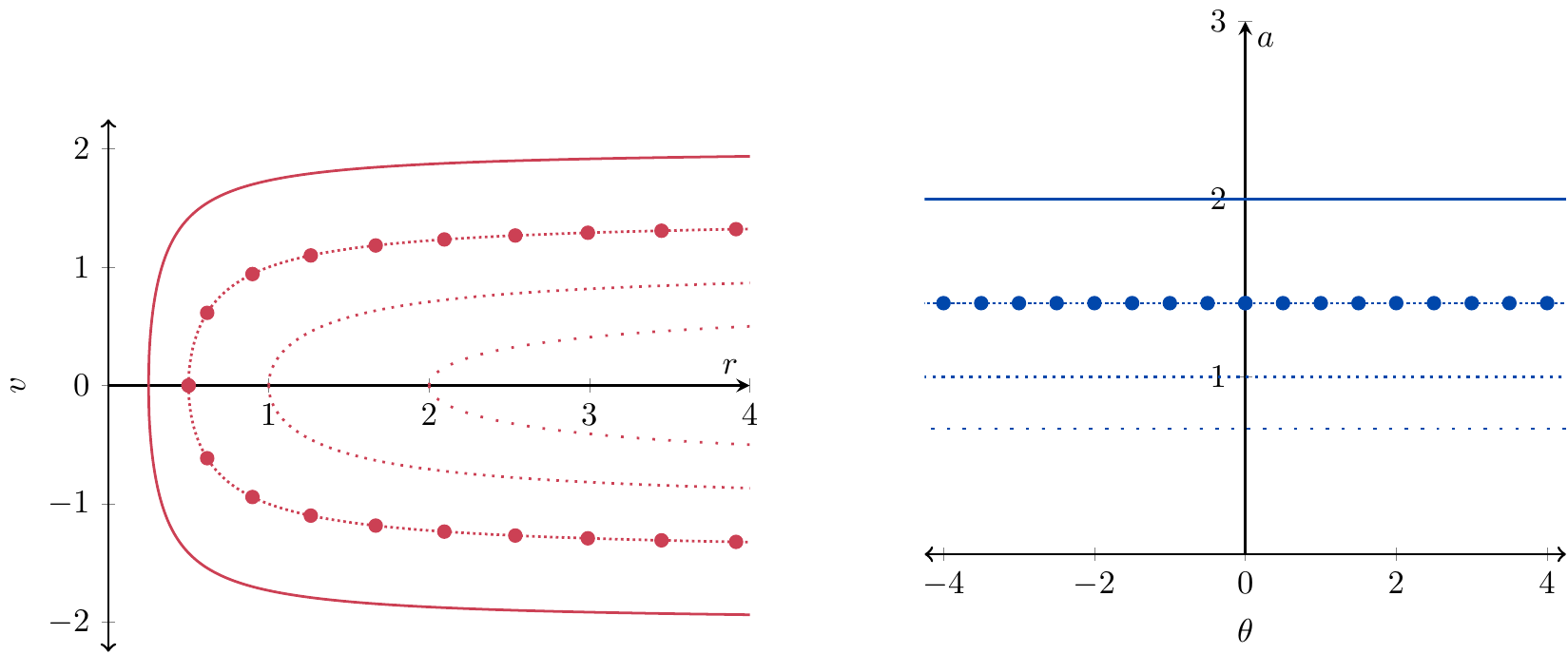}
 \caption{The trajectories of solutions of \eqref{eq:ODE_intro}, on the left in physical space $(r,v)$ coordinates, on the right in action-angle coordinates $(\theta,a)$ from Lemma \ref{LinearizedProblemLem}. The large dots correspond to points that are uniformly spaced in $\theta$ for a fixed $a$ (right), and their image in $(r,v)$ (left).}
 \label{fig:action-angle}
\end{figure}

{\begin{lemma}\label{LinearizedProblemLem}
 There exists a canonical transformation to ``action-angle'' coordinates:
 \begin{equation}
\mathbb{R}_+^\ast\times\mathbb{R}\ni (r,v)\mapsto(\Theta(r,v),\mathcal{A}(r,v))\in\mathbb{R}\times\mathbb{R}_+^\ast
 \end{equation}
 with inverse $(R(\theta,a),V(\theta,a))$, such that $f$ solves \eqref{eq:linVP_intro} if and only if
 \begin{equation*}
 g(\theta,a,t):=f(R(\theta,a),V(\theta,a),t)
 \end{equation*}
 solves the free streaming equation
 \begin{equation}\label{FS}
 \left(\partial_t+a\partial_\theta\right)g=0.
 \end{equation}
 \end{lemma}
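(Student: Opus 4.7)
The plan is to solve \eqref{eq:ODE_intro} completely by recognizing it as the purely repulsive radial Kepler problem, identify the conserved energy as the action, and build the conjugate angle via a Hamilton-Jacobi generating function, finally checking smoothness across the turning point using a hyperbolic substitution.

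\smallskip

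\textbf{Step 1: action.} Note that \eqref{eq:linVP_intro} is Hamiltonian transport under $H_0(r,v) = v^2 + q/r$. Since $q>0$, the level sets of $H_0$ foliate $\R_+^\ast \times \R$ into non-closed trajectories, each with a unique turning point $(q/a^2, 0)$ and escaping to $r \to \infty$, $v \to \pm a$ at $t \to \pm\infty$. This makes $\mathcal{A}(r,v) := \sqrt{v^2 + q/r}$ the natural action: it is conserved along characteristics and labels trajectories bijectively by $a \in \R_+^\ast$.

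\smallskip

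\textbf{Step 2: angle via generating function.} On the outgoing branch $\{v>0\}$, define
\begin{equation*}
S(r,a) := \int_{r_\ast(a)}^r \sqrt{a^2 - q/s}\, ds, \qquad r_\ast(a) := q/a^2,
\end{equation*}
so that $(\partial_r S)^2 + q/r = a^2$ and hence $v = \partial_r S(r,\mathcal{A})$. Set $\Theta := \mathrm{sgn}(v)\cdot \partial_a S(r,a)\big|_{a = \mathcal A} = \mathrm{sgn}(v)\cdot \mathcal A \int_{r_\ast(\mathcal A)}^r ds/\sqrt{\mathcal A^2 - q/s}$. Standard Hamilton-Jacobi theory then guarantees that $(r,v) \mapsto (\Theta,\mathcal A)$ is canonical; equivalently one checks directly that $dr\wedge dv = d\Theta \wedge d\mathcal A$.

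\smallskip

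\textbf{Step 3: global smoothness.} The formula above has a coordinate singularity at the turning point. To resolve it I change parameters via $\tau \in \R$ by
\begin{equation*}
r = (q/a^2)(1+\tau^2), \qquad v = a\tau/\sqrt{1+\tau^2},
\end{equation*}
which smoothly covers \emph{both} branches ($\tau$ and $v$ share sign, $\tau=0$ at the turning point). Substituting $s = (q/a^2)(1+\tau'^2)$ in the integral gives the closed form
\begin{equation*}
\Theta(\tau,a) = (q/a^2)\bigl[\tau\sqrt{1+\tau^2} + \sinh^{-1}\tau\bigr],
\end{equation*}
which is $C^\infty$ in $(\tau,a)$ on $\R \times \R_+^\ast$, odd and strictly increasing in $\tau$ (derivative $(2q/a^2)\sqrt{1+\tau^2}$) with range all of $\R$. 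Thus the composite $(r,v) \leftrightarrow (\tau,a) \leftrightarrow (\Theta,a)$ is a global $C^\infty$-diffeomorphism between $\R_+^\ast \times \R$ and $\R \times \R_+^\ast$.

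\smallskip

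\textbf{Step 4: transformed equation.} Along any characteristic of \eqref{eq:ODE_intro}, $\dot{\mathcal A}=0$ and, by construction of $\Theta$ as $\mathcal A$ times elapsed time from the turning point, $\dot\Theta = \mathcal A$. Hence $f$ is constant along characteristics iff $g(\theta,a,t) := f(R(\theta,a),V(\theta,a),t)$ is constant along the lines $\theta = \theta_0 + at$, which is precisely \eqref{FS}. The chain rule applied in both directions gives the equivalence, and a routine computation using the explicit formulas for $R,V$ verifies the identities $a\,\partial_\theta R = V$ and $a\,\partial_\theta V = q/(2R^2)$ underlying this.

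\smallskip

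The main technical obstacle is Step 3: the angle defined from the integral representation of $S$ degenerates at $v=0$, so the $\tau$-substitution is essential to obtain a bona fide $C^\infty$-diffeomorphism in a neighborhood of the turning point and to make the closed-form asymptotic expansions needed later in the paper transparent. Verifying canonicity is then just the identity $\partial_\tau \Theta = (2q/a^2)\sqrt{1+\tau^2}$ combined with the analogous formulas for $\partial_\tau R$, $\partial_\tau V$.
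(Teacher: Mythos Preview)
Your proof is correct and lands on exactly the same explicit formula as the paper: under $s=r/r_{\min}=1+\tau^2$, your closed form $\Theta=(q/a^2)\bigl[\tau\sqrt{1+\tau^2}+\sinh^{-1}\tau\bigr]$ coincides with the paper's $\Theta=\mathrm{sgn}(v)\,r_{\min}\,G(r/r_{\min})$ where $G(s)=\sqrt{s(s-1)}+\ln(\sqrt{s}+\sqrt{s-1})$. The paper simply posits this formula (together with the inverse $R,V$ via $H=G^{-1}$) and then verifies by direct computation that $\dot{\mathcal A}=0$, $\dot\Theta=\mathcal A$, and $d\Theta\wedge d\mathcal A=dr\wedge dv$, handling the turning point by observing that $G(1)=0$ kills the apparently singular factor $v/|v|$.

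The difference is in how the formula is obtained and how smoothness at $v=0$ is argued. You derive $\Theta$ from a Hamilton--Jacobi generating function, which explains \emph{why} the formula works rather than checking it after the fact; and your hyperbolic parameter $\tau$ globalizes the chart so that no $\mathrm{sgn}(v)$ ever appears and $C^\infty$-regularity across the turning point is manifest. This is a cleaner treatment of the one delicate point, at the modest cost of an auxiliary change of variables. The paper's bare-hands verification is shorter but leaves the reader to unpack the $v=0$ case from the vanishing $G(1)=0$. Either route is fine; they are variants of the same argument rather than genuinely different proofs.
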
}

\subsubsection{Nonlinear analysis}
Since \eqref{FS} can be solved directly, we conjugate by this change of variables to stabilize the linearized system, thus defining $\gamma$ as follows:
\begin{equation}\label{NewUnknown}
\begin{split}
\gamma(\theta,a,t)&:=\bmu(R(\theta+ta,a),V(\theta+ta,a),t),\\
\bmu(r,v,t)&=\gamma(\Theta(r,v)-t\mathcal{A}(r,v),\mathcal{A}(r,v),t).
\end{split}
\end{equation}
Since the change of variable preserves the symplectic structure, we find that the full nonlinear problem \eqref{eq:VPrad} is equivalent to
\begin{equation}\label{eq:VPPoisson}
  \partial_t\gamma=\lambda\{\PPsi,\gamma\},\qquad\{f,g\}=\partial_af\partial_\theta g-\partial_\theta f\partial_ag.
\end{equation}
Here the potential can be expressed in action angle coordinates as follows:
\begin{equation}\label{DefPPsi}
 \PPsi(\theta,a,t):=\iint_{\vartheta,\alpha}\frac{1}{\max\{R(\theta+ta,a),R(\vartheta+t\alpha,\alpha)\}}\gamma^2(\vartheta,\alpha,t)\, d\vartheta d\alpha.
\end{equation}

\begin{remark}
 While the trajectories of the linear equation \eqref{eq:linVP} are straight lines in action-angle variables, in physical variables they correspond to an incoming ray followed by an outgoing one traced at varying velocities. 
 
 For the nonlinear problem this creates extra challenges, as interactions can occur over vastly disparate spatial scales. As the below Figure \ref{fig:caustics} illustrates, in some regimes the evolution $R(\theta+ta,a)$ is not a simple function of $(\theta,a)$, from which one of the two variables can be recovered once the other is known (see also Lemma \ref{LemDiracMeasureDensity} below).
 
\begin{figure}[h]
 \centering
 \includegraphics[width=0.7\textwidth]{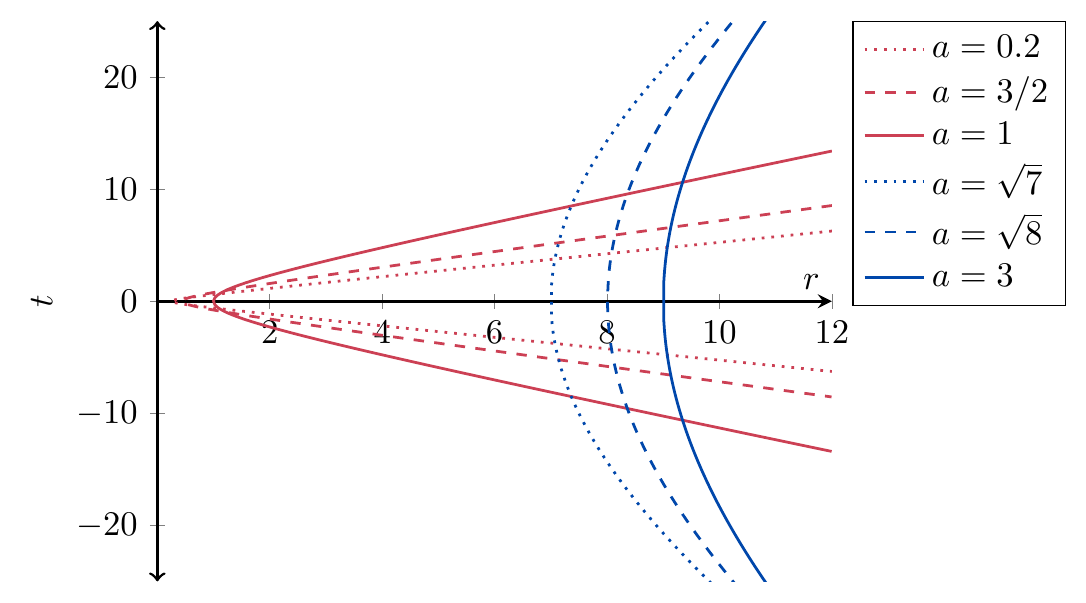}
 \caption{Fronts $R(ta,a)$ of particles in the nonlinear problem, starting at $\theta=0$.}
 \label{fig:caustics}
\end{figure}

\end{remark}

It remains to study solutions to \eqref{eq:VPPoisson}. The dispersion mechanism is accounted for through the conjugation with the linearized flow and we hope to show that this picture remains true when we add the remaining nonlinear contribution, i.e.\ we expect that solutions to \eqref{eq:VPPoisson} do not change too much over time. We first use a bootstrap argument to propagate strong norms, which suffices to obtain global existence and decay of the electric field:

\begin{proposition}\label{prop:deriv_boot}
There exists $\varepsilon^\ast$ such that for all $0<\varepsilon_0\le \varepsilon_1\le\delta<\varepsilon^\ast$, the following holds. Let $\gamma$ be a solution to \eqref{eq:VPPoisson} with initial data $\gamma_0$ on $0\le t\le T$ and assume that for $0\le t\le T$,
\begin{equation*}
\begin{split}
\Vert \left(a^{-20}+\theta^{20}+a^{20}\right)\gamma_0\Vert_{L^2_{\theta,a}}+\Vert (a+a^{-1})\partial_\theta\gamma_0\Vert_{L^2_{\theta,a}}+\Vert a\partial_a\gamma_0\Vert_{L^2_{\theta,a}}&\le\varepsilon_0,\\
\Vert \left(a^{-20}+\theta^{20}+a^{20}\right)\gamma(t)\Vert_{L^2_{\theta,a}}+\Vert (a+a^{-1})\partial_\theta\gamma(t)\Vert_{L^2_{\theta,a}}+\Vert a\partial_a\gamma(t)\Vert_{L^2_{\theta,a}}&\le \varepsilon_1\langle t\rangle^{\delta},\\
\end{split}
\end{equation*}
then in fact
\begin{equation*}
\begin{split}
\Vert \left(a^{-20}+a^{20}\right)\gamma\Vert_{L^2_{\theta,a}}+\Vert (a+a^{-1})\partial_\theta\gamma\Vert_{L^2_{\theta,a}}&\le\varepsilon_0+\varepsilon_1^\frac{3}{2},\\
\Vert \theta^{20}\gamma\Vert_{L^2_{\theta,a}}+\Vert a\partial_a\gamma\Vert_{L^2_{\theta,a}}&\le\varepsilon_0+\varepsilon_1^\frac{3}{2} t^{\delta}.
\end{split}
\end{equation*}
\end{proposition}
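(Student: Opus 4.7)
The strategy is to derive differential identities for each of the weighted $L^2$ norms appearing in the conclusion and to integrate them in time using pointwise decay estimates on $\widetilde{\Psi}$ and its derivatives. Since $\partial_t\gamma = \lambda\{\widetilde{\Psi},\gamma\}$ is transport along the Hamiltonian vector field of $\widetilde{\Psi}$, the plain $L^2$ norm of $\gamma$ is exactly conserved, and for any smooth weight $w(\theta,a)$ the antisymmetry of the Poisson bracket together with integration by parts yields
\[
\frac{d}{dt}\|w\gamma\|_{L^2}^2 = -\lambda\iint\{\widetilde{\Psi},w^2\}\,\gamma^2\,d\theta\,da.
\]
When $w=a^{\pm 20}$ (independent of $\theta$), only $\partial_\theta\widetilde{\Psi}$ enters on the right; when $w=\theta^{20}$ (independent of $a$), only $\partial_a\widetilde{\Psi}$. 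For the first-derivative norms I commute $\partial_\theta$ or $a\partial_a$ through the equation, producing a forced transport $(\partial_t-\lambda\{\widetilde{\Psi},\cdot\})\partial_\bullet\gamma = \lambda\{\partial_\bullet\widetilde{\Psi},\gamma\}$ (plus a lower-order commutator $\lambda\partial_\theta\widetilde{\Psi}\,\partial_a\gamma$ in the $a\partial_a$ case), whose weighted $L^2$ estimate involves second derivatives of $\widetilde{\Psi}$.

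The key nonlinear input comes from the large-time asymptotics of the radial Kepler flow $(R(\tau,a),V(\tau,a))$ from Lemma~\ref{LinearizedProblemLem}: as $|\tau|\to\infty$ one has $R(\tau,a)\sim a|\tau|$ and $\partial_\tau R=V\to\pm a$, whereas $\partial_a R$ grows linearly in $\tau$. Specializing to $\tau=\theta+ta$ and inserting into \eqref{DefPPsi}, I would prove schematic pointwise bounds of the form
\[
|\widetilde{\Psi}|\lesssim\frac{N(\gamma)}{a\,t},\qquad |\partial_\theta\widetilde{\Psi}|\lesssim\frac{N(\gamma)}{a\,t^2},\qquad |\partial_a\widetilde{\Psi}|\lesssim\frac{N(\gamma)}{a^2\,t},
\]
with matching bounds for second derivatives, where $N(\gamma)$ is a suitable quadratic combination of the weighted $L^2$ norms controlled by the bootstrap. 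The $t^{-2}$ decay of $\partial_\theta\widetilde{\Psi}$ versus the $t^{-1}$ decay of $\partial_a\widetilde{\Psi}$ reflects precisely that $\partial_\theta R=V$ stays bounded in $t$ while $\partial_a R\sim ta$ grows, and this asymmetry is the mechanism behind the asymmetry in the conclusion.

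Inserting these bounds into the energy identities, the norms $\|a^{\pm 20}\gamma\|$ and $\|(a+a^{-1})\partial_\theta\gamma\|$ receive a source of order $\varepsilon_1^3\langle t\rangle^{2\delta-2}$, integrable in $t$, and therefore close with the uniform bound $\varepsilon_0+\varepsilon_1^{3/2}$. In contrast, $\|\theta^{20}\gamma\|$ and $\|a\partial_a\gamma\|$ pick up a source of order $\varepsilon_1^3\langle t\rangle^{2\delta-1}$ which integrates only to $\varepsilon_1^3\langle t\rangle^{2\delta}$, consistent with the allowed $t^\delta$ growth. The exponent $\tfrac{3}{2}$ (rather than $2$) is the usual consequence of taking a square root after a quadratic gain and is absorbed by choosing $\varepsilon^\ast$ small. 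The hard part will be establishing the pointwise decay estimates for $\widetilde{\Psi}$ and its first and second derivatives uniformly in $(\theta,a)$ with the correct $a$-powers: the Kepler characteristics degenerate in the parabolic limit $a\to 0$, the quantity $\partial_a R\sim ta$ grows in time, and the representation \eqref{DefPPsi} involves a non-smooth $\max$, so these bounds must be extracted by splitting the $(\vartheta,\alpha)$ integral according to which of the two arguments dominates and by using the large weights $\alpha^{\pm 20}$ and $\vartheta^{20}$ in the bootstrap to trade integrability in $(\vartheta,\alpha)$ against the slow decay of $1/R$.
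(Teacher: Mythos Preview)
Your overall strategy matches the paper's: energy identities for weighted $L^2$-norms combined with pointwise decay of derivatives of $\widetilde\Psi$, exploiting the asymmetry between $\partial_\theta R$ (bounded) and $\partial_a R\sim t$. Two points deserve correction.

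First, your schematic decay rates come from the bulk heuristic $R\sim at$ and are too strong to hold \emph{uniformly} in $(\theta,a)$. The paper only obtains $a^{-1}|\partial_\theta\widetilde\Psi|\lesssim t^{-3/2}$ (Proposition~\ref{BoundsOnE}), not $t^{-2}$, by invoking the global lower bound $a^2R\ge q$; and $|\partial_a\widetilde\Psi|$ carries an extra factor $|\theta|+a^{-3}$ rather than being uniform. These weaker rates still close the estimates (the stray $|\theta|$ is absorbed by the structure $\partial_a\widetilde\Psi\cdot\theta^{p-1}\gamma$ in the commutator), but the non-bulk region genuinely caps the available decay.

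Second, and more importantly, your phrase ``matching bounds for second derivatives'' hides a structural point without which your accounting fails. The paper's estimate (Proposition~\ref{ControlSecondDerPsi}) reads
\[
\frac{a^2}{1+a^2}\,\bigl|\partial_a^2\widetilde\Psi\bigr|\ \lesssim\ t^{-1}\Bigl[\|a\partial_a\gamma\|_{L^2}\,\|a^{-1}\gamma\|_{L^2}+\|(a^2+a^{-2})\gamma\|_{L^2}^2\Bigr]+t^{-4/3}N_1,
\]
so the non-integrable $t^{-1}$ term contains the growing norm $\|a\partial_a\gamma\|$ linearly. In the energy identity for $a\partial_a\gamma$ this term is multiplied by $\|(a+a^{-1})\partial_\theta\gamma\|$; if one only uses the bootstrap bound $\varepsilon_1 t^\delta$ for the latter, the Gr\"onwall coefficient becomes $\varepsilon_0\varepsilon_1\,t^{\delta-1}$, whose time-integral is $\varepsilon_0\varepsilon_1\,t^\delta/\delta$ and exponentiates to super-polynomial growth. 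The resolution is that the energy identity for $(a+a^{-1})\partial_\theta\gamma$ involves only $\partial_\theta\partial_a\widetilde\Psi$ and $(1+a^{-2})\partial_\theta^2\widetilde\Psi$, which decay at the integrable rates $t^{-3/2}$ and $t^{-2}$. One therefore closes the uniform bound on $\|(a+a^{-1})\partial_\theta\gamma\|$ \emph{first}, and only afterwards feeds this uniform bound into the $a\partial_a\gamma$ estimate, reducing the Gr\"onwall coefficient to $\varepsilon_0\varepsilon_1\,t^{-1}$ and yielding the claimed $t^\delta$ growth.

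Finally, the technical core of the second-derivative bounds is not the non-smooth $\max$ but the density $\bm\varrho=\partial_r{\bf m}$: controlling it requires resolving the level set $\widetilde R(\vartheta,\alpha)=r$, which the paper does via a case analysis (Lemma~\ref{LemDiracMeasureDensity}) distinguishing regions where one inverts in $\alpha$ versus in $\vartheta$ with quantitative Jacobian control, feeding into Lemma~\ref{LemDensity}.
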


This in turn allows us to investigate the asymptotic behavior. It can easily be formally deduced once one observes that, given the bounds propagated by the bootstrap, on the support of the density, one has
\begin{equation*}
\begin{split}
\vert R(\theta+ta,a)-ta\vert=o(t),
\end{split}
\end{equation*}
so that one expects that $\PPsi$ is asymptotically independent of $\theta$:
\begin{equation*}
\begin{split}
\PPsi(\theta,a,t)&\simeq\frac{1}{t}\int_\alpha \frac{1}{\max\{a,\alpha\}}\mathcal{Z}(\alpha)\, d\alpha=\frac{1}{t}{ \Phi}(a),\qquad\mathcal{Z}(\alpha):=\lim_{t\to\infty}\int\gamma^2(\theta,\alpha,t)\, d\theta.
\end{split}
\end{equation*}
As a consequence, \eqref{eq:VPPoisson} becomes a perturbation of a shear equation:
\begin{equation}\label{AsymptoticDynamicsIntro}
\partial_t\gamma=\frac{\lambda}{t}\partial_a{ \Phi}\cdot \partial_\theta\gamma.
\end{equation}
which can easily be integrated. This can be made rigorous under appropriate assumptions on the initial data and it leads to our main result.
\begin{theorem}\label{thm:mainfull}

There exists $\varepsilon_0>0$ such that any initial data $\gamma_0$ satisfying
\begin{equation}\label{BdID}
\Vert (\theta^{20}+a^{20}+a^{-20})(\gamma_0,\partial_\theta\gamma_0,\partial_a\gamma_0)\Vert_{L^2_{\theta,a}}\le\varepsilon\le\varepsilon_0
\end{equation}
leads to a unique global solution $\gamma(t)\in C^1_tL^2_{\theta,a}\cap C^0_tH^1_{\theta,a}$ of \eqref{eq:VPPoisson} with an electric field $(\partial_\theta\PPsi,\partial_a\PPsi)$ which decays in time. In addition, there exists $\gamma_\infty\in L^2_{\theta,a}\cap C^0_\theta L^2_a$ such that
\begin{equation}\label{L2Convergence}
\begin{split}
\Vert \gamma(\theta+\lambda\ln t\cdot \mathcal{E}_\infty(a),a,t)-\gamma_\infty(\theta,a)\Vert_{L^2_{\theta,a}}\lesssim\varepsilon t^{-\frac{1}{10}},
\end{split}
\end{equation}
where
\begin{equation}\label{SecondDefEF}
\mathcal{E}_\infty(a):=\frac{1}{a^2}\iint\mathfrak{1}_{\{\alpha\le a\}}\gamma^2_\infty(\vartheta,\alpha)\, d\vartheta d\alpha.
\end{equation}

\end{theorem}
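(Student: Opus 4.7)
\emph{Global existence.} The plan is first to establish local well-posedness of \eqref{eq:VPPoisson} in the weighted space defined by the norm appearing in \eqref{BdID} via a standard contraction mapping argument. This is feasible since the nonlinearity $\{\PPsi,\gamma\}$ is mild: $\PPsi$ depends on $\gamma^2$ through the smoothing averaging operator with kernel $\max\{R, R'\}^{-1}$, and the polynomial weights $\theta^{20}+a^{\pm20}$ are propagated by standard multiplier and commutator arguments. The initial hypothesis \eqref{BdID} then places the local solution inside the bootstrap regime of Proposition \ref{prop:deriv_boot} at $t=0$ with $\varepsilon_0=C\varepsilon$; the strictly improved bounds in its conclusion close the bootstrap by continuity, yielding a unique global $C^1_tL^2_{\theta,a}\cap C^0_tH^1_{\theta,a}$ solution whose bootstrap bounds hold for all $t\ge0$.

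\emph{Asymptotic form of the potential.} The key analytic input is that in action-angle variables the action $a$ is exactly the asymptotic radial velocity (from $a^2=v^2+q/r$ at $r=\infty$), whence the radial Kepler analysis gives
\[
R(\theta+ta,a)=ta+O(\ln t)
\]
uniformly on the support of $\gamma$, which is confined by the propagated moments. Inserting this into \eqref{DefPPsi} yields
\[
\PPsi(\theta,a,t)=\tfrac{1}{t}\Phi_t(a)+O(t^{-1-\eta}),\qquad \Phi_t(a):=\int\tfrac{\mathcal Z_t(\alpha)}{\max\{a,\alpha\}}\,d\alpha,\quad \mathcal Z_t(\alpha):=\int\gamma^2(\theta,\alpha,t)\,d\theta,
\]
and differentiation gives $\partial_a\PPsi=-t^{-1}\mathcal E_t(a)+O(t^{-1-\eta})$, where $\mathcal E_t(a):=a^{-2}\iint\mathfrak{1}_{\{\alpha\le a\}}\gamma^2(\vartheta,\alpha,t)\,d\vartheta d\alpha$, while $\partial_\theta\PPsi=O(t^{-2+O(\delta)})$ (since $\theta$-differentiation of the kernel does not produce the factor of $t$ that $a$-differentiation does).

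\emph{Modified scattering.} These expansions reduce \eqref{eq:VPPoisson} to a perturbed shear
\[
\partial_t\gamma=-\frac{\lambda\mathcal E_t(a)}{t}\partial_\theta\gamma+\mathcal R,\qquad \|\mathcal R(t)\|_{L^2_{\theta,a}}\lesssim\varepsilon^2 t^{-1-\eta}.
\]
Testing this against $\gamma$ and integrating in $\theta$, the leading shear drops out (being $\theta$-independent), so $\|\partial_t\mathcal Z_t\|_{L^2_a}\lesssim t^{-1-\eta}$; hence $\mathcal Z_t\to\mathcal Z_\infty$ and $\mathcal E_t\to\mathcal E_\infty$ at polynomial rate $t^{-\eta}$, with $\mathcal E_\infty$ of the form \eqref{SecondDefEF}. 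Next, I would introduce the characteristic correction
\[
h(a,t):=\int_1^t\lambda s^{-1}\mathcal E_s(a)\,ds=\lambda(\ln t)\mathcal E_\infty(a)+h_\infty(a)+o(1)
\]
and set $\tilde\gamma(\theta,a,t):=\gamma(\theta+h(a,t),a,t)$, so that $\partial_t\tilde\gamma=\mathcal R|_{*}$ is integrable in time in $L^2$. This gives a limit $\tilde\gamma_\infty\in L^2_{\theta,a}$ at rate $t^{-\eta}$; running the same argument with an extra $\partial_\theta$ yields $\partial_\theta\tilde\gamma_\infty\in L^2_{\theta,a}$, which allows the inner shift by the bounded quantity $h_\infty(a)$ and defines the profile $\gamma_\infty(\theta,a):=\tilde\gamma_\infty(\theta-h_\infty(a),a)\in C^0_\theta L^2_a$. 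The statement \eqref{L2Convergence} follows at the conservative rate $t^{-1/10}$, which absorbs the cumulative $O(\delta)$ losses from the bootstrap.

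\emph{Main obstacle.} The delicate step is making the asymptotic analysis of \eqref{DefPPsi} rigorous uniformly in $(\theta,a)$; in particular, establishing the improved rate $\partial_\theta\PPsi=O(t^{-2+O(\delta)})$ and the polynomial rate $\mathcal E_t\to\mathcal E_\infty$ both hinge on the precise Kepler expansion $R(\theta+ta,a)=ta+O(\ln t)$ combined with the strong weighted moments $a^{\pm20},\theta^{20}$ that confine the support of $\gamma$ to the region where this expansion is uniformly valid.
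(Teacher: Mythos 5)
Your proposal is correct and follows essentially the same strategy as the paper: the bootstrap propositions yield global solutions, the Kepler expansion $R(\theta+ta,a)=ta+O(\ln t)$ (with out-of-bulk contributions controlled by the propagated moments rather than a literal compact support) produces the asymptotic $\theta$-independent potential $\Psi_\infty(a)$, and a logarithmic shear correction in $\theta$ gives the modified scattering. The only minor variation from the paper is cosmetic: you correct by the full history integral $h(a,t)=\int_1^t \lambda s^{-1}\mathcal{E}_s(a)\,ds$ and then shift back by $h_\infty(a)$, whereas the paper proves convergence of the averages $\langle\tau\rangle(t)$ at rate $t^{-1/4}$ (Lemma \ref{ScatteringAverages}), passes immediately to the limiting field $\mathcal{E}_\infty(a)$, and shows directly that $\sigma(\theta,a,t):=\gamma(\theta+\lambda\ln t\cdot\mathcal{E}_\infty(a),a,t)$ has time-integrable $\partial_t\sigma$ in $L^2$ using Lemma \ref{AsymptoticEF} in the shrunken bulk $\mathcal{B}_\ast$ and moments in $\mathcal{B}_\ast^c$; both require the same ingredients and your extra shift by $h_\infty\in L^\infty_a$ is harmless once $\partial_\theta\gamma$ is bounded in $L^2_{\theta,a}$.
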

We expect that the number of moments in \eqref{BdID} can be significantly reduced. It is interesting that the decay of the electric field can be obtained under much weaker assumptions (see Lemma \ref{MomBdsLem}), but it is unclear to us how much asymptotic information can be recovered in this case.

\subsection{Organization of the paper}

In Section \ref{SecLin}, we study the ODE associated to the linearized equation and establish a number of geometric results and bounds on relevant quantities for the nonlinear problem. In Section \ref{SecNLin}, we study the nonlinear equation; we establish the moment bootstrap for weak solutions in Section \ref{SSecBootstap}, the derivative bootstrap for strong solutions in Section \ref{BootDerSSSec} and prove the modified scattering in Section \ref{ModifiedScat} by first obtaining a weak-strong limit for scattering data in Section \ref{ConvScatteringQties} and finally the convergence of the particle density in Section \ref{ConvPDF}.

We emphasize that one can obtain decay of moments for weak solutions in a self-contained way using only the results of the Sections \ref{FlowMapSSec}, \ref{EFEPSSec} and \ref{MomentSSSec}.

\section{Linearized equation}\label{SecLin}

The goal of this section is to integrate the linearized problem \eqref{eq:linVP} and to prove various estimates for the corresponding transfer functions. Lemma \ref{LinearizedProblemLem} follows easily from Lemma \ref{ExplicitActionAngleLem} below.

\subsection{Straightening the linear characteristics}\label{FlowMapSSec}

The linearization of \eqref{NonlinearHamiltonianStructure} at $\bm\mu=0$ is the Hamiltonian differential equation associated to
\begin{equation}\label{LinearHamiltonian}
\mathcal{H}_0(r,v):=v^2+\frac{q}{r},
\end{equation}
namely
\begin{equation}\label{eq:linVP}
\begin{split}
\left(\partial_t+v\partial_r+\frac{q}{2r^2}\partial_v\right)\mu=0.
\end{split}
\end{equation}
This is now a linear transport equation, which can be integrated easily once we know the trajectories of the corresponding ODE:
\begin{equation}\label{ODE}
\dot{r}=v,\qquad\dot{v}=\frac{q}{2r^2}.
\end{equation}

\subsubsection{Radial trajectories}

Since we consider a $1+1$ Hamiltonian system \eqref{ODE}, the trajectories can be integrated by phase portrait. Letting $\mathcal{A}=\sqrt{\mathcal{H}_0}$, we can explicitly integrate the resulting equation
\begin{equation*}
\dot{r}=\sqrt{\mathcal{A}^2-\frac{q}{r}}
\end{equation*}
by starting the ``clock'' $\theta=0$ at the periapsis (i.e.\ the point of closest approach): Let
\begin{equation}\label{eq:var_def}
\begin{split}
\mathcal{A}(r,v)&=\sqrt{v^2+\frac{q}{r}},\qquad\Theta(r,v)=\frac{v}{\vert v\vert}r_{min}G(\frac{r}{r_{min}}),\qquad r_{min}(r,v)=\frac{q}{v^2+\frac{q}{r}},\\
R(\theta,a)&=r_{min}H(\frac{\vert \theta\vert}{r_{min}}),\qquad V(\theta,a)=\frac{\theta}{\vert\theta\vert}\sqrt{a^2-\frac{q}{R}},\qquad r_{min}(a)=\frac{q}{a^2},
\end{split}
\end{equation}
where $G:(1,\infty)\to\mathbb{R}$ and $H:\mathbb{R}_+\to(1,\infty)$ satisfy
\begin{equation}\label{eq:GH_def}
\begin{split}
G(1)&=0,\quad G^\prime(s)=\left[1-\frac{1}{s}\right]^{-\frac{1}{2}},\qquad G(s)=\sqrt{s(s-1)}+\ln\left(\sqrt{s}+\sqrt{s-1}\right),\\
H(x)&=G^{-1}(x),\quad H^\prime(x)=\sqrt{\frac{H(x)-1}{H(x)}}.
\end{split}
\end{equation}
These functions and related ones are studied in more details in Section \ref{StructureFunctionsSSSec}. We can now solve the linear problem \eqref{eq:linVP} via a canonical change of variable:
\begin{lemma}\label{ExplicitActionAngleLem}
 The change of variables
 \begin{equation}
 (r,v)\mapsto(\Theta(r,v),\mathcal{A}(r,v))
 \end{equation}
 in \eqref{eq:var_def} defines a canonical diffeomorphism of the phase space $(r,v)$ (with inverse $(R(\theta,a),V(\theta,a))$ as in \eqref{eq:var_def}), which linearizes the flow in the sense that for the flow map $\Phi^t(r,v)$ associated to the Hamiltonian ODEs \eqref{ODE} we have
 \begin{equation}\label{IntegralOfLinearMotion}
 \Theta(\Phi^t(r,v))-\Theta(r,v)=t\mathcal{A}(r,v),\qquad\mathcal{A}(\Phi^t(r,v))=\mathcal{A}(r,v).
 \end{equation}
 Moreover, we have
 \begin{equation*}
  \det \frac{\partial(\Theta,\mathcal{A})}{\partial(r,v)}=\det\frac{\partial(R,V)}{\partial(\theta,a)}=1.
 \end{equation*}
 \end{lemma}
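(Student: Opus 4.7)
The plan is to verify the three claims of the lemma in sequence: (i) $\mathcal{A}$ is a constant of motion of \eqref{ODE} and $\Theta$ grows linearly with rate $\mathcal{A}$ along trajectories, (ii) the map $(r,v) \mapsto (\Theta, \mathcal{A})$ is a smooth diffeomorphism onto $\mathbb{R} \times \mathbb{R}_+^\ast$ with inverse $(R, V)$, and (iii) the Jacobian determinant is $1$. The main obstacle I anticipate is smoothness at the periapsis $v = 0$, where both the prefactor $v/|v|$ and $G'(r/r_{min})$ are individually singular; the whole scheme hinges on exhibiting an expression of $\Theta$ in which these singularities cancel jointly. Once this is done, (iii) will follow from (i) essentially for free.

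For the linearization, conservation of $\mathcal{A}^2 = v^2 + q/r$ along \eqref{ODE} is a one-line check since it is the Hamiltonian. For $\Theta = (v/|v|)\, r_{min}\, G(r/r_{min})$, I would exploit the fact that $r_{min} = q/(v^2 + q/r)$ depends only on $\mathcal{A}$, so $\dot r_{min} = 0$ along trajectories, and consequently $s := r/r_{min}$ satisfies $\dot s = v/r_{min}$. Differentiation then gives
\[
\dot\Theta = \frac{v}{|v|}\, r_{min}\, G'(s) \cdot \frac{v}{r_{min}} = \frac{v}{|v|}\, G'(s)\, v.
\]
Using $s - 1 = rv^2/q$ and $G'(s) = (1-1/s)^{-1/2} = \sqrt{(q+rv^2)/(rv^2)}$, this simplifies to $\sqrt{v^2 + q/r} = \mathcal{A}$, and integrating $\dot\Theta = \mathcal{A}$, $\dot{\mathcal{A}} = 0$ gives \eqref{IntegralOfLinearMotion}.

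For smoothness and inversion, the substitution $\tau = 1 + w^2$ in the integral defining $G$ yields $G(1+u) = 2\int_0^{\sqrt{u}} \sqrt{1+w^2}\, dw$, and since $r/r_{min} - 1 = rv^2/q$ this rewrites the formula from \eqref{eq:var_def} as
\[
\Theta(r,v) = 2\, r_{min}(r,v)\, F\!\left(v\sqrt{r/q}\right), \qquad F(x) := \int_0^x \sqrt{1+w^2}\, dw,
\]
with $F$ smooth and odd, so $(\Theta, \mathcal{A})$ is smooth on $\mathbb{R}_+^\ast \times \mathbb{R}$. For the inverse, note that $G' > 0$ on $(1,\infty)$ makes $G:(1,\infty) \to \mathbb{R}_+$ a bijection, so $H = G^{-1}$ is well defined and smooth, and direct substitution of $(R(\theta,a), V(\theta,a))$ from \eqref{eq:var_def} into the definitions of $\mathcal{A}$ and $\Theta$ recovers $(\theta, a)$. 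Finally, the unit Jacobian is deduced from the linearization: using $\partial_v\mathcal{A} = v/\mathcal{A}$ and $\partial_r\mathcal{A} = -q/(2r^2\mathcal{A})$, one gets
\[
\det \frac{\partial(\Theta, \mathcal{A})}{\partial(r,v)} = \partial_r\Theta\,\partial_v\mathcal{A} - \partial_v\Theta\,\partial_r\mathcal{A} = \frac{1}{\mathcal{A}}\!\left(v\,\partial_r\Theta + \frac{q}{2r^2}\partial_v\Theta\right) = \frac{\dot\Theta}{\mathcal{A}} = 1,
\]
since the differential operator in parentheses is the generator of \eqref{ODE}; the determinant of the inverse Jacobian is then also $1$.
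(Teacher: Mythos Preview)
Your proof is correct and somewhat more streamlined than the paper's. The paper computes $\dot\Theta$ by fully expanding the chain rule into three terms --- the main one giving $\mathcal{A}$, a formal $\delta(v)$ contribution vanishing because $G(1)=0$, and a term multiplying $v\partial_r r_{min} + \tfrac{q}{2r^2}\partial_v r_{min}$ shown to vanish by direct computation --- whereas you bypass the latter two by invoking $\dot r_{min}=0$ along the flow. Your separate smoothness argument at $v=0$ via the rewriting $\Theta = 2\,r_{min}\,F(v\sqrt{r/q})$ with $F$ odd is not in the paper and makes regularity across the periapsis explicit rather than implicit in the vanishing of the $\delta(v)$ term. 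For the Jacobian, the paper verifies $d\Theta\wedge d\mathcal{A}=dr\wedge dv$ by essentially the same three-term expansion as for $\dot\Theta$, while your identity $\det\partial(\Theta,\mathcal{A})/\partial(r,v)=\dot\Theta/\mathcal{A}$ recycles the flow computation instead of repeating it. Both routes are short; yours packages the cancellations more conceptually, the paper's stays at the level of explicit formulas.
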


\begin{proof}
That $(\Theta,\mathcal{A})$ and $(R,V)$ are inverse can be checked directly once one observes that $r_{min}$ is consistent: $r_{min}(\mathcal{A}(r,v))=r_{min}(r,v)$ and $r_{min}(a)=r_{min}(R(a,\theta),V(a,\theta))$. It is direct to check that $\mathcal{A}$ is conserved along the flow. Moreover,
\begin{equation*}
\begin{aligned}
\frac{d}{dt}\Theta(r(t),v(t))&=\vert v\vert\left(1-\frac{r_{min}}{r}\right)^{-\frac{1}{2}}+\frac{q}{2r^2}\delta(v)r_{min}G(\frac{r}{r_{min}})\\
&\qquad +\frac{v}{\vert v\vert}\left[v\partial_rr_{min}+\frac{q}{2r^2}\partial_vr_{min}\right](G-\frac{r}{r_{min}}G^\prime)(\frac{r}{r_{min}}).
\end{aligned}
\end{equation*}
The first term on the right hand side gives $\mathcal{A}$, and the second vanishes since when $v=0$, $G(r/r_{min})=G(1)=0$, while direct computations show that the bracket in the last term vanishes. In addition, the same computations show that
\begin{equation*}
\begin{split}
d\Theta\wedge d\mathcal{A}&=\frac{q}{\mathcal{A}r^2}r_{min}G(\frac{r}{r_{min}})\delta(v)dr\wedge dv+\frac{v}{\vert v\vert\mathcal{A}}(G-\frac{r}{r_{min}}G^\prime)(\frac{r}{r_{min}})\cdot \left(v\partial_rr_{min}+\frac{q}{2r^2}\partial_vr_{min}\right)dr\wedge dv\\
&\quad+\frac{\vert v\vert}{\mathcal{A}}G^\prime(\frac{r}{r_{min}})dr\wedge dv\\
&=dr\wedge dv,
\end{split}
\end{equation*}
which shows that the transformation preserves the symplectic form and hence has Jacobian $1$.
\end{proof}

\subsubsection{Study of the structure functions}\label{StructureFunctionsSSSec}

In this subsection, we study the geometric functions that arise from the change of variable in Lemma \ref{ExplicitActionAngleLem}. These are independent of assumptions on the solutions.

\begin{figure}[h]
 \centering
 \includegraphics[width=0.6\textwidth]{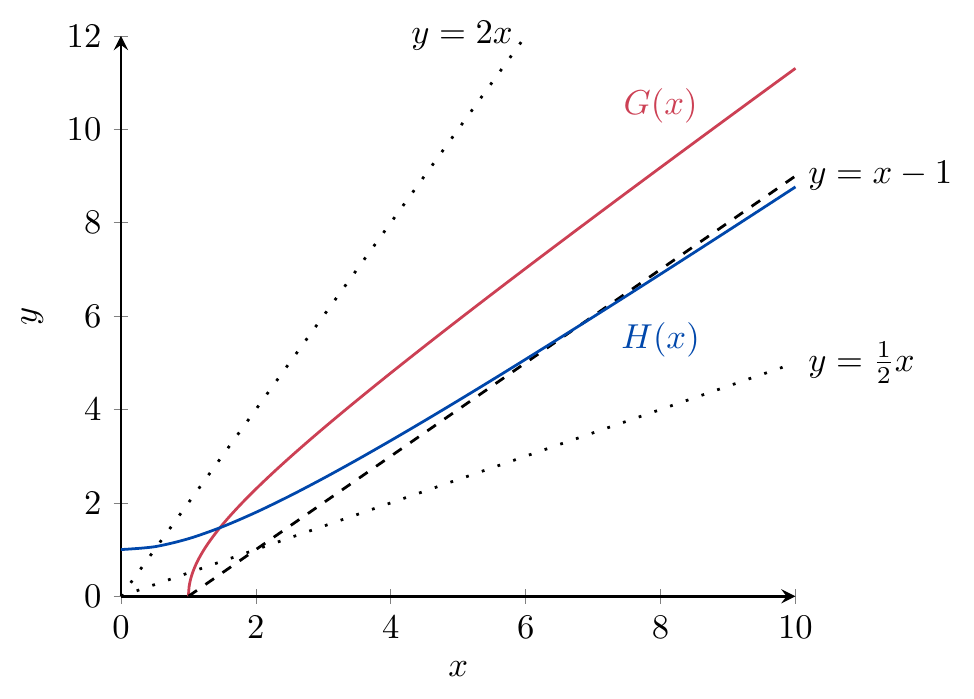}
 \caption{Plots of the functions $G$ and $H$ of Lemma \ref{BehaviorGH}.}
 \label{fig:GH}
\end{figure}

\begin{lemma}\label{BehaviorGH}
The functions $G$ and $H$ are almost linear
\begin{equation}\label{GHAlmostlinear}
\begin{split}
x-1\le& G(x)\le x+\ln\left(2\sqrt{x}\right)\le 2x,\qquad 1\le x<\infty,\\
\frac{x}{2}\le& H(x)\le x+1,\qquad 0\le x<\infty.
\end{split}
\end{equation}
In addition, we note the asymptotic behavior of $G$ and its inverse
 \begin{equation}\label{EstimG}
 \begin{split}
 G(s)&=s+\frac{1}{2}\ln s+\ln 2-\frac{1}{2}+O_{s\to\infty}(\frac{1}{s}),\qquad G(1+\hbar)=2\sqrt{\hbar}+O_{\hbar\to0}(\hbar^{3/2}),\\
 H(x)&=x-\frac{1}{2}\ln(x)-\ln 2+\frac{1}{2}+\frac{1}{4}\frac{\ln(x)}{x}+O_{x\to\infty}(\frac{1}{x}),\qquad H(h)=1+\left(\frac{h}{2}\right)^2+O_{h\to0}\left(\frac{h}{2}\right)^5.
 \end{split}
 \end{equation} 
\end{lemma}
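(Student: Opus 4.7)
The plan is to prove everything first about $G$ using its explicit closed form in \eqref{eq:GH_def}, and then to deduce all statements about $H$ by function inversion. No deep tools are needed; at each step it is elementary calculus, and the work is organized so that monotonicity of $G$ converts bounds on $G$ into bounds on $H$ automatically.

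\textbf{Almost-linear bounds.} For the upper bound on $G(s)$ I would handle the two summands separately: $\sqrt{s(s-1)} \le s$ since $s-1 \le s$, and $\sqrt{s-1} \le \sqrt s$ gives $\ln(\sqrt s + \sqrt{s-1}) \le \ln(2\sqrt s)$, producing the middle estimate in \eqref{GHAlmostlinear}. The final inequality $\ln(2\sqrt x) \le x$ for $x \ge 1$ follows by checking at $x=1$ (where $\ln 2 < 1$) and differentiating. For the lower bound, $s(s-1) \ge (s-1)^2$ on $[1,\infty)$ yields $\sqrt{s(s-1)} \ge s-1$, while the logarithm is nonnegative. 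The bounds on $H$ are now immediate from monotonicity of $G^{-1}$: writing $y = G(x)$ with $x\ge1$, the inequality $x-1 \le G(x) \le 2x$ rearranges to $y/2 \le x = H(y) \le y+1$.

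\textbf{Asymptotic expansions of $G$.} For $s\to\infty$ I would Taylor-expand each summand: $\sqrt{s(s-1)} = s\sqrt{1-1/s} = s - \tfrac12 - \tfrac{1}{8s} + O(s^{-2})$, and $\sqrt s + \sqrt{s-1} = 2\sqrt s\bigl[1 - \tfrac{1}{4s} + O(s^{-2})\bigr]$, so $\ln(\sqrt s + \sqrt{s-1}) = \ln 2 + \tfrac12 \ln s - \tfrac{1}{4s} + O(s^{-2})$; summing delivers the first formula in \eqref{EstimG}. Near $s=1$, I would rewrite the primitive as $G(1+\hbar) = \int_0^\hbar \sqrt{(1+u)/u}\,du$ and make the change of variables $u = t^2$, which converts the integral to $2\int_0^{\sqrt\hbar}\sqrt{1+t^2}\,dt$. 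Term-by-term integration of the Taylor series for $\sqrt{1+t^2}$ then yields $2\sqrt\hbar + \tfrac13 \hbar^{3/2} + O(\hbar^{5/2})$, which gives the second formula.

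\textbf{Asymptotic expansions of $H$.} These are obtained by inverting the expansions of $G$. For $x\to\infty$, set $y = H(x)$; the implicit equation $y + \tfrac12 \ln y + \ln 2 - \tfrac12 + O(1/y) = x$ gives $y = x + O(\ln x)$ at first order. Reinserting, $\ln y = \ln x + \ln(y/x) = \ln x - (\ln x)/(2x) + O(1/x)$, and plugging this back produces $H(x) = x - \tfrac12 \ln x - \ln 2 + \tfrac12 + \frac{\ln x}{4x} + O(1/x)$ as claimed. For $h\to 0$, I would invert $h = 2\sqrt\eta + O(\eta^{3/2})$ by writing $\sqrt\eta = h/2 + O(\eta) = h/2 + O(h^2)$, hence $\eta = (h/2)^2 + O(h^4)$, which gives the expansion of $H$ near $0$.

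\textbf{Expected difficulty.} None of this is hard; the only nuisance is the bootstrap inversion for $H(x)$ at infinity, where to capture the logarithmic correction $(\ln x)/(4x)$ one must track the $\ln(y/x)$ term to the correct order. Once the iteration is set up carefully the bookkeeping is routine, and the rest of the lemma requires only one-line computations.
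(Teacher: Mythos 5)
Your approach is the same elementary, direct computation the paper's one-line proof gestures at, and every step is correct except for one piece of bookkeeping. All the bounds in \eqref{GHAlmostlinear}, the large-$s$ expansion of $G$, the clean substitution $u=t^2$ for $G$ near $1$, and the iterative inversion to get $H(x)$ for $x\to\infty$ (including the $(\ln x)/(4x)$ correction) are carried out correctly.

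The one place that needs attention is the expansion of $H$ near $0$. From $h = 2\sqrt{\eta} + O(\eta^{3/2})$ the correct deduction is $\sqrt{\eta} = \tfrac{h}{2} + O(\eta^{3/2}) = \tfrac{h}{2} + O(h^3)$, which squares to $\eta = (h/2)^2 + O(h^4)$. You instead write $\sqrt{\eta} = h/2 + O(\eta) = h/2 + O(h^2)$, which after squaring only yields $\eta = (h/2)^2 + O(h^3)$, so your stated conclusion $\eta = (h/2)^2 + O(h^4)$ does not actually follow from the intermediate bound you wrote; keep the $O(\eta^{3/2})$ accuracy and the $O(h^4)$ is fine. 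Separately, it is worth noting that neither your $O(h^4)$ nor any sharper argument will reach the paper's stated $O((h/2)^5)$, because that exponent is a small error in the lemma: carrying your integral one more term gives $G(1+\hbar) = 2\sqrt{\hbar} + \tfrac13 \hbar^{3/2} - \tfrac{1}{20}\hbar^{5/2} + \cdots$, whose inversion is
\begin{equation*}
H(h) = 1 + \left(\tfrac{h}{2}\right)^2 - \tfrac13\left(\tfrac{h}{2}\right)^4 + O\!\left(\left(\tfrac{h}{2}\right)^6\right),
\end{equation*}
so the remainder is genuinely of order $(h/2)^4$, not $(h/2)^5$. Your computation is therefore closer to the truth than the lemma's statement; the correct exponent in \eqref{EstimG} should read $O_{h\to0}\bigl((h/2)^4\bigr)$.
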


\begin{proof}[Proof of Lemma \ref{BehaviorGH}]
Since $G$ can be integrated explicitly, we easily obtain the first line in \eqref{GHAlmostlinear} from \eqref{eq:GH_def}, and the second line follows from the fact that $H=G^{-1}$. Now \eqref{EstimG} follows by expanding the expression for $G$.
\end{proof}

In addition, we will frequently consider first and second order derivatives.
\begin{lemma}\label{ControlLLemma}
We have explicit formulas for the first order derivatives
\begin{equation*}
\begin{split}
\partial_\theta R(\theta,a)&=\frac{\theta}{\vert\theta\vert}H^\prime(\frac{a^2\vert\theta\vert}{q}),\qquad 1-\frac{q}{a^2R(\theta,a)}\le\frac{\theta}{\vert\theta\vert}\partial_\theta R(\theta,a)\le 1,\\
\partial_aR(\theta,a)&=\frac{q}{a^3}\left(\int_{s=0}^{\frac{a^2\vert\theta\vert}{q}}\frac{s}{H^2(s)}ds-2\right),\qquad\vert\partial_aR\vert\lesssim \frac{q\ln\langle\frac{a^2}{q}R(\theta,a)\rangle}{a^3},
\end{split}
\end{equation*}
and the formulas for the second order derivatives
\begin{equation*}
\begin{split}
\partial_\theta\partial_\theta R(\theta,a)&=\frac{q}{2a^2R^2(\theta,a)},\qquad
\partial_\theta\partial_aR(\theta,a)=\frac{1}{a}\frac{\theta}{\vert\theta\vert}\frac{\frac{a^2\vert\theta\vert}{q}}{H^2(\frac{a^2\vert\theta\vert}{q})},\\
\partial_a\partial_aR(\theta,a)&=\frac{q}{a^4}\left(6-3\int_{s=0}^{\frac{a^2\vert\theta\vert}{q}}\frac{s}{H^2(s)}ds+\frac{2\left(\frac{a^2\vert\theta\vert}{q}\right)^2}{H^2(\frac{a^2\vert\theta\vert}{q})}\right),
\end{split}
\end{equation*}
so that
\begin{equation*}
\begin{split}
\vert\partial_\theta\partial_\theta R(\theta,a)\vert&\le\frac{q}{2a^2R^2(\theta,a)},\qquad \vert\partial_\theta\partial_aR(\theta,a)\vert\le\frac{1}{a}\frac{\frac{a^2\vert\theta\vert}{q}}{H^2(\frac{a^2\vert\theta\vert}{q})},\qquad
\vert\partial_a\partial_aR(\theta,a)\vert\le \frac{q}{a^4}\ln\langle \frac{a^2}{q}R(\theta,a)\rangle.
\end{split}
\end{equation*}

\end{lemma}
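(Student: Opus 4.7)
The whole lemma reduces to a direct computation once we rewrite $R$ in the compact form
\[
R(\theta,a)=\frac{q}{a^2}\,H(u),\qquad u:=u(\theta,a)=\frac{a^2\vert\theta\vert}{q},
\]
and record the single auxiliary identity
\begin{equation*}
H''(u)=\frac{1}{2H^2(u)},
\end{equation*}
which I would establish first by differentiating $H'(x)=\sqrt{1-1/H(x)}$ in \eqref{eq:GH_def} and simplifying with $H'\cdot\sqrt{1-1/H}=(H')^2=1-1/H$. I would also note the inversion identity $H(u(\theta,a))=a^2R(\theta,a)/q$ coming from the definitions, which is what converts expressions in $u$ back into expressions in $R$.

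Next I would compute the three first-order quantities by the chain rule. For $\partial_\theta R$, since $\partial_\theta u=(a^2/q)\theta/\vert\theta\vert$, we get $\partial_\theta R=(\theta/\vert\theta\vert)H'(u)$; the stated double-sided bound then follows from $H'=\sqrt{1-1/H}=\sqrt{1-q/(a^2R)}$ combined with the elementary inequality $x\le\sqrt{x}\le 1$ for $x\in[0,1]$. For $\partial_a R$, the chain rule yields
\[
\partial_a R=-\frac{2q}{a^3}H(u)+\frac{2\vert\theta\vert}{a}H'(u)=\frac{q}{a^3}\bigl(-2H(u)+2uH'(u)\bigr).
\]
To obtain the integral form, observe that $\frac{d}{du}\bigl(-2H(u)+2uH'(u)\bigr)=2uH''(u)=u/H^2(u)$ by the identity above, and that the bracket equals $-2$ at $u=0$ because $H(0)=1$ and $H'(0)=0$ (from \eqref{eq:GH_def}). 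Integrating gives the claimed expression. For the bound on $\vert\partial_aR\vert$, I would plug in $H(s)\ge\max(1,s/2)$ from \eqref{GHAlmostlinear}, yielding $\int_0^u s/H^2(s)\,ds\lesssim\ln\langle u\rangle$; using $u=a^2\vert\theta\vert/q$ together with $H(u)=a^2R/q\asymp u+1$ converts $\ln\langle u\rangle$ into $\ln\langle a^2R/q\rangle$, as required.

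For the second derivatives I would differentiate the first-order formulas again. The computation of $\partial_\theta^2R$ gives $(a^2/q)H''(u)=(a^2/q)/(2H^2(u))$, and one final substitution $H(u)=a^2R/q$ yields $q/(2a^2R^2)$. For $\partial_\theta\partial_aR$, differentiating $(\theta/\vert\theta\vert)H'(u)$ in $a$ and using $\partial_au=2a\vert\theta\vert/q$ produces $(\theta/\vert\theta\vert)(2a\vert\theta\vert/q)H''(u)=(\theta/\vert\theta\vert)(u/a)/H^2(u)$. Finally, for $\partial_a^2R$ I would differentiate the integral representation of $\partial_aR$ in $a$: the prefactor $q/a^3$ contributes $-3q/a^4$ times the bracket (which expands to $-3\int s/H^2ds+6$), while differentiating the integral in its upper limit contributes $(q/a^3)(u/H^2(u))\cdot\partial_au=2qu^2/(a^4H^2(u))$. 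Collecting gives the stated formula, and the bound $\vert\partial_a^2R\vert\lesssim q\ln\langle a^2R/q\rangle/a^4$ follows from the same logarithmic control on $\int_0^u s/H^2(s)\,ds$ together with $u^2/H^2(u)\lesssim 1$ for all $u\ge0$, since $H(u)\ge\max(1,u/2)\ge u/2$ makes $u/H(u)$ uniformly bounded.

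There is no real obstacle here beyond keeping track of sign conventions introduced by the $\theta/\vert\theta\vert$ factor (harmless because $R$ is even in $\theta$ and the first $\theta$-derivative produces exactly one such sign), and the one insight needed is the identity $H''=1/(2H^2)$, which is what makes the integral representation for $\partial_aR$ clean and makes all second derivatives algebraic in $H$.
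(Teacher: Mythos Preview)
Your proposal is correct and follows essentially the same approach as the paper's proof, which simply records the identities $H(0)=1$, $H'=\sqrt{1-1/H}$, $H''=1/(2H^2)$ and invokes Lemma~\ref{BehaviorGH} for the size of $H$, leaving the chain-rule computations implicit. Your write-up spells out those computations in full and is in fact more detailed than the paper's; the key device---the identity $H''=1/(2H^2)$ turning the $a$-derivative into a clean integral and making all second derivatives algebraic in $H$---is exactly what the paper relies on.
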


\begin{proof}[Proof of Lemma \ref{ControlLLemma}]

The formulas follow from direct calculations using that
\begin{equation*}\label{DiffEqH}
H(0)=1,\qquad 1\ge H^\prime(x)=\left[1-1/H\right]^{\frac{1}{2}}\ge1-H^{-1},\qquad H^{\prime\prime}(x)=1/(2H^2(x)),
\end{equation*}
together with Lemma \ref{BehaviorGH} to control the behavior of $H$.

\end{proof}

\subsubsection{Kinematics of linear trajectories}

Here we collect a few estimates on the behavior of the trajectories of \eqref{ODE}. Using \eqref{IntegralOfLinearMotion}, we see that the linearized flow is simple in the action-angle variables. For simplicity of notation, given a function $F(\theta,a)$ in phase space, we will denote by
\begin{equation}\label{LinEvolFunc}
\widetilde{F}(\theta,a):=F(\theta+ta,a)
\end{equation}
its evolution under the linear flow. This is a slight abuse of notation since the transformation depends on time; however all our estimates will be instantaneous so this should not lead to confusion. Since we will show that in action-angle variables, the new density is (almost) stable, we expect that the main role will be played by trajectories starting from the ``bulk region'' $\mathcal{B}$ defined by
\begin{equation}\label{DefBulkZone}
\begin{split}
\mathcal{B}&:=\{a\ge t^{-\frac{1}{4}},\,\, \vert\theta\vert\le ta/2\},\qquad\mathcal{B}^c=\{a\le t^{-\frac{1}{4}}\,\,\hbox{ or }\,\,\vert\theta\vert\ge ta/2\}.
\end{split}
\end{equation}

We start with a few simple observations. By definition, we have a universal lower bound for $R$,
\begin{equation}\label{EstimRa}
\begin{split}
a^2R(\theta,a)\ge q,
\end{split}
\end{equation}
but in the bulk region, one can be more precise.

\begin{lemma}\label{DerRLem}

We have the following control on $\widetilde{R}$:
\begin{equation}\label{BdRTFirstDer}
\begin{split}
\vert\partial_\theta\widetilde{R}(\theta,a)\vert\le 1,\qquad\vert\partial_a\widetilde{R}(\theta,a)\vert\lesssim t+\frac{q}{a^3}\ln\langle\frac{a^2}{q}\widetilde{R}(\theta,a)\rangle
\end{split}
\end{equation}
and
\begin{equation}\label{BdRTSecondDer}
\begin{split}
\vert\partial_\theta\partial_\theta\widetilde{R}(\theta,a)\vert&\lesssim \widetilde{R}^{-1}(\theta,a),\qquad
\vert\partial_\theta\partial_a\widetilde{R}(\theta,a)\vert\lesssim \frac{t+a^{-3}}{\widetilde{R}(\theta,a)},\\
\vert\partial_a\partial_a\widetilde{R}(\theta,a)\vert&\lesssim \frac{t^2}{a^2\widetilde{R}^2(\theta,a)}+\frac{t}{a^3\widetilde{R}(\theta,a)}+\frac{q}{a^4}\ln\langle \frac{a^2}{q}\widetilde{R}(\theta,a)\rangle.
\end{split}
\end{equation}

In addition, we have a more precise control in the bulk: when $(\theta,a)\in\mathcal{B}$, we have that
\begin{equation*}
\begin{split}
ta/2\le \widetilde{R}(\theta,a)\le 2ta,\qquad 1-\frac{2q}{ta^3}\le\partial_\theta\widetilde{R}(\theta,a)\le 1,\qquad  \partial_a\widetilde{R}(\theta,a)\ge\frac{3}{4}t,\qquad\vert\partial_a\partial_a\widetilde{R}\vert\lesssim a^{-4}\ln\langle ta^3\rangle,
\end{split}
\end{equation*}
and in particular, the change of variable $a\mapsto \widetilde{R}(\theta,a)$ is well behaved.

\end{lemma}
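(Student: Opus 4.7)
The plan is to reduce everything to Lemma \ref{ControlLLemma} via the chain rule applied to the time-dependent substitution $\theta \mapsto \theta + ta$, and then rewrite the resulting expressions, which are naturally formulated at the point $(\theta+ta,a)$, in terms of $\widetilde{R}(\theta,a) = R(\theta+ta,a)$ and of $\theta+ta$.

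First I would compute
\begin{equation*}
\begin{aligned}
\partial_\theta \widetilde{R} &= (\partial_\theta R)(\theta+ta,a), \qquad \partial_a \widetilde{R} = t(\partial_\theta R)(\theta+ta,a) + (\partial_a R)(\theta+ta,a),\\
\partial_\theta^2 \widetilde{R} &= (\partial_\theta^2 R)(\theta+ta,a), \qquad \partial_\theta\partial_a\widetilde{R} = t(\partial_\theta^2 R) + (\partial_\theta\partial_a R),\\
\partial_a^2 \widetilde{R} &= t^2(\partial_\theta^2 R) + 2t(\partial_\theta\partial_a R) + (\partial_a^2 R),
\end{aligned}
\end{equation*}
all evaluated at $(\theta+ta,a)$. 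Inserting the bounds of Lemma \ref{ControlLLemma} gives the first-order estimates in \eqref{BdRTFirstDer} immediately. For the second-order estimates one uses the universal lower bound \eqref{EstimRa}, $a^2\widetilde{R}\ge q$, which turns $q/(a^2\widetilde{R}^2)$ into $\widetilde{R}^{-1}$ times a bounded quantity. To handle the mixed term $\frac{1}{a}\frac{a^2|\theta+ta|/q}{H^2(a^2|\theta+ta|/q)}$, I would use the identity $H(a^2|\theta+ta|/q) = a^2\widetilde{R}/q$ (from the definition of $R$) together with the linear bound $G(s)\le 2s$ from Lemma \ref{BehaviorGH}, which yields $|\theta+ta| \le 2\widetilde{R}$, reducing the term to $\lesssim q/(a^3\widetilde{R}) \lesssim a^{-3}/\widetilde{R}$. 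This suffices for the bounds in \eqref{BdRTSecondDer}; the combination $t^2(\partial_\theta^2R)$ contributes the $t^2/(a^2\widetilde{R}^2)$ term and $2t(\partial_\theta\partial_aR)$ gives the $t/(a^3\widetilde{R})$ term.

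For the sharper bulk estimates on $\mathcal{B}$, the starting point is that $\theta+ta$ has a definite sign and satisfies $|\theta+ta|\in[ta/2, 3ta/2]$, while $a^3 t \ge t^{1/4}$ is large. Combining the bounds $s/2\le H(s)\le s+1$ from \eqref{GHAlmostlinear} with $\widetilde{R} = \tfrac{q}{a^2}H(a^2|\theta+ta|/q)$ yields the two-sided bound $ta/2 \le \widetilde{R}\le 2ta$ once $t$ is large enough that $q/a^2 \ll ta$ (which holds in the bulk since $q/a^2 \le q \cdot t^{-1/4}\cdot(ta^3)^{-1}\cdot t a \ll ta$). The bound $\partial_\theta\widetilde{R} = H'(a^2|\theta+ta|/q)$ together with $H'(x) = \sqrt{1-1/H(x)} \in [1-1/H(x),1]$ and $H(a^2|\theta+ta|/q) = a^2\widetilde{R}/q \ge ta^3/(2q)$ gives the two-sided bound on $\partial_\theta\widetilde{R}$. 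For the lower bound on $\partial_a\widetilde{R}$, write $\partial_a\widetilde{R} = t\partial_\theta R + \partial_aR$ and use $t\partial_\theta R \ge t - 2q/a^3$ together with $|\partial_a R|\lesssim q\ln\langle a^2\widetilde{R}/q\rangle/a^3\lesssim q\ln\langle ta^3\rangle/a^3$, both of which are $o(t)$ in the bulk since $ta^3\ge t^{1/4}$, yielding $\partial_a\widetilde{R}\ge 3t/4$ for $t$ large. The bound on $\partial_a^2\widetilde{R}$ comes by noting that in the bulk $t \lesssim \widetilde{R}/a$, so the first two terms $t^2/(a^2\widetilde{R}^2)$ and $t/(a^3\widetilde{R})$ in \eqref{BdRTSecondDer} are absorbed into $a^{-4}\ln\langle ta^3\rangle$, leaving only the log term.

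The main technical obstacle is bookkeeping the many error terms involving $q$, $a$, and $\ln\langle a^2\widetilde{R}/q\rangle$ so that they are sharp enough to both establish the general bounds \eqref{BdRTSecondDer} and, when specialized to the bulk, be absorbed into the leading asymptotic behavior; everything else is a chain-rule computation combined with the structural identities of Lemma \ref{ControlLLemma} and the elementary bounds on $H$ in Lemma \ref{BehaviorGH}. Finally, the invertibility of $a\mapsto \widetilde{R}(\theta,a)$ in the bulk follows from $\partial_a\widetilde{R}\ge 3t/4 > 0$.
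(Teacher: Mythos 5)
Your proposal is correct and follows essentially the same route as the paper's (terse) proof: apply the chain rule to $\theta\mapsto\theta+ta$, feed in the pointwise estimates of Lemma \ref{ControlLLemma}, use the universal bound $a^2\widetilde{R}\ge q$ for the second-order estimates, and use $\vert\theta+ta\vert\sim ta$ together with $ta^3\gg 1$ in the bulk. One minor arithmetic slip: the parenthetical chain $q/a^2\le q\, t^{-1/4}(ta^3)^{-1}\,ta$ simplifies to $q\,t^{-1/4}/a^2$, which does not dominate $q/a^2$; the correct (and sufficient) estimate is simply $q/a^2\le q\,t^{1/2}\ll ta$ in the bulk, using $a\ge t^{-1/4}$.
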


\begin{proof}[Proof of Lemma \ref{DerRLem}]

The bounds \eqref{BdRTFirstDer} and \eqref{BdRTSecondDer} follow from Lemma \ref{ControlLLemma} and the formulas
\begin{equation*}
\begin{split}
\partial_\theta\widetilde{R}(\theta,a)=\partial_\theta R(\theta+ta,a),\qquad \partial_a\widetilde{R}(\theta,a)=\left[t\partial_\theta R+\partial_aR\right](\theta+ta,a),\\
\partial_\theta\partial_\theta\widetilde{R}(\theta,a)=\partial_\theta\partial_\theta R(\theta+ta,a),\qquad\partial_\theta\partial_a\widetilde{R}=\left(t\partial_\theta\partial_\theta R+\partial_a\partial_\theta R\right)(\theta+ta,a),\\
\partial_a\partial_a\widetilde{R}(\theta,a)=\left(t^2\partial_\theta\partial_\theta R+2t\partial_a\partial_\theta R+\partial_a\partial_aR\right)(\theta+ta,a).
\end{split}
\end{equation*}
Now, in the bulk, we observe that $ta/2\le \vert\theta+ta\vert\le 2ta$, and $ta^3\gg1$ so that by \eqref{GHAlmostlinear} we have $ta/2\le \widetilde{R}(\theta+ta,a)\le 2ta$. The other bounds follow directly.

\end{proof}

Since the interaction involves quantities defined in the physical space, it will be useful to understand how to relate them to phase space variables. The next lemma is concerned with solutions of the equation
\begin{equation}\label{InvertR}
\begin{split}
\widetilde{R}(\theta,a)=r
\end{split}
\end{equation}
for fixed $t$ and $r$.
\begin{figure}[h]
 \centering
 \includegraphics[width=0.7\textwidth]{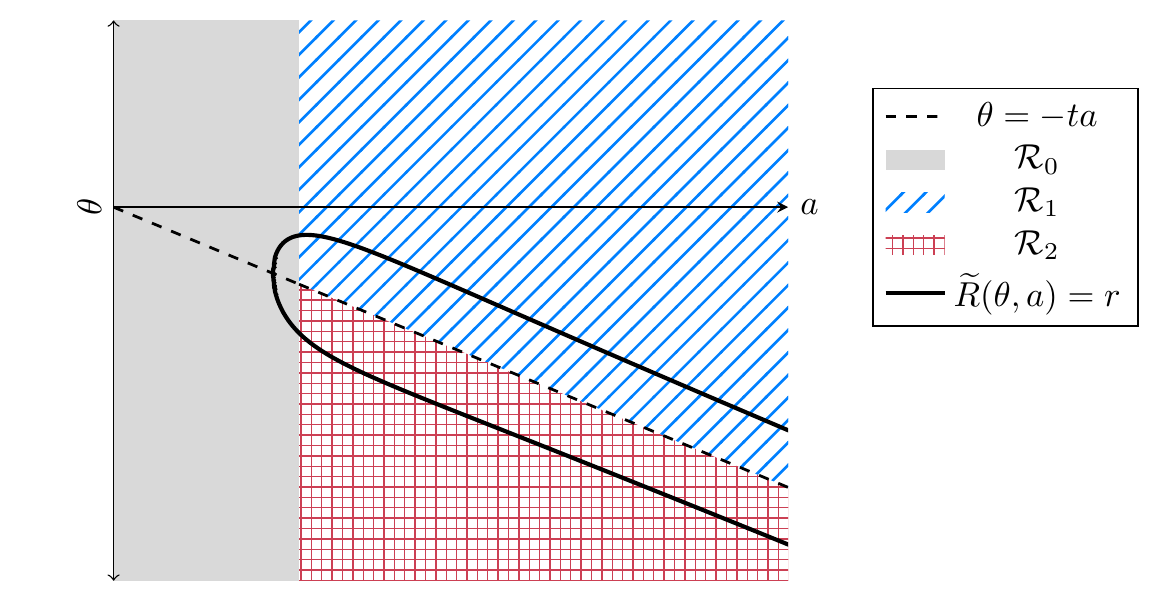}
 \caption{The different regions of Lemma \ref{LemDiracMeasureDensity}.}
 \label{fig:areas}
\end{figure}

\begin{lemma}\label{LemDiracMeasureDensity}

Let
\begin{equation*}
\begin{split}
A:=\sqrt{\frac{q}{r}}(1+\hbar),\qquad \hbar:=c\cdot \min\{1,\frac{r^3}{q^5t^2}\}.
\end{split}
\end{equation*}
for some fixed small constant $c>0$ and define the regions
\begin{equation*}
\begin{split}
\mathcal{R}_0:=\{0\le a\le A,\,\, \theta\in\mathbb{R}\},\qquad\mathcal{R}_1:=\{a\ge A,\,\, \theta\le -ta\},\qquad\mathcal{R}_2:=\{a\ge A,\,\,\theta\ge-ta\}.
\end{split}
\end{equation*}
In the region $\mathcal{R}_0$, we see that for any $\theta$, there exists at most one $a=\aleph(\theta;r,t)$ solution of \eqref{InvertR}. In addition, we have that
\begin{equation}\label{PropertiesR0}
\begin{split}
\sqrt{\frac{q}{r}}\le \aleph\le 2\sqrt{\frac{q}{r}},\qquad \vert\theta+t\aleph\vert\le 3\hbar r,\qquad \vert\partial_a\widetilde{R}(\theta,\aleph)\vert\gtrsim \frac{r^\frac{3}{2}}{q^\frac{5}{2}}\gtrsim \frac{a^{-3}}{q}.
\end{split}
\end{equation}

In the region $\mathcal{R}_j$, $j\in\{1,2\}$, for each choice of $a$, there exists exactly one $\theta=\tau_j(a;r,t)$ solution of \eqref{InvertR}. In addition, we see that
\begin{equation}\label{CharacR1}
\begin{split}
\tau_1&\le -ta-r/2,\qquad a^2r\ge q,\qquad\partial_\theta\widetilde{R}(\tau_1,a)\gtrsim \hbar,\\
\end{split}
\end{equation}
while on $\mathcal{R}_2$, we see that
\begin{equation}\label{CharacR2}
\begin{split}
\partial_\theta\widetilde{R}(\tau_1,a)\gtrsim \hbar\qquad\hbox{ and }\qquad\hbox{either }1\le\frac{a^2r}{q}\le C,\quad\hbox{ or }\quad\partial_a\widetilde{R}(\tau_2,a)\ge ct.
\end{split}
\end{equation}

\end{lemma}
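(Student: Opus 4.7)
The plan is to reduce the equation $\widetilde{R}(\theta,a)=R(\theta+ta,a)=r$ to the Kepler trajectory geometry and analyze each region via the explicit branch structure of the solution set. For fixed $a$, the radial motion $s\mapsto R(s,a)$ is U-shaped with minimum $r_{min}(a)=q/a^2$ at $s=0$ and strictly monotonic on each half-line, so $R(s,a)=r$ has no solution if $a<\sqrt{q/r}$ and exactly two solutions $s=\pm s_0(a)$ with $s_0(a):=(q/a^2)G(\sigma)$ and $\sigma:=ra^2/q$ when $a\ge\sqrt{q/r}$. Setting
\[
\tau_\pm(a):=-ta\pm s_0(a),\qquad a\ge\sqrt{q/r},
\]
the solution set of \eqref{InvertR} is $\{(\tau_-(a),a)\}\cup\{(\tau_+(a),a)\}$, with $\tau_-(a)\le -ta\le\tau_+(a)$. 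This gives existence and uniqueness of $\tau_j(a;r,t)$ for each $a\ge A$ in $\mathcal{R}_j$, $j\in\{1,2\}$, directly.

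The main work is in $\mathcal{R}_0=\{a\le A\}$. Since $A=\sqrt{q/r}(1+\hbar)$ forces $\sigma-1\lesssim\hbar$ on the solution set, both branches are confined to a small arc near the periapsis. Uniqueness of $\aleph$ rests on two observations. First, both $\tau_\pm$ are strictly monotonic on $[\sqrt{q/r},A]$: differentiating gives $d\tau_\pm/da=-t\pm s_0'(a)$ with $s_0'(a)=(2r/a)[G'(\sigma)-G(\sigma)/\sigma]\gtrsim (r/a)(\sigma-1)^{-1/2}$ near the turning point (using the expansions of Lemma \ref{BehaviorGH}), and the definition $\hbar=c\min\{1,r^3/(q^5t^2)\}$ is exactly calibrated so that $s_0'\ge 2t$ throughout $\mathcal{R}_0$; hence $\tau_-$ is strictly decreasing while $\tau_+$ is strictly increasing. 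Second, the two monotonic branches emanate from a common tip at $a=\sqrt{q/r}$ into the disjoint half-planes $\{\theta<-ta\}$ and $\{\theta>-ta\}$, so at most one of them passes through any given $\theta$. The bounds on $\aleph$ are immediate, and $|\theta+t\aleph|=s_0(\aleph)$ follows from $\sigma-1\le 3\hbar$ together with the expansion $G(1+x)\approx 2\sqrt{x}$. The lower bound on $|\partial_a\widetilde{R}|$ comes from evaluating the formula $\partial_a R=(2/a)[|\theta+ta|\sqrt{1-q/(a^2R)}-R]$ (from Lemma \ref{ControlLLemma}) on the solution set: the first term in the bracket is of order $r\hbar$ against the dominant second term $-r$, giving $\partial_a R\approx -2r/a$, while $|t\partial_\theta R|\lesssim t\sqrt{\hbar}\ll r/a$ by the calibration of $\hbar$.

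For $\mathcal{R}_1$ and $\mathcal{R}_2$, the bound $|\partial_\theta\widetilde{R}(\tau_j,a)|=\sqrt{1-q/(a^2r)}$ is controlled by $\sigma\ge(1+\hbar)^2$, and the estimate $\tau_1\le-ta-r/2$ reduces to $s_0(a)\gtrsim r$, verified directly for moderate $\sigma$ and via the asymptotics $G(\sigma)\sim\sigma$ at infinity. The subtlest point is the dichotomy in \eqref{CharacR2}: unlike $\tau_-$, the branch $\tau_+$ can fail to be globally monotonic (because $s_0'(a)=t$ can occur outside $\mathcal{R}_0$), so $\partial_a\widetilde{R}$ has no definite sign on $\mathcal{R}_2$. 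The resolution is to split $\mathcal{R}_2$ into the near-periapsis regime $\sigma\in[1,C]$, where one relies on the $\partial_\theta$ estimate above, and the far regime $\sigma\gg 1$, where $|\partial_\theta R|\approx 1$ and thus $\partial_a\widetilde{R}=t\partial_\theta R+O(q\ln\sigma/a^3)\gtrsim t$. This non-monotonicity of $\tau_+$, stemming from the square-root singularity $G'(1)=\infty$ at the periapsis, is the main obstacle; the partition into $\mathcal{R}_0,\mathcal{R}_1,\mathcal{R}_2$ and the specific form of $\hbar$ are dictated precisely by the need to isolate the region where $s_0'$ dominates $t$, giving clean monotonicity in $\mathcal{R}_0$ while accepting a dichotomous description on $\mathcal{R}_2$.
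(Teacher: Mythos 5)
Your overall strategy — parametrizing the solution set of $\widetilde{R}(\theta,a)=r$ by the two branches $\theta=\tau_\pm(a)=-ta\pm s_0(a)$ with $s_0(a)=\tfrac{q}{a^2}G(ra^2/q)$, and handling $\mathcal{R}_0$ by showing $\tau_-$ decreases and $\tau_+$ increases from a common tip so their ranges meet only at the tip — is the same as the paper's, phrased somewhat more geometrically (the paper does the equivalent via the implicit equation $E_1(h)=0$ with $a=a^\ast\sqrt{1+h^2}$). Your calibration $s_0'(a)\gtrsim (r/a)(\sigma-1)^{-1/2}\ge 2t$ on $[\sqrt{q/r},A]$ and the identity $\partial_aR=\tfrac{2}{a}\bigl[|\vartheta|H'-R\bigr]$ both check out, and for $\mathcal{R}_1,\mathcal{R}_2$ the existence/uniqueness of $\tau_j$ for fixed $a$ is indeed immediate from the U-shape of $s\mapsto R(s,a)$.

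There is, however, a genuine gap in your treatment of the dichotomy in \eqref{CharacR2}. You write $\partial_a\widetilde{R}=t\partial_\theta R+O(q\ln\sigma/a^3)\gtrsim t$ in the far regime $\sigma\gg 1$, treating $\partial_aR$ purely as a magnitude-error term. But $q\ln\sigma/a^3$ need not be $\lesssim t$ — for small $t$ or small $a$ (close to $A$) it can dominate — and a magnitude bound alone cannot rule out a near-cancellation against $t\partial_\theta R$, nor does it yield a threshold $C$ independent of $r,t$. What is actually needed, and what the paper's argument supplies, is a \emph{sign} observation: with $\partial_aR(\vartheta,a)=\frac{q}{a^3}\bigl(\int_0^{x}\frac{s}{H^2}\,ds-2\bigr)$ (and $\vartheta\ge 0$ on $\mathcal{R}_2$), the bracket vanishes at some universal $x=b$, is negative for $x<b$ and positive for $x>b$. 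Hence for $x\ge 2b$ (equivalently $a^2r/q=H(x)\ge H(2b)=:C$) both terms in $\partial_a\widetilde{R}=tH'(x)+\partial_aR$ are nonnegative, giving $\partial_a\widetilde{R}\ge tH'(2b)\ge ct$ with a universal $c$; for $x\le 2b$ one lands in the bounded window $1\le a^2r/q\le C$. Your split into ``near'' and ``far'' regimes points in the right direction, but without the sign of $\partial_aR$ the far-regime lower bound does not follow. You should also be aware that $s_0$ is not globally increasing (indeed $s_0'(a)=\tfrac{2r}{a}\bigl[G'(\sigma)-G(\sigma)/\sigma\bigr]$ changes sign at a universal value of $\sigma$), which is consistent with the paper's approach of arguing via the explicit sign change of the bracket rather than via monotonicity of the branches outside $\mathcal{R}_0$.
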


\begin{proof}

We can rewrite \eqref{InvertR} as
\begin{equation*}
\begin{split}
\frac{q}{a^2}H(\frac{a^2\vert\theta+ta\vert}{q})=r\quad\Leftrightarrow\quad \vert\theta+ta\vert=\frac{q}{a^2}G(\frac{a^2 r}{q})\quad\Leftrightarrow\quad \theta=-ta\pm \frac{q}{a^2}G(\frac{a^2 r}{q}).
\end{split}
\end{equation*}
We start with $\mathcal{R}_0$ and denote $a^\ast=\sqrt{q/r}$, $\vartheta=\theta+ta$ and $x=a^2\vert\vartheta\vert/q$ so that we are considering the equation
\begin{equation}\label{SolInvertR}
\begin{split}
r=\frac{q}{a^\ast}=\frac{q}{a^2}H(\frac{a^2\vert\vartheta\vert}{q})\quad\Leftrightarrow\quad x=G((\frac{a}{a^\ast})^2).
\end{split}
\end{equation}
Now let $a=a^\ast\sqrt{1+h^2}$ for some $h\ge0$. We have that by \eqref{EstimG} and Lemma \ref{ControlLLemma}
\begin{equation}\label{NewEqAAA}
\begin{split}
E_1=x-G((\frac{a}{a^\ast})^2)&=x-2h+O(h^3),\\
\partial_a\widetilde{R}(\theta,a)&=t\frac{\vartheta}{\vert\vartheta\vert}H^\prime(x)-\frac{2}{q (a^\ast)^3}\left[1+h^2\right]^{-\frac{3}{2}}\left(1-\int_{s=0}^x\frac{s}{2H^2(s)}ds\right).
\end{split}
\end{equation}
From this we see that if $h$ is small enough, $0\le h\le c$, there exists a unique solution $x$ to $E_1=0$, and this solution satisfies $h\le x\le 3h$. In addition, if $h\le c/(q(a^\ast)^3t)$, there holds that $\partial_a\widetilde{R}\lesssim -1/(q(a^\ast)^3)$.

Now in region $\mathcal{R}_j$, $j\in\{1,2\}$, we see that
\begin{equation*}
\begin{split}
x=G(1+h^2)\ge h\ge\hbar
\end{split}
\end{equation*}
and in particular, using \eqref{SolInvertR}, we find that
\begin{equation*}
\begin{split}
\tau_1(a;r,t)&:=-ta- \frac{q}{a^2}G(\frac{a^2 r}{q})<-ta<
\tau_2(a;r,t):=-ta+\frac{q}{a^2}G(\frac{a^2 r}{q}).
\end{split}
\end{equation*}
In addition, we have that
\begin{equation*}
\begin{split}
\partial_\theta \widetilde{R}(\theta,a)&=H^\prime(x)\gtrsim\hbar.\end{split}
\end{equation*}
and the other bounds in \eqref{CharacR1} follow directly from the definitions. 

Finally, the last statement in \eqref{CharacR2} follows from the formula for $\partial_a\widetilde{R}$ in \eqref{NewEqAAA}: Note that $\vartheta/\abs{\vartheta}=1$ and letting $x=b$ the bound where the term in parenthesis vanishes, then when $0\le x\le 2b$, there holds that $q\le a^2r\le H(2b)$, while for $x\ge 2b$ we see that both terms have same sign and $H^{\prime\prime}\ge0$, so that
\begin{equation*}
\begin{split}
\partial_a\widetilde{R}(\theta,a)\ge tH^\prime(2b).
\end{split}
\end{equation*}

\end{proof}

\subsection{Electric field and potential}\label{EFEPSSec}
Given an instantaneous density distribution $\mu(r,v,t)$, it is useful to introduce the ``physical potential'' ${\bf\Psi}$ of the associated electric field as in \eqref{eq:VPrad}, explicitly given as
\begin{equation}
 {\bf\Psi}(r,t)=\iint_{r,s}\frac{1}{\max\{r,s\}}\mu^2(s,v,t)ds dv=\iint_{\vartheta,\alpha}\frac{1}{\max\{r,R(\vartheta+t\alpha,\alpha)\}}\gamma^2(\vartheta,\alpha,t)\, d\vartheta d\alpha,
\end{equation}
when $\gamma(\vartheta,\alpha,t)$ is the corresponding density distribution in action-angle coordinates as in \eqref{NewUnknown}. Then \eqref{DefPPsi} can be rewritten as $\PPsi(\theta,a,t)={\bf \Psi}(\widetilde{R}(\theta,a),t)$. This allows to obtain formulas for the derivatives of $\PPsi$ in action angle variables in terms of the electric field $\bf E$, the local mass ${\bf m}$ and the density $\bm\varrho$ (compare also \eqref{eq:VPrad}):
\begin{equation}\label{DefENewVar}
\begin{split}
{\bf E}(r,t)=-\partial_r{\bf \Psi}(r,t)&=\frac{{\bf m}(r,t)}{r^2},\qquad {\bf m}(r,t):=\iint_{\vartheta,\alpha}\mathfrak{1}_{\{R(\vartheta+t\alpha,\alpha)\le r\}}\gamma^2(\vartheta,\alpha,t)\, d\vartheta d\alpha,\\
\bm\varrho(r,t)&:=\partial_r{\bf m}(r,t)=\iint\delta(R(\vartheta+t\alpha,\alpha)-r)\cdot\gamma^2(\vartheta,\alpha,t)d\vartheta d\alpha.
\end{split}
\end{equation}
Then for $\beta\in\{a,\theta\}$ we have
\begin{equation}\label{DerPPsi}
\begin{split}
\partial_\beta\PPsi&=-\frac{{\bf m}(\widetilde{R}(\theta,a),t)}{\widetilde{R}^2(\theta,a)}\cdot\partial_\beta \widetilde{R}(\theta,a),\\
\partial_\alpha\partial_\beta\PPsi&=-\frac{{\bf m}(\widetilde{R})}{\widetilde{R}^2}\left(\partial_\alpha\partial_\beta\widetilde{R}-2\frac{\partial_\alpha\widetilde{R}\partial_\beta\widetilde{R}}{\widetilde{R}}\right)-\bm\varrho(\widetilde{R})\cdot\frac{\partial_\alpha\widetilde{R}\partial_\beta\widetilde{R}}{\widetilde{R}^2}.
\end{split}
\end{equation}

We note that the local mass has a trivial uniform bound
\begin{equation}\label{UniformBdLocalMass1}
0\le {\bf m}(r)\le{\bf m}(\infty):=\Vert\gamma\Vert_{L^2_{\theta,a}}^2
\end{equation}
but this can be made more precise.
\begin{lemma}\label{lem:ControlOnEField}
We can decompose ${\bf m}$ as
\begin{equation*}
\begin{split}
{\bf m}(r,t)&={\bf m}_s(r,t)+{\bf m}_n(r,t),
\end{split}
\end{equation*}
where we have that for any $\ell,\kappa>0$
\begin{equation}\label{DecMass}
\begin{split}
0\le {\bf m}_s(r,t)&\lesssim \left(\frac{r}{t}\right)^\ell\Vert a^{-\frac{\ell}{2}}\gamma\Vert_{L^2_{\theta,a}}^2,\\
0\le {\bf m}_n(r,t)&\lesssim \left(\frac{r}{t}\right)^\ell t^{-\frac{\kappa-\ell}{2}}\left[\Vert a^{-\kappa}\gamma\Vert_{L^2_{\theta,a}}^2+\Vert a^{\frac{\ell-\kappa}{2}}\theta^{\frac{\ell+\kappa}{2}}\gamma\Vert_{L^2_{\theta,a}}^2\right].
\end{split}
\end{equation}

\end{lemma}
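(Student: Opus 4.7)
The starting point is to rewrite ${\bf m}(r,t)=\iint_{D_r}\gamma^2\,d\vartheta d\alpha$ where $D_r:=\{(\vartheta,\alpha):\widetilde{R}(\vartheta,\alpha)\le r\}$ and to describe this region explicitly. Using the formula $R(\theta,a)=(q/a^2)H(a^2\abs{\theta}/q)$ from \eqref{eq:var_def} together with $H=G^{-1}$, the condition $\widetilde{R}(\vartheta,\alpha)\le r$ is equivalent to
\[
\alpha\ge\sqrt{q/r}\qquad\text{and}\qquad\abs{\vartheta+t\alpha}\le L(\alpha;r):=\tfrac{q}{\alpha^2}G\bigl(\tfrac{\alpha^2 r}{q}\bigr)\le 2r,
\]
the last inequality coming from Lemma~\ref{BehaviorGH}. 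In other words, $D_r$ is contained in a strip of width at most $4r$ about the line $\vartheta=-t\alpha$ in phase space.

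I would then split at the natural velocity threshold $\alpha=Cr/t$ (with $C\ge 4$): set
\[
{\bf m}_s(r,t):=\iint_{D_r\cap\{\alpha\le Cr/t\}}\gamma^2\,d\vartheta d\alpha,\qquad {\bf m}_n(r,t):=\iint_{D_r\cap\{\alpha\ge Cr/t\}}\gamma^2\,d\vartheta d\alpha.
\]
The bound on ${\bf m}_s$ is an immediate Chebyshev inequality with weight $a^{-\ell/2}$: since $\alpha\le Cr/t$ on the integration set,
\[
{\bf m}_s\le\iint_{\alpha\le Cr/t}\alpha^\ell\cdot\alpha^{-\ell}\gamma^2\,d\vartheta d\alpha\le(Cr/t)^\ell\norm{a^{-\ell/2}\gamma}^2_{L^2}.
\]
For ${\bf m}_n$ the decisive observation is kinematic: on this set $\abs{\vartheta+t\alpha}\le 2r\le 2t\alpha/C$ forces $\abs{\vartheta}\ge(1-2/C)t\alpha\ge t\alpha/2$. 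I would then perform a second internal split at an auxiliary threshold $\alpha^{\ast\ast}\ge Cr/t$: on $\{\alpha\le\alpha^{\ast\ast}\}$ Chebyshev against $a^{-\kappa}$ gives the contribution $(\alpha^{\ast\ast})^{2\kappa}\norm{a^{-\kappa}\gamma}^2_{L^2}$, while on $\{\alpha\ge\alpha^{\ast\ast}\}$, combining $\abs{\vartheta}\ge t\alpha/2$ with Chebyshev against the weight $a^{(\ell-\kappa)/2}\theta^{(\ell+\kappa)/2}$ yields $(2/t)^{\ell+\kappa}(\alpha^{\ast\ast})^{-2\ell}\norm{a^{(\ell-\kappa)/2}\theta^{(\ell+\kappa)/2}\gamma}^2_{L^2}$. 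Choosing $\alpha^{\ast\ast}$ so that both coefficients line up with the target $(r/t)^\ell t^{-(\kappa-\ell)/2}$, and applying a Young-type inequality to absorb the resulting geometric mean of the two weighted norms into their sum, should deliver the claimed bound.

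The hard part will be verifying that these two Chebyshev sub-estimates for ${\bf m}_n$ combine to the target uniformly in the regime of $r$ and $t$: for some ratios of $r/t$ and some values of $\ell,\kappa$ the ``free'' optimal threshold $\alpha^{\ast\ast}$ drops below $Cr/t$, in which case one of the sub-pieces is empty and one must verify that the remaining piece still satisfies the bound directly by exploiting the outer constraint $\alpha\ge Cr/t$ to recover the missing factor $(r/t)^\ell$. Tracking the $q$-dependent constants across these sub-cases is tedious but mechanical given the explicit description of $L(\alpha;r)$ and the estimates on $G, H$ of Lemma~\ref{BehaviorGH}.
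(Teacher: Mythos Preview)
Your decomposition is genuinely different from the paper's. The paper splits along the \emph{$r$-independent} bulk zone $\mathcal{B}=\{a\ge t^{-1/4},\ \vert\theta\vert\le ta/2\}$, then extracts the factor $r^\ell$ by the simple pointwise bound
\[
\mathfrak{1}_{\{\widetilde R\le r\}}\le\Bigl(\tfrac{r}{\widetilde R}\Bigr)^{\!\ell}\le\Bigl(\tfrac{r\alpha^2}{q}\Bigr)^{\!\ell},
\]
after which the two pieces of $\mathcal{B}^c$ (small $\alpha$ versus large $\vert\vartheta\vert$) give the two weighted norms directly, with no balancing at all. Your $r$-dependent split at $\alpha=Cr/t$ is a legitimate alternative and your bound on ${\bf m}_s$ is correct.

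There is, however, a real gap in your plan for ${\bf m}_n$. The second internal split at $\alpha^{\ast\ast}$ does lead to the edge case you flag, but the patch you propose --- using the outer constraint $\alpha\ge Cr/t$ to recover $(r/t)^\ell$ --- does not close it: plugging $\alpha^{-2\ell}\le (Cr/t)^{-2\ell}$ into your second sub-estimate yields a coefficient $\sim r^{-2\ell}t^{\ell-\kappa}$, which is not dominated by $r^\ell t^{-(\ell+\kappa)/2}$ for small $r$. The fix is already in your hands: on all of $D_r$ you noted $\alpha\ge\sqrt{q/r}$, and this is the constraint that works. Using it directly (no second split needed) gives, on your ${\bf m}_n$ where $\vert\vartheta\vert\ge t\alpha/2$,
\[
\gamma^2\le (2/t)^{\ell+\kappa}\,\alpha^{-2\ell}\,(\alpha^{(\ell-\kappa)/2}\vert\vartheta\vert^{(\ell+\kappa)/2}\gamma)^2
\le (2/t)^{\ell+\kappa}(r/q)^{\ell}\,(\alpha^{(\ell-\kappa)/2}\vert\vartheta\vert^{(\ell+\kappa)/2}\gamma)^2,
\]
hence ${\bf m}_n\lesssim r^\ell t^{-(\ell+\kappa)}\Vert a^{(\ell-\kappa)/2}\theta^{(\ell+\kappa)/2}\gamma\Vert_{L^2}^2\le (r/t)^\ell t^{-(\kappa-\ell)/2}\Vert\cdot\Vert^2$, which is even stronger than the stated bound (the $\Vert a^{-\kappa}\gamma\Vert^2$ term is not needed for your decomposition). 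So drop the $\alpha^{\ast\ast}$ split entirely and the ``hard part'' disappears.
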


\begin{proof}[Proof of Lemma \ref{lem:ControlOnEField}]

The decomposition corresponds to localizing in and out of the bulk zone defined in \eqref{DefBulkZone}. Thus
\begin{equation*}
\begin{split}
{\bf m}_s(r,t)&:= \iint \mathfrak{1}_{\{\widetilde{R}(\vartheta,\alpha)\le r\}}\mathfrak{1}_{\mathcal{B}}\cdot\gamma^2(\vartheta,\alpha)\,d\vartheta d\alpha,\qquad {\bf m}_n(r,t):=\iint \mathfrak{1}_{\{\widetilde{R}(\vartheta,\alpha)\le r\}}\mathfrak{1}_{\mathcal{B}^c}\cdot\gamma^2(\vartheta,\alpha)\,d\vartheta d\alpha.
\end{split}
\end{equation*}
Using Lemma \ref{DerRLem}, we see that
\begin{equation*}
\begin{split}
{\bf m}_s(r,t)&\le r^\kappa\iint \mathfrak{1}_{\mathcal{B}}\cdot\gamma^2(\vartheta,\alpha)\,\frac{d\vartheta d\alpha}{\widetilde{R}^\kappa(\vartheta,\alpha)}\lesssim (r/t)^{\kappa}  \iint \alpha^{-\kappa}\cdot\gamma^2(\vartheta,\alpha)\,d\vartheta d\alpha,
\end{split}
\end{equation*}
while using \eqref{EstimRa},
\begin{equation*}
\begin{split}
\frac{{\bf m}_n(r,t)}{r^\kappa}&\le\iint \mathfrak{1}_{\mathcal{B}^c}\cdot\gamma^2(\vartheta,\alpha)\,\frac{d\vartheta d\alpha}{\widetilde{R}^\kappa(\vartheta,\alpha)}\lesssim \iint \left[\mathfrak{1}_{\{\vert\alpha\vert\le t^{-\frac{1}{4}}\}}+\mathfrak{1}_{\{\vert\vartheta\vert\ge t\alpha/2\}}\right]\cdot \alpha^{2\kappa}\cdot\gamma^2(\vartheta,\alpha)\,d\vartheta d\alpha.
\end{split}
\end{equation*}

\end{proof}

We will use the following consequences:
\begin{proposition}\label{BoundsOnE}

There holds that
\begin{equation}\label{BdsDThetaPsi}
\begin{split}
t^\frac{3}{2}\cdot \sup_{\theta,a}\frac{1}{a} \vert\partial_\theta\PPsi(\theta,a)\vert &\lesssim \Vert a^{-\frac{3}{4}}\gamma\Vert_{L^2_{\theta,a}}^2+t^{-\frac{1}{4}}\Vert (a^{-2}+\theta^2)\gamma\Vert_{L^2_{\theta,a}}^2
\end{split}
\end{equation}
and
\begin{equation}\label{BdsDaPsi}
\begin{split}
t\cdot \vert\partial_a\PPsi(\vartheta,\alpha)\vert &\lesssim \Vert a^{-1}\gamma\Vert_{L^2_{\theta,a}}^2+\Vert\gamma\Vert_{L^2_{\theta,a}}^2\cdot \left[\vert\vartheta\vert+\alpha^{-3}\right]+t^{-\frac{1}{4}}\Vert (a^{-\frac{5}{2}}+\theta+\theta^{\frac{5}{2}})\gamma\Vert_{L^2_{\theta,a}}^2.
\end{split}
\end{equation}

\end{proposition}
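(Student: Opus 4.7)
The plan is to work pointwise from the identities in \eqref{DerPPsi}, namely $\partial_\beta \PPsi = -(\mm(\widetilde R)/\widetilde R^2)\partial_\beta\widetilde R$, bounding $\partial_\beta \widetilde R$ via Lemma \ref{DerRLem} and the local mass $\mm(\widetilde R)$ via Lemma \ref{lem:ControlOnEField}. The main calibration tool throughout is the universal inequality $\widetilde R(\theta,a) \ge q/a^2$ from \eqref{EstimRa}, which converts the inverse powers of $a$ on the left into controllable powers of $\widetilde R$.

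For \eqref{BdsDThetaPsi}, since $|\partial_\theta \widetilde R| \le 1$, the task reduces to estimating $\mm(\widetilde R)/(a\widetilde R^2)$ pointwise in $(\theta,a)$. For the bulk contribution $\mm_s$, I would apply Lemma \ref{lem:ControlOnEField} with $\ell = 3/2$ to obtain $\mm_s/(a\widetilde R^2)\lesssim \widetilde R^{-1/2}/(at^{3/2})\Vert a^{-3/4}\gamma\Vert_{L^2_{\theta,a}}^2$, and then use the key observation that $\widetilde R^{-1/2}/a \le 1/\sqrt{q}$ via \eqref{EstimRa} to conclude a bound of size $t^{-3/2}\Vert a^{-3/4}\gamma\Vert_{L^2_{\theta,a}}^2$. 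For the non-bulk part $\mm_n$, I would split the sub-cases $a\ge t^{-1/4}$ and $a < t^{-1/4}$: in the first, apply the lemma with $\ell=\kappa=2$ (using $1/a\le t^{1/4}$); in the second, with $\ell=0,\kappa=2$ (using $\widetilde R^{-2}\le a^4/q^2$ together with $a^3 \le t^{-3/4}$). In both sub-cases a pointwise AM-GM bound collapses $\Vert a^{-1}\theta\gamma\Vert_{L^2_{\theta,a}}^2$ into $\Vert(a^{-2}+\theta^2)\gamma\Vert_{L^2_{\theta,a}}^2$, producing the claimed $t^{-7/4}$ rate.

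For \eqref{BdsDaPsi}, the estimate $|\partial_a\widetilde R|\lesssim t+q\alpha^{-3}\ln\langle\alpha^2\widetilde R/q\rangle$ of Lemma \ref{DerRLem} yields a natural split into a ``linear'' and a ``logarithmic'' contribution. The linear piece $t^2\mm(\widetilde R)/\widetilde R^2$ is handled as above: with $\ell = 2$ for $\mm_s$ it produces the undecaying $\Vert a^{-1}\gamma\Vert_{L^2_{\theta,a}}^2$ term; with $(\ell,\kappa)=(2,5/2)$ for $\mm_n$ it produces the claimed $t^{-1/4}$ decay, and a Young-type bound $a^{-1/4}\theta^{9/4}\lesssim a^{-5/2}+\theta^{5/2}$ absorbs the resulting weight into $\Vert(a^{-5/2}+\theta+\theta^{5/2})\gamma\Vert_{L^2_{\theta,a}}^2$. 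The logarithmic piece $tq\alpha^{-3}\ln\langle\cdot\rangle\,\mm/\widetilde R^2$ I would bound using the trivial $\mm\le\Vert\gamma\Vert_{L^2_{\theta,a}}^2$ combined with the two-sided bound $|\vartheta+t\alpha|/2\le \widetilde R \lesssim |\vartheta|+t\alpha+q/\alpha^2$ coming from $s/2 \le H(s) \le s+1$ in Lemma \ref{BehaviorGH}, producing the evaluation-point-dependent contribution $[|\vartheta|+\alpha^{-3}]\Vert\gamma\Vert_{L^2_{\theta,a}}^2$.

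The main obstacle is the case-analysis book-keeping: the universal bound $\widetilde R\ge q/a^2$ and the bulk estimate $\widetilde R\sim ta$ are each sharp only in complementary regimes, so the exponents $(\ell,\kappa)$ in Lemma \ref{lem:ControlOnEField} must be tuned differently in each regime to produce precisely the stated $a$- and $\theta$-weighted norms. The logarithmic term in the $\partial_a\PPsi$ bound is especially delicate, as it requires the simultaneous use of upper and lower bounds on $\widetilde R$ and is responsible for the somewhat unusual evaluation-point-dependent factor $[|\vartheta|+\alpha^{-3}]$.
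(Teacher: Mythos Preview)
Your proposal is correct and follows the same overall strategy as the paper: reduce via \eqref{DerPPsi} to bounding $\mm(\widetilde R)/\widetilde R^2$ times $\partial_\beta\widetilde R$, control $\partial_\beta\widetilde R$ by Lemma~\ref{DerRLem}, and feed in the mass decomposition of Lemma~\ref{lem:ControlOnEField} calibrated by \eqref{EstimRa}. Two tactical differences are worth noting.

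For \eqref{BdsDThetaPsi}, the paper avoids your case split on $a\gtrless t^{-1/4}$ entirely: it applies $a\widetilde R^{1/2}\ge\sqrt{q}$ once to write $\mm/(a\widetilde R^2)\le q^{-1/2}\mm/\widetilde R^{3/2}$, and then invokes Lemma~\ref{lem:ControlOnEField} with the single choice $(\ell,\kappa)=(3/2,2)$ for both $\mm_s$ and $\mm_n$. Your split works but is unnecessary.

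For the logarithmic piece in \eqref{BdsDaPsi}, the paper's argument is structurally different from yours. Rather than relying on the two-sided bound on $\widetilde R$ alone, the paper rewrites the term as $\dfrac{\mm(\widetilde R)}{\widetilde R^{1/2}}\cdot\dfrac{\ln\langle a^2\widetilde R/q\rangle}{\sqrt{q}\,(a^2\widetilde R/q)^{3/2}}$ and splits on whether $a^2\widetilde R/q\ge t^{1/2}$. In the large case the factor $\ln\langle x\rangle/x^{3/2}$ supplies extra $t$-decay and one applies the mass decomposition with $(\ell,\kappa)=(1/2,1)$. In the small case the paper observes via Lemma~\ref{DerRLem} that necessarily $(\theta,a)\in\mathcal{B}^c$, bounds the whole expression by $a\Vert\gamma\Vert_{L^2}^2$, and then uses the elementary inequality $a\,\mathfrak{1}_{\mathcal{B}^c}\lesssim t^{-1}(|\theta|+a^{-3})$ (from the very definition \eqref{DefBulkZone} of $\mathcal{B}$) to produce the evaluation-point-dependent factor. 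Your direct route via the two-sided bound on $\widetilde R$ can be pushed through, but requires a case analysis in $(\vartheta,\alpha)$ that you have not spelled out; the paper's detour through $\mathcal{B}^c$ makes the origin of the factor $|\vartheta|+\alpha^{-3}$ more transparent.
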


\begin{proof}[Proof of Proposition \ref{BoundsOnE}]
 Using \eqref{EstimRa} and \eqref{BdRTFirstDer}, we find that
\begin{equation*}
\begin{split}
\frac{1}{a} \vert\partial_\theta\PPsi(\theta,a)\vert&=\frac{{\bf m}(\widetilde{R}(\theta,a),t)}{a\widetilde{R}^2(\theta,a)}\vert\partial_\theta\widetilde{R}(\theta,a)\vert\le \frac{1}{\sqrt{q}}\frac{{\bf m}(\widetilde{R}(\theta,a),t)}{\widetilde{R}^\frac{3}{2}(\theta,a)}
\end{split}
\end{equation*}
and we can use \eqref{DecMass} with $\ell=3/2$, $\kappa=2$. For \eqref{BdsDaPsi}, we use \eqref{BdRTFirstDer} to get that
\begin{equation*}
\begin{split}
\vert\partial_a\PPsi(\theta,a)\vert&=\frac{{\bf m}(\widetilde{R}(\theta,a),t)}{\widetilde{R}^2(\theta,a)}\vert\partial_a\widetilde{R}(\theta,a)\vert\lesssim t\frac{{\bf m}(\widetilde{R}(\theta,a),t)}{\widetilde{R}^2(\theta,a)}+\frac{{\bf m}(\widetilde{R}(\theta,a),t)}{\widetilde{R}^\frac{1}{2}(\theta,a)}\frac{\ln\langle\frac{a^2}{q}\widetilde{R}(\theta,a)\rangle}{\sqrt{q}(\frac{a^2}{q}\widetilde{R}(\theta,a))^\frac{3}{2}}.
\end{split}
\end{equation*}
From \eqref{DecMass} with $\ell=2$, $\kappa=\frac{5}{2}$ we obtain
\begin{equation*}
\begin{split}
t\frac{{\bf m}(\widetilde{R}(\theta,a),t)}{\widetilde{R}^2(\theta,a)}\lesssim t^{-1}\Vert a^{-1}\gamma\Vert_{L^2_{\theta,a}}^2+t^{-\frac{5}{4}}\Vert (a^{-\frac{5}{2}}+\theta^{\frac{5}{2}})\gamma\Vert_{L^2_{\theta,a}}^2.
\end{split}
\end{equation*}
Similarly, if $\frac{a^2}{q}\widetilde{R}(\theta,a)\ge t^\frac{1}{2}$, we can use \eqref{DecMass} ($\ell=\frac{1}{2}$, $\kappa=1$) to get
\begin{equation*}
\begin{split}
\frac{{\bf m}(\widetilde{R}(\theta,a),t)}{\widetilde{R}^\frac{1}{2}(\theta,a)}\frac{\ln\langle\frac{a^2}{q}\widetilde{R}(\theta,a)\rangle}{\sqrt{q}(\frac{a^2}{q}\widetilde{R}(\theta,a))^\frac{3}{2}}\lesssim t^{-\frac{2}{3}}\frac{{\bf m}(\widetilde{R}(\theta,a),t)}{\widetilde{R}^\frac{1}{2}(\theta,a)}\lesssim t^{-\frac{7}{6}}\Vert a^{-1}\gamma\Vert_{L^2_{\theta,a}} +t^{-\frac{7}{6}-\frac{1}{4}}\Vert (a^{-1}+\theta)\gamma\Vert_{L^2_{\theta,a}}.
\end{split}
\end{equation*}
On the other hand, it follows from Lemma \ref{DerRLem} that if $q\le a^2\widetilde{R}(\theta,a)\le t^\frac{1}{2}$, then $(\theta,a)\in\mathcal{B}^c$, and we use that
\begin{equation*}
\begin{split}
\frac{{\bf m}(\widetilde{R}(\theta,a),t)}{\widetilde{R}^\frac{1}{2}(\theta,a)}\frac{\ln\langle\frac{a^2}{q}\widetilde{R}(\theta,a)\rangle}{\sqrt{q}(\frac{a^2}{q}\widetilde{R}(\theta,a))^\frac{3}{2}}\mathfrak{1}_{\mathcal{B}^c}\lesssim a\Vert\gamma\Vert_{L^2_{\theta,a}}^2\mathfrak{1}_{\mathcal{B}^c}\lesssim t^{-1}\Vert\gamma\Vert_{L^2_{\theta,a}}^2\cdot \left[\vert\theta\vert+a^{-3}\right],
\end{split}
\end{equation*}
which gives \eqref{BdsDaPsi}.

\end{proof}

\subsubsection{Study of the density}

Controlling derivatives of $\gamma$ requires estimates on the density; these are obtained in a similar way to the mass (see Lemma \ref{lem:ControlOnEField}), but are more involved. 

\begin{lemma}\label{LemDensity}

The density can be decomposed into two terms,
\begin{equation*}
\begin{split}
\bm\varrho&=\bm\varrho_s+\bm\varrho_n
\end{split}
\end{equation*}
where for $\kappa,\sigma\ge0$,
\begin{equation}\label{DecRho}
\begin{split}
\vert\bm\varrho_s(r,t)/r^\kappa\vert&\lesssim t^{-1-\kappa}\left[\Vert a\partial_a\gamma\Vert_{L^2_{\theta,a}}\Vert a^{-1-\kappa}\gamma\Vert_{L^2_{\theta,a}}+\Vert a^{-\frac{\kappa+1}{2}}\gamma\Vert_{L^2_{\theta,a}}^2\right],\\
\vert\bm\varrho_n(r,t)/r^\kappa\vert& \lesssim t^{-1-\kappa-\sigma}\left[\Vert\partial_\theta\gamma\Vert_{L^2_{\theta,a}}+\Vert a\partial_a\gamma\Vert_{L^2_{\theta,a}}\right]\cdot \Vert (1+a^{-2\kappa-4\sigma-6}+a^{\kappa-\sigma+3}\theta^{\kappa+\sigma+3})\gamma\Vert_{L^2_{\theta,a}}.
\end{split}
\end{equation}

\end{lemma}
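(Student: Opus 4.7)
The plan mirrors the proof of Lemma \ref{lem:ControlOnEField}: starting from the formula
\begin{equation*}
\bvarrho(r,t)=\iint \delta(\widetilde{R}(\vartheta,\alpha)-r)\,\gamma^2(\vartheta,\alpha)\,d\vartheta\, d\alpha
\end{equation*}
in \eqref{DefENewVar}, I would split the integration domain according to whether $(\vartheta,\alpha)$ lies in the bulk region $\mathcal{B}$ of \eqref{DefBulkZone} (giving $\bvarrho_s$) or in $\mathcal{B}^c$ (giving $\bvarrho_n$). In each piece the Dirac mass is resolved via an implicit-function argument supplied by Lemmas \ref{DerRLem} and \ref{LemDiracMeasureDensity}, and the resulting weighted integral of $\gamma^2$ is bounded by a one-dimensional Sobolev / fundamental-theorem-of-calculus estimate.

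For $\bvarrho_s$: By Lemma \ref{DerRLem} the map $\alpha\mapsto \widetilde{R}(\vartheta,\alpha)$ is strictly monotone on $\mathcal{B}$ with $\partial_\alpha\widetilde{R}\ge 3t/4$, so $\widetilde{R}(\vartheta,\alpha)=r$ has at most one solution $\alpha=\alpha(\vartheta)$; and the bulk bound $ta/2\le \widetilde{R}\le 2ta$ forces $\alpha(\vartheta)\sim r/t$. Evaluating the delta yields $\bvarrho_s(r,t)\lesssim t^{-1}\int \gamma^2(\vartheta,\alpha(\vartheta))\,d\vartheta$. For the weighted integral one writes, for each $\vartheta$ and using decay of $\gamma$ at $\alpha=\infty$,
\begin{equation*}
\frac{\gamma^2(\vartheta,\alpha(\vartheta))}{\alpha(\vartheta)^\kappa}=-\int_{\alpha(\vartheta)}^\infty \partial_\alpha\!\left(\frac{\gamma^2(\vartheta,\alpha)}{\alpha^\kappa}\right)d\alpha,
\end{equation*}
takes absolute values, integrates over $\vartheta$, and applies Cauchy-Schwarz to the expansion $\partial_\alpha(\gamma^2/\alpha^\kappa)=2\gamma\partial_\alpha\gamma/\alpha^\kappa-\kappa\gamma^2/\alpha^{\kappa+1}$. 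This produces $\|a\partial_a\gamma\|_{L^2}\|a^{-1-\kappa}\gamma\|_{L^2}+\|a^{-(\kappa+1)/2}\gamma\|_{L^2}^2$. Multiplying through by $\sup_\vartheta \alpha(\vartheta)^\kappa\lesssim (r/t)^\kappa$ and combining with the $t^{-1}$ prefactor gives the stated bound for $\bvarrho_s/r^\kappa$.

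For $\bvarrho_n$: I would further split $\mathcal{B}^c$ using the regions $\mathcal{R}_0,\mathcal{R}_1,\mathcal{R}_2$ of Lemma \ref{LemDiracMeasureDensity}. In $\mathcal{R}_0\cap\mathcal{B}^c$ there is at most one $\alpha=\aleph(\vartheta)$ solving $\widetilde{R}=r$, with $\aleph\sim\sqrt{q/r}$ and $|\partial_\alpha\widetilde{R}|\gtrsim r^{3/2}/q^{5/2}$, so one integrates out $\alpha$ and proceeds as in the bulk case. In $\mathcal{R}_j\cap\mathcal{B}^c$, $j\in\{1,2\}$, there is exactly one $\vartheta=\tau_j(\alpha)$ and one has $|\partial_\theta\widetilde{R}|\gtrsim\hbar$ (with the alternative $|\partial_\alpha\widetilde{R}|\gtrsim t$ on the good part of $\mathcal{R}_2$), so one integrates out $\vartheta$ and then applies the same weighted FTC identity as above, but in $\vartheta$ rather than in $\alpha$ (which is why $\|\partial_\theta\gamma\|_{L^2}$ enters alongside $\|a\partial_a\gamma\|_{L^2}$). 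The extra factor $t^{-\sigma}$ together with the weight $(1+a^{-2\kappa-4\sigma-6}+a^{\kappa-\sigma+3}\theta^{\kappa+\sigma+3})$ in \eqref{DecRho} is produced by exploiting the defining constraints of $\mathcal{B}^c$: on $\{a\le t^{-1/4}\}$ any $a^{-4\sigma}$ dominates $t^\sigma$, while on $\{|\theta|\ge t\alpha/2\}$ one trades $(t\alpha)^{-\sigma}$ for $|\theta|^{-\sigma}$, producing the cross-weight $a^{\kappa-\sigma+3}\theta^{\kappa+\sigma+3}$ after Cauchy-Schwarz.

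The main technical obstacle is the bookkeeping for $\bvarrho_n$: one has to handle the degeneracy of $\partial_\theta\widetilde{R}$ near the apsidal barrier $a^2r\sim q$ (where the lower bound $\hbar$ degenerates to $\min(1,r^3/(q^5t^2))$) and carefully distribute the exponent $\sigma$ between the two competing parts of $\mathcal{B}^c$ (small $a$ versus large $|\theta|/a$) so that the weighted Cauchy-Schwarz step closes uniformly in $r$ and $t$. By contrast, the bulk step for $\bvarrho_s$ is essentially automatic once one recognises that the required factor $r^\kappa$ is exactly $(t\alpha(\vartheta))^\kappa$ coming from solving $\widetilde{R}=r$ in the bulk regime.
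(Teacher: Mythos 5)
Your plan gets the same estimates and uses the same ingredients (the bulk bound $\partial_a\widetilde{R}\gtrsim t$ from Lemma \ref{DerRLem}, the delta-resolution Lemma \ref{LemDiracMeasureDensity}, and an FTC/Cauchy--Schwarz step producing $\Vert a\partial_a\gamma\Vert_{L^2}$ or $\Vert\partial_\theta\gamma\Vert_{L^2}$), but the decomposition itself is organized differently from the paper's. The paper does not split the Dirac integral by $\mathcal{B}/\mathcal{B}^c$ at the outset. It first introduces a smooth cutoff $\chi^{[1]}=\varphi_{\ge1}(t^{-1}\partial_a\widetilde{R})$ (so $\chi^{[2]}=1-\chi^{[1]}$) and writes ${\bf m}={\bf m}^1+{\bf m}^2$; on $\{\chi^{[1]}\ne0\}$ the chain-rule identity $\partial_r\mathfrak{1}_{\{\widetilde{R}\le r\}}=-(\partial_\alpha\widetilde{R})^{-1}\partial_\alpha\mathfrak{1}_{\{\widetilde{R}\le r\}}$ allows an integration by parts in $\alpha$, so that $\partial_r{\bf m}^1$ becomes a genuine region integral $\iint\mathfrak{1}_{\{\widetilde{R}\le r\}}\partial_\alpha\bigl(\chi^{[1]}\gamma^2/\partial_\alpha\widetilde{R}\bigr)\,d\vartheta d\alpha$ with no Dirac mass at all; its $\mathcal{B}$-piece is $\bm\varrho_s$, its $\mathcal{B}^c$-piece joins $\bm\varrho_n$. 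Because $\chi^{[2]}\mathfrak{1}_\mathcal{B}=0$ (equation \eqref{M2OnlyNoise}), the piece $\partial_r{\bf m}^2$ where the Dirac is resolved explicitly (via Lemma \ref{LemDiracMeasureDensity}, integrating out $\alpha$ on $\mathcal{R}_0$ and $\vartheta$ on $\mathcal{R}_1,\mathcal{R}_2$, followed by FTC, exactly as you describe) lives entirely in $\mathcal{B}^c$. Your delta-by-FTC computation for $\bm\varrho_s$ is, up to this repackaging, the same calculation as the paper's bound on $\bm\varrho_s^1+\bm\varrho_s^2$, and your sketches for $\mathcal{R}_0$ and $\mathcal{R}_1$ match the paper's $M^{2,0}$ and $M^{2,1}$. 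The one place your one-step split would need an extra subdivision is $\mathcal{R}_2\cap\mathcal{B}^c$: the lower bound $\partial_\theta\widetilde{R}\gtrsim\hbar$ degenerates when $\hbar$ is small, and you must switch to resolving the Dirac in $\alpha$ wherever $\partial_\alpha\widetilde{R}\gtrsim t$ --- a split you gesture at but do not carry out. The paper's $\chi^{[1]}/\chi^{[2]}$ cutoff automates precisely this (the Dirac is only ever resolved where $\partial_\alpha\widetilde{R}$ is not of size $t$), and since the cutoff is smooth, the integration by parts produces no boundary terms, as it would against a sharp $\mathfrak{1}_{\mathcal{B}}$.
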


The key observation is that the estimate for $\bm\varrho_s$ only involves at most one copy of the large term $\Vert a\partial_a\gamma\Vert_{L^2_{\theta,a}}$.

\begin{proof}[Proof of Lemma \ref{LemDensity}]
We can decompose into two regions
\begin{equation*}
\begin{split}
{\bf m}(r,t)&={\bf m}^1(r,t)+{\bf m}^2(r,t),\qquad\chi^{[1]}(\vartheta,\alpha,t)=\varphi_{\ge1}(t^{-1}\partial_a\widetilde{R}(\vartheta,\alpha)),\qquad\chi^{[2]}=1-\chi^{[1]},\\
{\bf m}^j(r,t)&:=\iint \mathfrak{1}_{\{\widetilde{R}(\vartheta,\alpha)\le r\}}\chi^{[j]}(\vartheta,\alpha,t)\gamma^2(\vartheta,\alpha)d\vartheta d\alpha,
\end{split}
\end{equation*}
where $\varphi_{\ge1}(x)$ denotes a smooth function supported on $\{x\ge1/10\}$ and equal to $1$ for $x\ge 1/2$.

{\bf Study of $\partial_r{\bf m}^1$}. This contains the main term. Integrating by parts, we observe that
\begin{equation*}
\begin{split}
\partial_r{\bf m}^1(r,t)&=\iint \partial_r\left(\mathfrak{1}_{\{\widetilde{R}(\vartheta,\alpha)\le r\}}\right)\chi^{[1]}(\vartheta,\alpha,t)\gamma^2(\vartheta,\alpha)d\vartheta d\alpha\\
&=\iint -\frac{1}{\partial_\alpha\widetilde{R}(\vartheta,\alpha)}\partial_\alpha\left(\mathfrak{1}_{\{\widetilde{R}(\vartheta,\alpha)\le r\}}\right)\chi^{[1]}(\vartheta,\alpha,t)\gamma^2(\vartheta,\alpha)d\vartheta d\alpha\\
&=\iint \mathfrak{1}_{\{\widetilde{R}(\vartheta,\alpha)\le r\}} \partial_\alpha\left(\frac{\chi^{[1]}(\vartheta,\alpha,t)}{\partial_\alpha\widetilde{R}(\vartheta,\alpha)}\gamma^2(\vartheta,\alpha)\right)d\vartheta d\alpha= \bm\varrho^1_s(r,t)+\bm\varrho^2_s(r,t)+M^{1,1}+M^{1,2},\\
\end{split}
\end{equation*}
where
\begin{equation*}
\begin{split}
 \bm\varrho_s^1(r,t)&:=\iint \mathfrak{1}_{\mathcal{B}}\mathfrak{1}_{\{\widetilde{R}(\vartheta,\alpha)\le r\}} \partial_\alpha\left(\frac{\chi^{[1]}(\vartheta,\alpha,t)}{\partial_\alpha\widetilde{R}(\vartheta,\alpha)}\right)\gamma^2(\vartheta,\alpha)d\vartheta d\alpha,\\
M^{1,1}&:=\iint \mathfrak{1}_{\mathcal{B}^c}\mathfrak{1}_{\{\widetilde{R}(\vartheta,\alpha)\le r\}} \partial_\alpha\left(\frac{\chi^{[1]}(\vartheta,\alpha,t)}{\partial_\alpha\widetilde{R}(\vartheta,\alpha)}\right)\gamma^2(\vartheta,\alpha)d\vartheta d\alpha,\\
 \bm\varrho_s^2(r,t)&:=2\iint \mathfrak{1}_{\mathcal{B}}\mathfrak{1}_{\{\widetilde{R}(\vartheta,\alpha)\le r\}}\frac{\chi^{[1]}(\vartheta,\alpha,t)}{\partial_\alpha\widetilde{R}(\vartheta,\alpha)}\gamma(\vartheta,\alpha)\cdot\partial_\alpha\gamma(\vartheta,\alpha)d\vartheta d\alpha\\
M^{1,2}&:=2\iint \mathfrak{1}_{\mathcal{B}^c}\mathfrak{1}_{\{\widetilde{R}(\vartheta,\alpha)\le r\}}\frac{\chi^{[1]}(\vartheta,\alpha,t)}{\partial_\alpha\widetilde{R}(\vartheta,\alpha)}\gamma(\vartheta,\alpha)\cdot\partial_\alpha\gamma(\vartheta,\alpha)d\vartheta d\alpha.\\
\end{split}
\end{equation*}
From Lemma \ref{DerRLem} we recall that in the bulk region $\RR\sim at$, and thus
\begin{equation*}
\begin{split}
r^{-\kappa}\vert \bm\varrho^2_s(r,t)\vert&\lesssim \iint \mathfrak{1}_{\mathcal{B}}\frac{1}{\widetilde{R}^\kappa(\vartheta,\alpha)}\frac{\chi^{[1]}(\vartheta,\alpha,t)}{\partial_\alpha\widetilde{R}(\vartheta,\alpha)}\vert \gamma(\vartheta,\alpha)\cdot\partial_\alpha\gamma(\vartheta,\alpha)\vert d\vartheta d\alpha\lesssim t^{-\kappa-1}\Vert a\partial_a\gamma\Vert_{L^2_{\theta,a}}\Vert a^{-1-\kappa}\gamma\Vert_{L^2_{\theta,a}},
\end{split}
\end{equation*}
while on the other hand, using \eqref{EstimRa} and \eqref{DefBulkZone},
\begin{equation*}
\begin{split}
r^{-\kappa}\vert M^{1,2}\vert&\lesssim  \iint \mathfrak{1}_{\mathcal{B}^c}\alpha^{2\kappa}\frac{\chi^{[1]}(\vartheta,\alpha,t)}{\partial_\alpha\widetilde{R}(\vartheta,\alpha)}\vert \gamma(\vartheta,\alpha)\cdot\partial_\alpha\gamma(\vartheta,\alpha)\vert d\vartheta d\alpha\\
&\lesssim t^{-1-\kappa-\sigma}\Vert a\partial_a\gamma\Vert_{L^2_{\theta,a}}\left(\Vert a^{-2\kappa-1-4\sigma}\gamma\Vert_{L^2_{\theta,a}}+\Vert a^{\kappa-1}\theta^{\kappa+\sigma}\gamma\Vert_{L^2_{\theta,a}}\right).
\end{split}
\end{equation*}
Direct computations using Lemma \ref{DerRLem} show that
\begin{equation*}
\begin{split}
\left\vert \partial_\alpha\left(\frac{\chi^{[1]}(\vartheta,\alpha,t)}{\partial_\alpha\widetilde{R}(\vartheta,\alpha)}\right)\right\vert&\lesssim \left\vert \frac{\partial_a\partial_a\widetilde{R}(\vartheta,\alpha,t)}{(\partial_a\widetilde{R})^2(\vartheta,\alpha,t)}\right\vert \underline{\chi}^{[1]}(\vartheta,\alpha,t)\lesssim \frac{1}{\widetilde{R}(\vartheta,\alpha)}+\frac{1}{t\alpha }+\frac{\vert\vartheta\vert}{\alpha^2t^2}.
\end{split}
\end{equation*}
Separating the contribution of the bulk and outside as in the proof of Lemma \ref{lem:ControlOnEField}, we find that
\begin{equation*}
\begin{split}
r^{-\kappa}\vert \bm\varrho_s^{1}\vert&\lesssim \iint \mathfrak{1}_{\{\widetilde{R}(\vartheta,\alpha)\le r\}}\mathfrak{1}_{\mathcal{B}} \left(\frac{1}{\widetilde{R}(\vartheta,\alpha)}+\frac{1}{t\alpha }+\frac{\vert\vartheta\vert}{\alpha^2t^2}\right)\gamma^2(\vartheta,\alpha)\frac{d\vartheta d\alpha}{\widetilde{R}^\kappa(\vartheta,\alpha)}\lesssim t^{-1-\kappa}\Vert\alpha^{-\frac{1+\kappa}{2}}\gamma\Vert_{L^2_{\theta,a}}^2,
\end{split}
\end{equation*}
while using \eqref{EstimRa} and \eqref{DefBulkZone} yields
\begin{equation*}
\begin{split}
r^{-\kappa}\vert M^{1,1}\vert&\lesssim \iint \mathfrak{1}_{\{\widetilde{R}(\vartheta,\alpha)\le r\}}\mathfrak{1}_{\mathcal{B}^c} \left(\frac{1}{\widetilde{R}(\vartheta,\alpha)}+\frac{1}{t\alpha }+\frac{\vert\vartheta\vert}{\alpha^2t^2}\right)\gamma^2(\vartheta,\alpha)\frac{d\vartheta d\alpha}{\widetilde{R}^\kappa(\vartheta,\alpha)}\\
&\lesssim t^{-1-\kappa-\sigma}\norm{(a^{-\kappa-1-2\sigma}(1+a)+a^{\frac{1}{2}(\kappa-1-\sigma)}\abs{\theta}^{\frac{1}{2}(\kappa+\sigma)}(1+a\abs{\theta}^{\frac{1}{2}})\gamma}_{L^2_{\theta,a}}^2.
\end{split}
\end{equation*}

\medskip

{\bf Study of $\partial_r{\bf m}^2$}. We now consider
\begin{equation*}
\begin{split}
\partial_r{\bf m}^2(r,t)&=\iint \delta(\widetilde{R}(\vartheta,\alpha)-r)\chi^{[2]}(\vartheta,\alpha,t)\gamma^2(\vartheta,\alpha)d\vartheta d\alpha.\\
\end{split}
\end{equation*}
The main observation is that thanks to Lemma \ref{DerRLem}, we have that
\begin{equation}\label{M2OnlyNoise}
\chi^{[2]}\mathfrak{1}_{\mathcal{B}}=0.
\end{equation}
The Dirac measure restricts to the set studied in Lemma \ref{LemDiracMeasureDensity} and we decompose accordingly
\begin{equation*}
\begin{split}
\partial_r{\bf m}^2(r,t)&=M^{2,0}+M^{2,1}+M^{2,2},\\
M^{2,j}&:=\iint \delta(\widetilde{R}(\vartheta,\alpha)-r)\chi^{[2]}(\vartheta,\alpha,t)\mathfrak{1}_{\mathcal{R}_j}\gamma^2(\vartheta,\alpha)d\vartheta d\alpha.
\end{split}
\end{equation*}
and using \eqref{PropertiesR0}, we see that
\begin{equation*}
\begin{split}
0\le r^{-\kappa}M^{2,0}&= r^{-\kappa}\int \chi^{[2]}\mathfrak{1}_{\mathcal{R}_0}\gamma^2(\vartheta,\aleph)\frac{d\vartheta}{\partial_a\widetilde{R}(\vartheta,\aleph)}\lesssim \int \mathfrak{1}_{\mathcal{B}^c}\gamma^2(\vartheta,\aleph)\aleph^{3+2\kappa}d\vartheta.
\end{split}
\end{equation*}
Integrating $\gamma^2(\vartheta,\aleph)\aleph^{3+2\kappa}=\int_0^{\aleph}\partial_\alpha(\gamma^2(\vartheta,\alpha)\alpha^{3+2\kappa}) d\alpha$ from $0\le\alpha\le\aleph$ (note that $0\le \alpha\le a$ and $a\in\mathcal{B}^c$ implies that $\alpha\in\mathcal{B}^c$), we can estimate
\begin{equation*}
\begin{split}
0\le t^{\kappa+\sigma+1} r^{-\kappa}M^{2,0}&\lesssim t^{\kappa+\sigma+1}\iint \mathfrak{1}_{\mathcal{B}^c}\alpha^{2\kappa}\left(\alpha^2\gamma^2(\vartheta,\alpha)+\vert\gamma(\vartheta,\alpha)\cdot \alpha^3\partial_a\gamma(\vartheta,\alpha)\vert\right)\, d\vartheta d\alpha\\
&\lesssim \Vert a^{-\kappa-1-2\sigma}\gamma\Vert_{L^2_{\theta,a}}^2+\Vert a^{\frac{\kappa+1-\sigma}{2}}\theta^{\frac{\sigma+\kappa}{2}}\gamma\Vert_{L^2_{\theta,a}}^2\\
&\quad+\Vert a\partial_a\gamma\Vert_{L^2_{\theta,a}}\cdot\left(\Vert a^{-2\kappa-2-4\sigma}\gamma\Vert_{L^2_{\theta,a}}+\Vert a^{\kappa+1+\sigma}\theta^{\sigma}\gamma\Vert_{L^2_{\theta,a}}\right).
\end{split}
\end{equation*}
Using \eqref{CharacR1} and integrating over $\vartheta\geq \tau_1$, we see that
\begin{equation*}
\begin{split}
0\le M^{2,1}&= \int \chi^{[2]}\mathfrak{1}_{\mathcal{R}_1}\gamma^2(\tau_{1},\alpha)\frac{d\alpha}{\partial_\theta\widetilde{R}(\tau_{1},\alpha)}\\
&\lesssim (1+r^{-3}t^2)\int \mathfrak{1}_{\{\alpha^2r\ge q,\,\,\vert\tau_{1}\vert\gtrsim \alpha t+r \}}\gamma^2(\tau_{1},\alpha)d\alpha\\
&\lesssim (1+r^{-3}t^2)\iint \mathfrak{1}_{\{\alpha^2r\ge q,\,\,\vert\vartheta\vert\gtrsim \alpha t+r \}}\vert\gamma(\vartheta,\alpha)\partial_\theta\gamma(\vartheta,\alpha)\vert d\vartheta d\alpha\\
&\lesssim t^{-1-\kappa-\sigma}r^\kappa \Vert \partial_\theta\gamma\Vert_{L^2_{\theta,a}}\Vert (1+a^4\theta^2)\theta^{\kappa+\sigma+1} a^{\kappa-1-\sigma}\gamma\Vert_{L^2_{\theta,a}}.
\end{split}
\end{equation*}
Similarly, using \eqref{CharacR2} we obtain
\begin{equation*}
\begin{split}
0\le M^{2,2}&= \iint \chi^{[2]}\mathfrak{1}_{\mathcal{R}_2}\gamma^2(\tau_2,\alpha)\frac{d\alpha}{\partial_\theta\widetilde{R}(\tau_2,\alpha)}\\
&\lesssim (1+r^{-3}t^2)\int\mathfrak{1}_{\{\alpha^2 r\ge q\}}\mathfrak{1}_{\mathcal{B}^c}\gamma^2(\tau_2,\alpha)d\alpha\\
&\lesssim (1+r^{-3}t^2)\iint\mathfrak{1}_{\{\alpha^2 r\ge q\}}\mathfrak{1}_{\{\alpha\le t^{-\frac{1}{4}}\,\,\hbox{ or }\vert\vartheta\vert\ge t\alpha/2\}}\vert\gamma(\vartheta,\alpha)\partial_\theta\gamma(\vartheta,\alpha)\vert d\alpha\\
&\lesssim t^{-1-\kappa-\sigma}r^\kappa \Vert \partial_\theta\gamma\Vert_{L^2_{\theta,a}}\left[\norm{a^{-2\kappa-6-4\sigma}(1+a^5)\gamma}_{L^2_{\theta,a}}+\norm{a^{\kappa-1}\theta^{\kappa+\sigma}(1+\alpha^{4+\sigma})\gamma}_{L^2_{\theta,a}}\right].
\end{split}
\end{equation*}
This finishes the proof with $\bm\varrho_s=\bm\varrho_s^1+\bm\varrho_s^2$ and $\bm\varrho_n=M^{1,1}+M^{1,2}+M^{2,0}+M^{2,1}+M^{2,2}$.
\end{proof}

\begin{remark}\label{rem:aposweights}
As can be seen from the proof of \ref{LemDensity}, we only need positive moments in $a$ to control the area outside of the bulk where $\abs{\theta}\geq a t$. Such moments in $a$ could be replaced by moments in $\theta$, and thus positive weights in $a$ are not necessary for our result.
\end{remark}

\begin{proposition}\label{ControlSecondDerPsi}

There holds that
\begin{equation}\label{ControlSecondDerPsiEst1}
\begin{split}
t^\frac{3}{2}\vert\partial_\theta\partial_a\PPsi\vert+t^2(1+a^{-2})\vert\partial_\theta\partial_\theta\PPsi\vert&\lesssim N_1, \\
\end{split}
\end{equation}
and
\begin{equation}\label{ControlSecondDerPsiEst2}
\begin{split}
\frac{ta^2}{1+a^2}\vert\partial_a\partial_a\PPsi\vert&\lesssim \Vert a\partial_a\gamma\Vert_{L^2_{\theta,a}}\Vert a^{-1}\gamma\Vert_{L^2_{\theta,a}}+\Vert (a^2+a^{-2})\gamma\Vert_{L^2_{\theta,a}}^2+t^{-\frac{1}{3}}N_1
\end{split}
\end{equation}
where
\begin{equation}\label{DefN1}
\begin{split}
N_1&:=\Vert (a^{-20}+a^{20}+\theta^{20})\gamma\Vert_{L^2_{\theta,a}}^2+\Vert a\partial_a\gamma\Vert_{L^2_{\theta,a}}^2+\Vert\partial_\theta\gamma\Vert_{L^2_{\theta,a}}^2.
\end{split}
\end{equation}

\end{proposition}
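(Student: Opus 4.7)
My plan is to start from the identity \eqref{DerPPsi} which expresses each second derivative of $\PPsi$ as
\begin{equation*}
\partial_\alpha\partial_\beta\PPsi=-\frac{{\bf m}(\widetilde{R})}{\widetilde{R}^2}\left(\partial_\alpha\partial_\beta\widetilde{R}-2\frac{\partial_\alpha\widetilde{R}\,\partial_\beta\widetilde{R}}{\widetilde{R}}\right)-\bm\varrho(\widetilde{R})\cdot\frac{\partial_\alpha\widetilde{R}\,\partial_\beta\widetilde{R}}{\widetilde{R}^2},
\end{equation*}
and then substitute the geometric bounds on derivatives of $\widetilde{R}$ from Lemma \ref{DerRLem}, the mass bounds of Lemma \ref{lem:ControlOnEField}, and the density bounds of Lemma \ref{LemDensity}. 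For each choice of $(\alpha,\beta)\in\{(\theta,\theta),(\theta,a),(a,a)\}$ I would optimize the parameters $\ell,\kappa,\sigma$ in the latter two lemmas so as to match the weights on the left-hand side and to absorb the remainders into $N_1$ from \eqref{DefN1}.

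For $\partial_\theta\partial_\theta\PPsi$ the derivative bounds $|\partial_\theta\widetilde{R}|\le 1$ and $|\partial_\theta^2\widetilde{R}|\lesssim\widetilde{R}^{-1}$ reduce matters to controlling ${\bf m}/\widetilde{R}^3$ and $\bm\varrho/\widetilde{R}^2$. Applying Lemma \ref{lem:ControlOnEField} with $\ell=3$ gives ${\bf m}/\widetilde{R}^3\lesssim t^{-3}(\ldots)$, and interpolating against the universal lower bound $\widetilde{R}\gtrsim q/a^2$ from \eqref{EstimRa} produces the factor $(1+a^{-2})/t^2$. Lemma \ref{LemDensity} with $\kappa=2$ and any $\sigma>0$ gives the density contribution. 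The $\partial_\theta\partial_a\PPsi$ estimate proceeds similarly, now using $|\partial_a\widetilde{R}|\lesssim t+qa^{-3}\ln\langle\cdot\rangle$ and $|\partial_\theta\partial_a\widetilde{R}|\lesssim (t+a^{-3})/\widetilde{R}$, with Lemma \ref{lem:ControlOnEField} at $\ell=5/2$ and Lemma \ref{LemDensity} at $\kappa=3/2$, giving the $t^{-3/2}$ gain.

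The main technical obstacle is the $\partial_a\partial_a\PPsi$ estimate. The bound
\begin{equation*}
|\partial_a^2\widetilde{R}|\lesssim\frac{t^2}{a^2\widetilde{R}^2}+\frac{t}{a^3\widetilde{R}}+\frac{q}{a^4}\ln\langle\tfrac{a^2}{q}\widetilde{R}\rangle
\end{equation*}
together with $|\partial_a\widetilde{R}|\lesssim t+qa^{-3}\ln\langle\cdot\rangle$ produces a worst term $\bm\varrho(\partial_a\widetilde{R})^2/\widetilde{R}^2\sim t^2\bm\varrho/\widetilde{R}^2$. In the bulk this is of size $\bm\varrho/a^2$, and obtaining the desired $(1+a^2)/(ta^2)$ weight requires one to extract $t^{-1}$ from $\bm\varrho$ \emph{without} squaring the large norm $\|a\partial_a\gamma\|_{L^2}$. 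This is precisely what the $\bm\varrho_s$ part of Lemma \ref{LemDensity} provides (with $\kappa=0$), yielding $t^{-1}[\|a\partial_a\gamma\|_{L^2}\|a^{-1}\gamma\|_{L^2}+\|a^{-1/2}\gamma\|_{L^2}^2]$, which matches the first two terms in \eqref{ControlSecondDerPsiEst2}. The mass contribution ${\bf m}/\widetilde{R}^2\cdot\partial_a^2\widetilde{R}$ produces the $\|(a^2+a^{-2})\gamma\|_{L^2}^2$ term after using Lemma \ref{lem:ControlOnEField} with $\ell$ close to $2$ and interpolating. Finally, the $\bm\varrho_n$ contributions and all other remainders are estimated via Lemma \ref{LemDensity} with a small free parameter $\sigma>0$ chosen to deliver a $t^{-1/3}$ margin; the resulting weighted moments in $a^{\pm}$ and $\theta$ are comfortably absorbed into $N_1$ given the generous range of moments it controls.

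In summary, once the three geometric/analytic ingredients are in hand the proof is essentially bookkeeping on parameters; the one genuine subtlety is the $\partial_a^2\PPsi$ estimate, where the $t^2$ growth of $\partial_a^2\widetilde{R}$ would break scaling under a naive application of the uniform mass bound ${\bf m}\le\|\gamma\|_{L^2}^2$, and only the \emph{linear} (rather than quadratic) dependence of $\bm\varrho_s$ on $\|a\partial_a\gamma\|_{L^2}$ in Lemma \ref{LemDensity}, together with the bulk/non-bulk splitting reflected in $N_1$, makes the claimed bound possible.
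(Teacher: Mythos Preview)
Your proposal is correct and follows essentially the same route as the paper: start from \eqref{DerPPsi}, insert the derivative bounds of Lemma \ref{DerRLem} together with \eqref{EstimRa}, and then feed the resulting expressions into Lemmas \ref{lem:ControlOnEField} and \ref{LemDensity} with suitably chosen exponents. In particular you have correctly isolated the only genuine subtlety, namely that for $\partial_a\partial_a\PPsi$ the worst term $t^2\bm\varrho/\widetilde{R}^2$ must be handled through the $\bm\varrho_s$ part of Lemma \ref{LemDensity}, whose \emph{linear} dependence on $\Vert a\partial_a\gamma\Vert_{L^2}$ is exactly what produces the first term on the right of \eqref{ControlSecondDerPsiEst2}; the paper organizes this step by first writing $a^2(\partial_a\widetilde{R})^2/\widetilde{R}^2\lesssim 1+a^2t^2/\widetilde{R}^2$ globally (not just in the bulk) and then applying Lemma \ref{LemDensity} with $\kappa=0$ and $\kappa=2$, which is a slightly cleaner packaging of the same idea you describe.
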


\begin{proof}[Proof of Proposition \ref{ControlSecondDerPsi}]

The most important term is the term with mixed derivative (see Section \ref{EstimGowthNormDer}). We recall from \eqref{DerPPsi} that
\begin{equation*}
\begin{split}
\partial_\theta\partial_a\PPsi=-\frac{{\bf m}(\widetilde{R})}{\widetilde{R}^2}\left(\partial_\theta\partial_a\widetilde{R}-2\frac{\partial_\theta\widetilde{R}\partial_a\widetilde{R}}{\widetilde{R}}\right)-\bm\varrho(\widetilde{R})\frac{\partial_\theta\widetilde{R}\partial_a\widetilde{R}}{\widetilde{R}^2}.
\end{split}
\end{equation*}
On the one hand, using Lemma \ref{DerRLem}, we see that
\begin{equation*}
\begin{split}
\left\vert\frac{{\bf m}(\widetilde{R})}{\widetilde{R}^2}\left(\partial_\theta\partial_a\widetilde{R}-2\frac{\partial_\theta\widetilde{R}\partial_a\widetilde{R}}{\widetilde{R}}\right)\right\vert&\lesssim t\frac{{\bf m}(\widetilde{R},t)}{\widetilde{R}^3}+\frac{1}{(a^2\widetilde{R})^\frac{3}{2}}\frac{{\bf m}(\widetilde{R},t)}{\widetilde{R}^\frac{3}{2}}+\frac{1}{(a^2\widetilde{R})^\frac{1}{2}}\frac{{\bf m}(\widetilde{R},t)}{\widetilde{R}^\frac{3}{2}},\\
\left\vert\frac{\partial_\theta\widetilde{R}(\theta,a)\partial_a\widetilde{R}(\theta,a)}{\widetilde{R}^2(\theta,a)}\right\vert&\lesssim \frac{1}{(a^2\widetilde{R}(\theta,a))^\frac{1}{2}}\frac{1}{\widetilde{R}^\frac{1}{2}(\theta,a)}+\frac{t}{\widetilde{R}^2(\theta,a)},
\end{split}
\end{equation*}
and this leads to an acceptable contribution using Lemma \ref{lem:ControlOnEField} and Lemma \ref{LemDensity}. We now turn to
\begin{equation*}
\begin{split}
\partial_\theta\partial_\theta\PPsi=-\frac{{\bf m}(\widetilde{R})}{\widetilde{R}^2}\left(\partial_\theta\partial_\theta\widetilde{R}-2\frac{(\partial_\theta\widetilde{R})^2}{\widetilde{R}}\right)-\bm\varrho(\widetilde{R})\frac{(\partial_\theta\widetilde{R})^2}{\widetilde{R}^2}.
\end{split}
\end{equation*}
Using Lemma \ref{DerRLem} and \eqref{EstimRa}, we see that
\begin{equation*}
\begin{split}
\vert\partial_\theta\partial_\theta\widetilde{R}\vert+\frac{\vert\partial_\theta\widetilde{R}\vert^2}{\widetilde{R}}\lesssim \widetilde{R}^{-1}(\theta,a),\qquad a^{-2}\vert\partial_\theta\partial_\theta\widetilde{R}\vert+a^{-2}\frac{\vert\partial_\theta\widetilde{R}\vert^2}{\widetilde{R}}\lesssim 1,
\end{split}
\end{equation*}
and this term can be handled as before using Lemma \ref{lem:ControlOnEField} and Lemma \ref{LemDensity}. Finally, we compute that
\begin{equation*}
\begin{split}
\partial_a\partial_a\PPsi=-\frac{{\bf m}(\widetilde{R})}{\widetilde{R}^2}\left(\partial_a\partial_a\widetilde{R}-2\frac{(\partial_a\widetilde{R})^2}{\widetilde{R}}\right)-\bm\varrho(\widetilde{R})\frac{(\partial_a\widetilde{R})^2}{\widetilde{R}^2}.
\end{split}
\end{equation*}
Using \eqref{BdRTFirstDer} and \eqref{BdRTSecondDer}, we find that
\begin{equation*}
\begin{split}
a^2\vert\partial_a\partial_a\widetilde{R}\vert+a^2\vert\frac{(\partial_a\widetilde{R})^2}{\widetilde{R}}\vert&\lesssim\frac{t^2}{\widetilde{R}^2}+\frac{t}{a\widetilde{R}}+\frac{\ln\langle \frac{a^2}{q}\widetilde{R}\rangle}{\frac{a^2}{q}\widetilde{R}}\widetilde{R}+\left(\frac{\ln\langle\frac{a^2}{q}\widetilde{R}\rangle}{\frac{a^2}{q}\widetilde{R}}\right)^2\widetilde{R}+a^2\frac{t^2}{\widetilde{R}}\\
&\lesssim (1+a^2)\left(1+\widetilde{R}(\theta,a)+t^2/\widetilde{R}\right)
\end{split}
\end{equation*}
and that
\begin{equation*}
\begin{split}
a^2\frac{(\partial_a\widetilde{R})^2}{\widetilde{R}^2}&\lesssim a^2\frac{t^2}{\widetilde{R}^2}+\left(\frac{\ln\langle\frac{a^2}{q}\widetilde{R}\rangle}{\frac{a^2}{q}\widetilde{R}}\right)^2\lesssim 1+\frac{a^2t^2}{\widetilde{R}^2}.
\end{split}
\end{equation*}
Using Lemma \ref{lem:ControlOnEField}, we find that
\begin{equation*}
\begin{split}
a^2\frac{{\bf m}(\widetilde{R})}{\widetilde{R}^2}\left\vert\partial_a\partial_a\widetilde{R}-2\frac{(\partial_a\widetilde{R})^2}{\widetilde{R}}\right\vert&\lesssim \frac{1+a^2}{t}\left(\Vert (a^{-\frac{1}{2}}+a^{-\frac{3}{2}})\gamma\Vert_{L^2_{\theta,a}}+t^{-\frac{1}{4}}\Vert (a^{-\frac{7}{2}}+\theta^{\frac{3}{2}}+\theta^{\frac{7}{2}})\gamma\Vert_{L^2_{\theta,a}}^2\right)
\end{split}
\end{equation*}
while using Lemma \ref{LemDensity}, we find that
\begin{equation*}
\begin{split}
a^2\bm\varrho(\widetilde{R})\frac{(\partial_a\widetilde{R})^2}{\widetilde{R}^2}
&\lesssim \left(1+\frac{a^2t^2}{\widetilde{R}^2}\right)\bm\varrho(\widetilde{R}(\theta,a))\\
&\lesssim \frac{1+a^2}{t}\left[\Vert a\partial_a\gamma\Vert_{L^2_{\theta,a}}\Vert (a^{-1}+a^{-3})\gamma\Vert_{L^2_{\theta,a}}+\Vert(a^{-\frac{1}{2}}+a^{-\frac{3}{2}})\gamma\Vert_{L^2_{\theta,a}}^2\right]
+\frac{1+a^2}{t^\frac{4}{3}}N_1.
\end{split}
\end{equation*}
This finishes the proof.
\end{proof}

\section{Nonlinear analysis}\label{SecNLin}

In this section we consider the full nonlinear equation \eqref{eq:VPPoisson},
\begin{equation}
 \partial_t\gamma=\lambda \{\PPsi,\gamma\}.
\end{equation}
We first establish global existence of strong solutions via a bootstrap in Section \ref{SSecBootstap}, then we demonstrate the modified scattering asymptotics in Section \ref{ModifiedScat}. This establishes Proposition \ref{prop:deriv_boot} and Theorem \ref{thm:mainfull}.

\subsection{Bootstraps and global existence}\label{SSecBootstap}

We first propagate global bounds using energy estimates. The key property we will use is that the integral of a Poisson bracket vanishes. Commuting with appropriate operators gives the equations
\begin{equation}\label{CommEq1}
\begin{split}
\partial_t(a^p\gamma)-\lambda\left\{\PPsi,a^p\gamma\right\}&=-\frac{p\lambda}{a}\{\PPsi,a\}a^p\gamma=p\lambda\partial_\theta\PPsi\cdot a^{p-1}\gamma,\\
\partial_t(\theta^p\gamma)-\lambda\left\{\PPsi,\theta^p\gamma\right\}&=-p\lambda\partial_a\PPsi\cdot \theta^{p-1}\gamma,\\
\partial_t(\partial_\beta\gamma)-\lambda\left\{\PPsi,\partial_\beta\gamma\right\}&=\lambda\{\partial_\beta\PPsi,\gamma\},\qquad\beta\in\{\theta,a\}.
\end{split}
\end{equation}

The key in the bootstrap estimates is that one can propagate $a$ moments easily and that the terms with slowest decay involve only these moments (see ${\bf m}_s$ in \eqref{DecMass} and {$\bm\varrho_s$} in \eqref{DecRho}). Interestingly, we will see in Section \ref{MomentSSSec} that the moments can be bootstrapped on their own, allowing global bounds on weak solutions. These moment bounds allow to propagate another bootstrap for higher regularity. For simplicity, we only propagate the first order derivatives in Section \ref{BootDerSSSec}.

\subsubsection{Moment Bootstrap}\label{MomentSSSec}

It turns out that control of the moments can be bootstrapped independently of any derivative bound.

\begin{lemma}\label{MomBdsLem}

Let $p\ge2$ and assume that $\gamma$ solves \eqref{eq:VPPoisson} on an interval $0\le t\le T$ and assume that
\begin{equation}\label{BootAssMom}
\begin{split}
\Vert\left(a^{-3p}+a^p+\theta^p\right)\gamma(t=0)\Vert_{L^2_{\theta,a}}&\le\varepsilon_0,\\
\Vert\left(a^{-3p}+a^p+\theta^p\right)\gamma(t)\Vert_{L^2_{\theta,a}}&\le\varepsilon_1\langle t\rangle^\delta
\end{split}
\end{equation}
then there holds that
\begin{equation*}
\begin{split}
\Vert \left(a^{-3p}+a^p\right)\gamma(t)\Vert_{L^2_{\theta,a}}&\lesssim\varepsilon_0,\qquad
\Vert \theta^p\gamma(t)\Vert_{L^2_{\theta,a}}\lesssim \varepsilon_0+\varepsilon_1\langle t\rangle^{\varepsilon_0}.
\end{split}
\end{equation*}

\end{lemma}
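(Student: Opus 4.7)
The plan is to perform energy estimates on the commuted equations \eqref{CommEq1}. The key observation is the antisymmetry of the Poisson bracket: for any weight $W$, one has $\int W\{\PPsi,W\}\, d\theta da = \tfrac{1}{2}\int \{\PPsi, W^2\}\, d\theta da = 0$ (equivalently, the Hamiltonian vector field $\{\PPsi,\cdot\}$ is divergence-free in $(\theta,a)$). Applied to $W=\gamma$ itself, this shows that $\|\gamma(t)\|_{L^2_{\theta,a}} = \|\gamma_0\|_{L^2_{\theta,a}}$ is conserved, and since $1\le a^{-3p}+a^p$ for all $a>0$, the hypothesis \eqref{BootAssMom} forces $\|\gamma(t)\|_{L^2_{\theta,a}} \lesssim \varepsilon_0$. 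This small conserved mass is the decisive ingredient.

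For the $a$-weighted moments, multiply the first line of \eqref{CommEq1} by $a^p\gamma$ (and its $p\mapsto -3p$ analog by $a^{-3p}\gamma$) and integrate. The Poisson bracket contribution vanishes, leaving
\begin{equation*}
\frac{1}{2}\frac{d}{dt}\|a^p\gamma\|_{L^2}^2 = p\lambda \int \frac{\partial_\theta\PPsi}{a}(a^p\gamma)^2\,d\theta da \le p|\lambda|\Big\|\frac{\partial_\theta\PPsi}{a}\Big\|_{L^\infty}\|a^p\gamma\|_{L^2}^2,
\end{equation*}
and analogously for $\|a^{-3p}\gamma\|_{L^2}^2$. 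Using \eqref{BdsDThetaPsi} together with the bootstrap assumption (the norms $\|a^{-3/4}\gamma\|,\|a^{-2}\gamma\|,\|\theta^2\gamma\|$ appearing there are controlled by interpolating between $\|a^{-3p}\gamma\|$, $\|\gamma\|$, and $\|\theta^p\gamma\|$ for $p\ge 2$), one gets $\|\partial_\theta\PPsi/a\|_{L^\infty}\lesssim \varepsilon_1^2\langle t\rangle^{2\delta-3/2}$, which is integrable in $t$ provided $\delta$ is small. Gronwall then yields $\|a^p\gamma(t)\|_{L^2},\|a^{-3p}\gamma(t)\|_{L^2}\lesssim \varepsilon_0$.

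For the $\theta^p$ moment, the same energy identity produces
\begin{equation*}
\frac{1}{2}\frac{d}{dt}\|\theta^p\gamma\|_{L^2}^2 \le p|\lambda|\int |\partial_a\PPsi|\cdot |\theta|^{2p-1}\gamma^2\,d\theta da.
\end{equation*}
This is the delicate step. The bound \eqref{BdsDaPsi} contains a linearly growing term $\|\gamma\|_{L^2}^2\,|\vartheta|/t$; critically, its prefactor is $\|\gamma\|_{L^2}^2\lesssim \varepsilon_0^2$, not $\varepsilon_1^2$, thanks to $L^2$-conservation. Its contribution is thus
\begin{equation*}
\lesssim \varepsilon_0^2 t^{-1}\int |\theta|^{2p}\gamma^2\,d\theta da = \varepsilon_0^2 t^{-1}\|\theta^p\gamma\|_{L^2}^2.
\end{equation*}
The remaining pieces of $|\partial_a\PPsi|$ carry coefficient $\varepsilon_1^2\langle t\rangle^{2\delta}$ with either an extra $t^{-1/4}$ gain or an inner weight $a^{-3}$ (controlled by $a^{-3p}\gamma$ via the bound just established, for $p\ge 1$); after Cauchy--Schwarz and interpolation they produce a source $\lesssim \varepsilon_1^3 t^{-1-\kappa}$ for some $\kappa>0$. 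Altogether
\begin{equation*}
\frac{d}{dt}\|\theta^p\gamma\|_{L^2}^2 \lesssim \varepsilon_0^2\,t^{-1}\|\theta^p\gamma\|_{L^2}^2 + \varepsilon_1^3 t^{-1-\kappa},
\end{equation*}
and Gronwall's inequality yields $\|\theta^p\gamma(t)\|_{L^2}^2\lesssim (\varepsilon_0^2+\varepsilon_1^2)\langle t\rangle^{C\varepsilon_0^2}$. For $\varepsilon_0$ small enough that $C\varepsilon_0^2\le\varepsilon_0$, this is the stated bound.

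The main obstacle is the $\theta^p$ estimate: one must recognize that the coefficient of the linearly growing term in \eqref{BdsDaPsi} is $\|\gamma\|_{L^2}^2$, hence controlled by the small conserved base mass rather than by the bootstrap-sized norm. Without this observation the Gronwall exponent would be proportional to $\varepsilon_1$, not $\varepsilon_0$, and the bootstrap would not close. The bookkeeping of the remaining remainder terms, while routine, truly requires $p\ge 2$ so that the intermediate weights arising in \eqref{BdsDaPsi} are covered by interpolation between the bootstrapped moments $\|a^{-3p}\gamma\|$, $\|\gamma\|$, and $\|\theta^p\gamma\|$.
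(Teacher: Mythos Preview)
Your overall strategy and key insight---energy estimates on \eqref{CommEq1}, Gronwall, and recognizing that the prefactor of the dangerous $|\vartheta|/t$ term in \eqref{BdsDaPsi} is the conserved $\|\gamma\|_{L^2}^2\lesssim\varepsilon_0^2$---match the paper's proof. The bookkeeping of the other terms in \eqref{BdsDaPsi}, however, contains an error.

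Recall $t\,|\partial_a\PPsi| \lesssim \|a^{-1}\gamma\|^2 + \|\gamma\|^2(|\vartheta|+\alpha^{-3}) + t^{-1/4}\|(\ldots)\gamma\|^2$. Besides the $|\vartheta|$ piece and the genuinely integrable $t^{-1/4}$ piece, there remain $\|a^{-1}\gamma\|^2/t$ and $\|\gamma\|^2\alpha^{-3}/t$. Neither produces an integrable source $\varepsilon_1^3 t^{-1-\kappa}$ as you claim: the $\alpha^{-3}$ weight yields no time decay whatsoever, and the first term is not covered by your dichotomy at all. Your assertion that they carry coefficient $\varepsilon_1^2\langle t\rangle^{2\delta}$ is also wrong---their prefactors are $\|a^{-1}\gamma\|^2$ and $\|\gamma\|^2$, both $\lesssim\varepsilon_0^2$ by the $a$-moment bound you just established. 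The fix is precisely the mechanism you already used for the $|\vartheta|$ term: these pieces also carry the small factor $\varepsilon_0^2/t$ and feed the Gronwall coefficient, not the source. After Cauchy--Schwarz and Young's inequality $a^{-6}\theta^{2(p-1)}\le \tfrac{1}{p}a^{-6p}+\tfrac{p-1}{p}\theta^{2p}$ one has $\|a^{-3}\theta^{p-1}\gamma\|\lesssim \|a^{-3p}\gamma\|+\|\theta^p\gamma\|$, so the total contribution is $\lesssim \varepsilon_0^2 t^{-1}\|\theta^p\gamma\|^2 + \varepsilon_0^3 t^{-1}\|\theta^p\gamma\|$, and Gronwall closes as before. (This interpolation is exactly why the hypothesis carries the negative weight $a^{-3p}$ rather than a smaller power.)
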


\begin{proof}[Proof of Lemma \ref{MomBdsLem}]

The moments can be readily estimated.
By \eqref{CommEq1} we have that, for $q\in\mathbb{R}$
\begin{equation*}
 \frac{1}{2}\frac{d}{dt}\norm{a^q\gamma}_{L^2_{\theta,a}}^2\lesssim\Vert a^{-1}\partial_\theta\PPsi\Vert_{L^\infty}\norm{a^q\gamma}_{L^2_{\theta,a}}^2.
\end{equation*}
Using \eqref{BdsDThetaPsi}, the bootstrap assumptions \eqref{BootAssMom} and Gronwall inequality, we find that
\begin{equation}\label{GlobalBdMoma}
\begin{split}
\Vert a^q\gamma(t)\Vert_{L^2_{\theta,a}}^2&\lesssim \Vert a^q\gamma(0)\Vert_{L^2_{\theta,a}}^2\lesssim\varepsilon_0^2.
\end{split}
\end{equation}
Similarly, for $q\ge0$, using \eqref{CommEq1} and \eqref{BdsDaPsi}, we find that
\begin{equation*}
\begin{split}
\frac{1}{2}\frac{d}{dt}\norm{\theta^q\gamma}_{L^2_{\theta,a}}^2&\lesssim\iint \vert\partial_a\PPsi(\theta,a)\vert\cdot \vert\theta\vert^q\gamma\cdot\vert\theta\vert^{q-1}\gamma \,dad\theta\\
&\lesssim t^{-1}\Vert (1+a^{-1})\gamma\Vert_{L^2_{\theta,a}}^2\cdot\left[\Vert \theta^q\gamma\Vert_{L^2_{\theta,a}}^2+\Vert a^{-3}\theta^{q-1}\gamma\Vert_{L^2_{\theta,a}}\Vert \theta^q\gamma\Vert_{L^2_{\theta,a}}\right]\\
&\quad+t^{-\frac{5}{4}}\Vert\theta^2\gamma\Vert_{L^2_{\theta,a}}\Vert \theta^q\gamma\Vert_{L^2_{\theta,a}}\Vert \theta^{q-1}\gamma\Vert_{L^2_{\theta,a}}
\end{split}
\end{equation*}
and we can again apply Gronwall estimate.
 \end{proof}

\subsubsection{Control on the derivatives}\label{BootDerSSSec}

We now show that we can obtain strong solutions by bootstrapping control of derivatives. It turns out that we will also need some moments of first derivatives. Given a weight function $\omega(\theta,a)$, we define
\begin{equation*}
\omega^{(1)}(\theta,a):=(a+a^{-1})\omega(\theta,a),\qquad\omega^{(2)}(\theta,a):=a\omega(\theta,a),
\end{equation*}
and we compute that
\begin{equation}\label{WeightEvolEq}
\begin{split}
\partial_t(\omega^{(1)}\gamma_\theta)-\lambda\{\PPsi,\omega^{(1)}\gamma_\theta\}-\lambda\partial_\theta\partial_a\PPsi\cdot\omega^{(1)}\gamma_\theta+\lambda\frac{a+a^{-1}}{a}\partial_\theta\partial_\theta\PPsi\cdot\omega^{(2)}\gamma_a&=\frac{\lambda}{a}\partial_\theta\PPsi \cdot a\partial_a\omega^{(1)}\gamma_\theta\\
&\quad -\lambda\partial_a\PPsi\cdot\partial_\theta\omega^{(1)}\gamma_\theta,\\
\partial_t(\omega^{(2)}\gamma_a)-\lambda\{\PPsi,\omega^{(2)}\gamma_a\}+\lambda\partial_\theta\partial_a\PPsi\cdot\omega^{(2)}\gamma_a-\lambda\frac{a^2}{1+a^2}\partial_a\partial_a\PPsi\cdot\omega^{(1)}\gamma_\theta&=\frac{\lambda}{a}\partial_\theta\PPsi \cdot a\partial_a\omega^{(2)}\gamma_a\\
&\quad-\lambda\partial_a\PPsi\cdot\partial_\theta\omega^{(2)}\gamma_a.\\
\end{split}
\end{equation}
We will need this when
\begin{equation}\label{CompatibleSet}
\omega\in\mathcal{I}_{0}:=\{1,\,\alpha^{-3},\,\alpha^{-6},\,\theta,\,\theta\alpha^{-3},\,\theta^2\}.
\end{equation}
More generally, one can consider $\omega_{p,q}:=\theta^pa^q$ for $p\in\mathbb{N}_{0}$, and $a\in\mathbb{R}$ and then the properties we need are that
\begin{equation}\label{WeightProp}
\vert\omega^{(1)}_{p,q}\vert\ge \vert\omega_{p,q}\vert,\qquad\vert a\partial_a\omega^{(j)}_{p,q}\vert\lesssim \vert\omega^{(j)}_{p,q}\vert,\qquad \vert\partial_\theta\omega^{(j)}_{p,q}\vert\lesssim p\omega^{(j)}_{p-1,q},
\end{equation}
and that the list of weights $\mathcal{I}=\{\omega_{p,q}\}_{p,q}$ that we consider satisfies the induction property
\begin{equation}\label{InductionProperty}
\begin{split}
\omega_{p,q}\in \mathcal{I}\,\,\Rightarrow \omega_{p-1,q-3},\omega_{p-1,q}\in\mathcal{I}.
\end{split}
\end{equation}
We call such sets of weights {\it compatible}.

\begin{proposition}\label{BootstrapDer}

Let $\mathcal{I}$ be a compatible list. Assume that $\gamma$ solves \eqref{eq:VPPoisson} for $0\le t\le T$ and satisfies for any weight
\begin{equation}\label{AssBoot1}
\begin{split}
\Vert \omega^{(1)}_{p,q}\partial_\theta\gamma(t=0)\Vert_{L^2_{\theta,a}}+\Vert \omega^{(2)}_{p,q}\partial_a\gamma(t=0)\Vert_{L^2_{\theta,a}}+\Vert \left(a^{20}+a^{-20}+\theta^{20}\right)\gamma(t=0)\Vert_{L^2_{\theta,a}}&\le\varepsilon_0,\\
\Vert \left(a^{20}+a^{-20}+\theta^{20}\right)\gamma(t)\Vert_{L^2_{\theta,a}}&\le \varepsilon_1\langle t\rangle^\delta,\\
\Vert \omega^{(1)}_{p,q}\partial_\theta\gamma(t)\Vert_{L^2_{\theta,a}}+\Vert \omega^{(2)}_{p,q}\partial_a\gamma(t)\Vert_{L^2_{\theta,a}}&\le \varepsilon_1\langle t\rangle^{(p+1)\delta}.
\end{split}
\end{equation}
Then the following stronger bounds hold for any weights
\begin{equation}\label{BootDer}
\begin{split}
\Vert \omega^{(1)}_{p,q}\partial_\theta\gamma\Vert_{L^2_{\theta,a}}&\lesssim \varepsilon_0+\varepsilon_1^\frac{3}{2}\langle t\rangle^{p\delta},\qquad
\Vert \omega^{(2)}_{p,q}\partial_a\gamma\Vert_{L^2_{\theta,a}}\lesssim\varepsilon_0+\varepsilon_1^\frac{3}{2}\langle t\rangle^{(p+1)\delta}.
\end{split}
\end{equation}

\end{proposition}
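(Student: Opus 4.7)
The plan is to prove \eqref{BootDer} by induction on $p$, using energy estimates on the weighted equations \eqref{WeightEvolEq} together with the pointwise bounds of Propositions \ref{BoundsOnE} and \ref{ControlSecondDerPsi}, and invoking the moment bootstrap of Lemma \ref{MomBdsLem} to supply the crucial smallness $\Vert a^{-1}\gamma\Vert_{L^2}\lesssim \varepsilon_0$. The compatibility property \eqref{InductionProperty} is designed so that the auxiliary weights arising from the forcing terms are already controlled at the previous induction level.

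Fix $\omega=\omega_{p,q}\in\mathcal{I}$ and assume \eqref{BootDer} holds for all $\omega_{p',q'}\in\mathcal{I}$ with $p'<p$. Pairing the first equation of \eqref{WeightEvolEq} with $\omega^{(1)}\gamma_\theta$ and integrating, the Poisson bracket contribution vanishes since $\int\{\PPsi,F\}\,F\,d\theta da=\tfrac{1}{2}\int\{\PPsi,F^2\}\,d\theta da=0$, giving the energy identity
\begin{equation*}
\tfrac{1}{2}\tfrac{d}{dt}\Vert\omega^{(1)}\gamma_\theta\Vert_{L^2}^2 = \lambda\int \partial_\theta\partial_a\PPsi\cdot(\omega^{(1)}\gamma_\theta)^2 - \lambda\int (1+a^{-2})\partial_\theta^2\PPsi\cdot\omega^{(2)}\gamma_a\,\omega^{(1)}\gamma_\theta + \int \mathcal{R}_1\cdot\omega^{(1)}\gamma_\theta,
\end{equation*}
with an analogous identity for $\Vert\omega^{(2)}\gamma_a\Vert_{L^2}^2$. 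Proposition \ref{ControlSecondDerPsi} and \eqref{AssBoot1} yield $\vert\partial_\theta\partial_a\PPsi\vert\lesssim \varepsilon_1^2\, t^{-3/2+2\delta}$ and $(1+a^{-2})\vert\partial_\theta^2\PPsi\vert\lesssim \varepsilon_1^2\, t^{-2+2\delta}$, both time-integrable; combined with Cauchy-Schwarz these produce contributions to $X_p(t):=\Vert\omega^{(1)}\gamma_\theta\Vert_{L^2}+\Vert\omega^{(2)}\gamma_a\Vert_{L^2}$ of the expected order $\varepsilon_1^{3/2}\langle t\rangle^{(p+1)\delta}$.

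The principal obstacle is the cross coupling $\frac{a^2}{1+a^2}\partial_a^2\PPsi\cdot\omega^{(1)}\gamma_\theta$ appearing in the equation for $\omega^{(2)}\gamma_a$. By Proposition \ref{ControlSecondDerPsi} the dominant piece of the coefficient is $\lesssim t^{-1}\Vert a\partial_a\gamma\Vert_{L^2}\Vert a^{-1}\gamma\Vert_{L^2}$, whose $t^{-1}$ decay is not integrable. This is where the a priori moment bound of Lemma \ref{MomBdsLem} is indispensable: it gives $\Vert a^{-1}\gamma\Vert_{L^2}\lesssim \varepsilon_0$ uniformly in time, making the coupling coefficient only of size $\varepsilon_0\varepsilon_1\, t^{-1+\delta}$. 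Gronwall then generates at most $t^{C\varepsilon_0\varepsilon_1}$-growth, which for $\varepsilon_0$ sufficiently small is absorbed into $\langle t\rangle^{(p+1)\delta}$; this precisely accounts for the extra factor $t^\delta$ between the $\theta$- and $a$-derivative bounds in \eqref{BootDer} (no such term contaminates the $\theta$-derivative identity).

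Finally, the forcing $\mathcal{R}_1$ splits into $\frac{1}{a}\partial_\theta\PPsi\cdot a\partial_a\omega^{(1)}\gamma_\theta$ and $-\partial_a\PPsi\cdot\partial_\theta\omega^{(1)}\gamma_\theta$. Using $\vert a\partial_a\omega^{(j)}\vert\lesssim\vert\omega^{(j)}\vert$ from \eqref{WeightProp} together with $\Vert a^{-1}\partial_\theta\PPsi\Vert_{L^\infty}\lesssim \varepsilon_1^2\, t^{-3/2+2\delta}$ from Proposition \ref{BoundsOnE}, the first piece has integrable-in-time coefficient. For the second piece, $\partial_\theta\omega^{(j)}_{p,q}=p\,\omega^{(j)}_{p-1,q}$ and Proposition \ref{BoundsOnE} yields $\vert\partial_a\PPsi\vert\lesssim \varepsilon_1^2\, t^{-1}(1+\vert\theta\vert+a^{-3})$; the factors $1$, $\vert\theta\vert$, $a^{-3}$ convert $\omega^{(j)}_{p-1,q}$ into $\omega^{(j)}_{p-1,q}$, $\omega^{(j)}_{p,q}$, and $\omega^{(j)}_{p-1,q-3}$ respectively, where the last two weights are available by the compatibility condition \eqref{InductionProperty} and the inductive hypothesis, while the $\omega^{(j)}_{p,q}$ contribution is of the same Gronwall-critical nature as the main coupling and is absorbed in the same way. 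Summing the two energy inequalities for $X_p(t)$ and applying Gronwall closes the induction; the $\varepsilon_1^{3/2}$ on the right of \eqref{BootDer} reflects the cubic structure $\PPsi\sim\gamma^2$ combined with one further factor of $\gamma$ or $\partial\gamma$ in each forcing term.
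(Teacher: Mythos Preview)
Your overall strategy---energy estimates on \eqref{WeightEvolEq}, induction on $p$ via the compatibility property \eqref{InductionProperty}, and the use of Lemma \ref{MomBdsLem} to supply the uniform smallness $\Vert a^{-1}\gamma\Vert_{L^2}\lesssim\varepsilon_0$---is exactly the paper's, and your handling of the $\partial_\theta\PPsi$, $\partial_\theta\partial_a\PPsi$, $(1+a^{-2})\partial_\theta^2\PPsi$ and $\partial_a\PPsi$ contributions is correct. But there is a genuine gap in the treatment of the $\tfrac{a^2}{1+a^2}\partial_a^2\PPsi$ cross-coupling.

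You correctly identify the dominant coupling coefficient as $\varepsilon_0\varepsilon_1\,t^{-1+\delta}$ (combining $\Vert a\partial_a\gamma\Vert_{L^2}\le \varepsilon_1\langle t\rangle^\delta$ from the bootstrap hypothesis with $\Vert a^{-1}\gamma\Vert_{L^2}\lesssim\varepsilon_0$). However, your claim that a joint Gronwall on $X_p=\Vert\omega^{(1)}\gamma_\theta\Vert+\Vert\omega^{(2)}\gamma_a\Vert$ then yields only $t^{C\varepsilon_0\varepsilon_1}$ growth is false: the Gronwall factor is
\[
\exp\Big(\int_1^t \varepsilon_0\varepsilon_1\,s^{-1+\delta}\,ds\Big)=\exp\Big(\tfrac{\varepsilon_0\varepsilon_1}{\delta}(t^\delta-1)\Big),
\]
which is super-polynomial in $t$ and cannot be absorbed into $\langle t\rangle^{(p+1)\delta}$ for any choice of small $\varepsilon_0,\varepsilon_1$. (The analogous term coming from the $\vert\theta\vert$-part of $\partial_a\PPsi$, which you also flag as Gronwall-critical, is harmless because its coefficient is $\Vert\gamma\Vert_{L^2}^2\,t^{-1}\lesssim\varepsilon_0^2\,t^{-1}$ and genuinely produces only $t^{C\varepsilon_0^2}$.)

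The missing idea---and the one the paper uses---is that the system \eqref{WeightEvolEq} is \emph{triangular}: the bad term $\tfrac{a^2}{1+a^2}\partial_a^2\PPsi\cdot\omega^{(1)}\gamma_\theta$ appears only in the equation for $\omega^{(2)}\gamma_a$, while the equation for $\omega^{(1)}\gamma_\theta$ couples back to $\omega^{(2)}\gamma_a$ only through $(1+a^{-2})\partial_\theta^2\PPsi$, which by \eqref{ControlSecondDerPsiEst1} decays like $t^{-2+2\delta}$ and is time-integrable. Hence one first closes $\Vert\omega^{(1)}_{p,q}\gamma_\theta\Vert_{L^2}\lesssim\varepsilon_0+\varepsilon_1^{3/2}\langle t\rangle^{p\delta}$ on its own. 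With this in hand, the cross term in the $\omega^{(2)}\gamma_a$ energy estimate is no longer a Gronwall coefficient but a \emph{forcing}:
\[
t^{-1}\,\Vert a\partial_a\gamma\Vert_{L^2}\,\Vert a^{-1}\gamma\Vert_{L^2}\,\Vert\omega^{(1)}_{p,q}\gamma_\theta\Vert_{L^2}\;\lesssim\; t^{-1}\,\varepsilon_0\,\varepsilon_1\langle t\rangle^{\delta}\cdot\big(\varepsilon_0+\varepsilon_1^{3/2}\langle t\rangle^{p\delta}\big),
\]
whose time integral is $\lesssim \varepsilon_1^{3/2}\langle t\rangle^{(p+1)\delta}$. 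This sequential closure, not a Gronwall smallness, is what produces the extra $t^\delta$ between the $\theta$- and $a$-derivative bounds in \eqref{BootDer}; summing the two inequalities into a single $X_p$ destroys the structure and does not close.
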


In particular, the case of $\omega=1$ gives the result of Proposition \ref{prop:deriv_boot}.

\begin{proof}[Proof of Proposition \ref{BootstrapDer}]
Writing $\omega=\omega_{p,q}$ for simplicity of notation, using \eqref{WeightEvolEq} we find that
\begin{equation*}\label{EstimGowthNormDer}
\begin{split}
\frac{d}{dt}\Vert \omega^{(1)}\gamma_\theta\Vert_{L^2_{\theta,a}}^2&\lesssim\iint\left[\vert\partial_\theta\partial_a\PPsi\vert+\vert\frac{1}{a}\partial_\theta\PPsi\vert\right]\cdot (\omega^{(1)}\gamma_\theta)^2\, d\theta da+\!\iint (1+a^{-2})\vert\partial_\theta\partial_\theta\PPsi\vert\cdot\vert \omega^{(2)}\gamma_a\cdot \omega^{(1)}\gamma_\theta\vert\, d\theta da\\
&\quad+\iint \vert\partial_a\PPsi\vert \cdot\vert \partial_\theta\omega^{(1)}\gamma_\theta\cdot \omega^{(1)}\gamma_\theta\vert\, d\theta da,
\end{split}
\end{equation*}
where we have used $\vert a\partial_a\omega^{(j)}\vert\lesssim \omega^{(j)}$ from \eqref{WeightProp}. The first two terms on each right hand side lead directly to a Gronwall bootstrap using \eqref{BdsDThetaPsi} and \eqref{ControlSecondDerPsiEst1}. The last is not present when $p=0$. If $p\ge1$, we may use the induction property \eqref{WeightProp} with \eqref{BdsDaPsi} to proceed as follows:
\begin{equation*}
\begin{split}
&\iint \vert\partial_a\PPsi\vert\cdot \vert \partial_\theta\omega^{(1)}_{p,q}\gamma_\theta\cdot \omega^{(1)}_{p,q}\gamma_\theta\vert d\theta da\\
\lesssim& t^{-1}\Vert (1+a^{-1})\gamma\Vert_{L^2_{\theta,a}}^2\Vert \omega^{(1)}_{p,q}\gamma_\theta\Vert_{L^2_{\theta,a}}\Vert\omega_{p-1,q}^{(1)}\gamma_\theta\Vert_{L^2_{\theta,a}}+t^{-1}\Vert\gamma\Vert_{L^2_{\theta,a}}^2\left[\Vert\omega^{(1)}_{p,q}\gamma_\theta\Vert_{L^2_{\theta,a}}^2+\Vert\omega^{(1)}_{p-1,q-3}\gamma\Vert_{L^2_{\theta,a}}^2\right]\\
&\quad+t^{-\frac{5}{4}}\Vert \theta^2\gamma\Vert_{L^2_{\theta,a}}^2\Vert \omega^{(1)}_{p,q}\gamma\Vert_{L^2_{\theta,a}}\Vert \omega^{(1)}_{p-1,q}\gamma\Vert_{L^2_{\theta,a}},
\end{split}
\end{equation*}
and we see that all terms lead to \eqref{BootDer}.

Similarly, we compute that
\begin{equation*}\label{EstimGowthNormDer2}
\begin{split}
\frac{d}{dt}\Vert \omega^{(2)}\gamma_a\Vert_{L^2_{\theta,a}}^2&\lesssim\iint\left[\vert\partial_\theta\partial_a\PPsi\vert+\vert\frac{1}{a}\partial_\theta\PPsi\vert\right]\cdot (\omega^{(2)}\gamma_a)^2 d\theta da+\iint \frac{a^2}{1+a^2}\vert\partial_a\partial_a\PPsi\vert\cdot\vert \omega^{(2)}\gamma_a\cdot \omega^{(1)}\gamma_\theta\vert d\theta da\\
&\quad+\iint \vert\partial_a\PPsi\vert\cdot \vert \partial_\theta\omega^{(2)}\gamma_a\cdot \omega^{(2)}\gamma_a\vert d\theta da.
\end{split}
\end{equation*}
Here the only new term is the second one on the right hand side. For this term, we use \eqref{ControlSecondDerPsiEst2} to get
\begin{equation*}
\begin{split}
\iint \frac{a^2}{1+a^2}\vert\partial_a\partial_a\PPsi\vert\cdot\vert \omega^{(2)}\gamma_a\cdot \omega^{(1)}\gamma_\theta\vert d\theta da&\lesssim t^{-1}\Vert a\gamma_a\Vert_{L^2_{\theta,a}}\Vert a^{-1}\gamma\Vert_{L^2_{\theta,a}}\Vert \omega^{(2)}\gamma_a\Vert_{L^2}\Vert \omega^{(1)}\gamma_\theta\Vert_{L^2_{\theta,a}}\\
&\quad+t^{-1}\Vert (a^2+a^{-2})\gamma\Vert_{L^2_{\theta,a}}^2\Vert \omega^{(2)}\gamma_a\Vert_{L^2}\Vert \omega^{(1)}\gamma_\theta\Vert_{L^2_{\theta,a}}\\
&\quad+t^{-\frac{5}{4}}N_1\cdot\Vert \omega^{(2)}\gamma_a\Vert_{L^2}\Vert \omega^{(1)}\gamma_\theta\Vert_{L^2_{\theta,a}}
\end{split}
\end{equation*}
and since we have already controlled the $\theta$ derivative, we may use \eqref{BootDer} to obtain that
\begin{equation*}
\begin{split}
\Vert a\gamma_a\Vert_{L^2_{\theta,a}}\Vert \omega^{(1)}_{p,q}\gamma_\theta\Vert_{L^2_{\theta,a}}&\lesssim \varepsilon_1^2\langle t\rangle^{(p+1)\delta},
\end{split}
\end{equation*}
which gives an acceptable contribution with Gronwall's estimate.
\end{proof}

\subsection{Asymptotic behavior}\label{ModifiedScat}

The analysis in this section is partially inspired by \cite{IPWW2020,Pan2020}.

\subsubsection{Weak-strong limit and asymptotic electric field}\label{ConvScatteringQties}

Before we obtain strong convergence of the particle distribution, we first need weak convergence of ``asymptotic functions'' which are defined in terms of averages along linearized trajectories. Given a bounded measurable function $\tau$, we define
\begin{equation*}
\begin{split}
\langle\tau\rangle(t)&:=\iint\tau(\alpha)\gamma^2(\vartheta,\alpha,t)d\vartheta d\alpha.
\end{split}
\end{equation*}
The following Lemma states that these averages converge.

\begin{lemma}\label{ScatteringAverages}
Assume that $\gamma$ solves \eqref{eq:VPPoisson} for $0\le t\le T$ and satisfies the conclusions of Proposition \ref{MomBdsLem} for $p=2$ and the conclusions of Proposition \ref{BootstrapDer}. Given any bounded function $\tau(a)$, the limit
\begin{equation*}
\begin{split}
\langle\tau\rangle_\infty:=\lim_{t\to\infty}\iint\tau(\alpha)\gamma^2(\vartheta,\alpha,t)d\vartheta d\alpha
\end{split}
\end{equation*}
exists and satisfies
\begin{equation}\label{ConvergenceAverages}
\vert\langle\tau\rangle(t)-\langle\tau\rangle_\infty\vert\lesssim  \varepsilon_1^4t^{-\frac{1}{4}}.
\end{equation}

\end{lemma}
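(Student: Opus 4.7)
My plan is to show that $\tfrac{d}{dt}\langle\tau\rangle(t)$ is integrable in time: this both forces $\langle\tau\rangle(t)$ to be Cauchy as $t\to\infty$, thereby defining $\langle\tau\rangle_\infty$, and yields the quantitative bound \eqref{ConvergenceAverages} upon integration from $t$ to $\infty$. Using $\partial_t\gamma^2=\lambda\{\PPsi,\gamma^2\}$ and expanding the Poisson bracket, I would start from
\begin{equation*}
\frac{d}{dt}\langle\tau\rangle(t)
=\lambda\iint\tau(\alpha)\bigl(\partial_\alpha\PPsi\,\partial_\vartheta\gamma^2-\partial_\vartheta\PPsi\,\partial_\alpha\gamma^2\bigr)\,d\vartheta\,d\alpha.
\end{equation*}

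Since $\tau$ depends only on $\alpha$, the first summand allows a clean integration by parts in $\vartheta$, producing $-\iint \tau(\alpha)\,\partial_\vartheta\partial_\alpha\PPsi\cdot\gamma^2\,d\vartheta\,d\alpha$ without any derivative of $\tau$; the pointwise bound on $\partial_\vartheta\partial_\alpha\PPsi$ from Proposition \ref{ControlSecondDerPsi} combined with conservation of $\Vert\gamma\Vert_{L^2}$ (an immediate consequence of the Hamiltonian structure) bounds this term by $\Vert\tau\Vert_\infty\,\varepsilon_1^4\langle t\rangle^{-3/2+C\delta}$. The second summand is the subtle one: an integration by parts in $\alpha$ is unavailable because it would produce $\tau'(\alpha)$. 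My workaround is to keep $\partial_\alpha\gamma^2=2\gamma\,\partial_\alpha\gamma$ intact and apply Cauchy--Schwarz after factoring $\alpha^{-1}$ out of $\partial_\vartheta\PPsi$:
\begin{equation*}
\Bigl\vert\iint\tau(\alpha)\,\partial_\vartheta\PPsi\cdot 2\gamma\,\partial_\alpha\gamma\,d\vartheta\,d\alpha\Bigr\vert
\lesssim\Vert\tau\Vert_\infty\,\Vert\alpha^{-1}\partial_\vartheta\PPsi\Vert_{L^\infty}\,\Vert\gamma\Vert_{L^2}\,\Vert\alpha\,\partial_\alpha\gamma\Vert_{L^2}.
\end{equation*}
Using \eqref{BdsDThetaPsi} to control $\Vert\alpha^{-1}\partial_\vartheta\PPsi\Vert_{L^\infty}\lesssim\varepsilon_1^2\langle t\rangle^{-3/2+C\delta}$ and the bootstraps of Lemma \ref{MomBdsLem} and Proposition \ref{BootstrapDer} to control $\Vert\alpha\,\partial_\alpha\gamma\Vert_{L^2}\lesssim\varepsilon_1\langle t\rangle^\delta$, this contribution is again of size $\Vert\tau\Vert_\infty\,\varepsilon_1^4\langle t\rangle^{-3/2+C\delta}$.

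Combining the two estimates and choosing $\delta$ small (as permitted by Proposition \ref{prop:deriv_boot}), I obtain $\vert\tfrac{d}{dt}\langle\tau\rangle(t)\vert\lesssim\Vert\tau\Vert_\infty\,\varepsilon_1^4\,\langle t\rangle^{-5/4}$, which is integrable. Integrating from $t$ to $\infty$ yields the Cauchy property and the bound \eqref{ConvergenceAverages} (the exponent $1/4$ is a clean suboptimal form of the stronger $1/2-C\delta$ actually obtained).

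The main obstacle is precisely the asymmetry between the two Poisson-bracket summands: under the sole assumption that $\tau$ is bounded, the $\vartheta$-derivative integrates away at no cost, but an $\alpha$-integration by parts is forbidden. The resolution is structural rather than computational: the pointwise bound on $\partial_\vartheta\PPsi$ coming from \eqref{BdsDThetaPsi} naturally carries a factor of $a$, which is exactly matched by the $\Vert a\partial_a\gamma\Vert_{L^2}$ norm propagated by the derivative bootstrap. This pairing is what closes the estimate by Cauchy--Schwarz alone, without ever differentiating $\tau$.
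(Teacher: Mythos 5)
Your proposal is correct and follows essentially the same route as the paper: expand $\tfrac{d}{dt}\langle\tau\rangle$ via the Poisson bracket, integrate the $\partial_\alpha\PPsi\,\partial_\vartheta\gamma^2$ term by parts in $\vartheta$ (exploiting that $\tau$ depends only on $\alpha$) and control it with the decay of $\partial_\theta\partial_a\PPsi$ from Proposition \ref{ControlSecondDerPsi}, then close the $\partial_\vartheta\PPsi\,\partial_\alpha\gamma^2$ term by Cauchy--Schwarz after pairing the $a$-gain in \eqref{BdsDThetaPsi} against the propagated $\Vert a\partial_a\gamma\Vert_{L^2}$ norm, and integrate in time. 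This is precisely the paper's argument (the paper's displayed second summand appears to carry a small typo, $\gamma\,\partial_\theta\gamma$ in place of $\gamma^2$, which your derivation avoids).
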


\begin{proof}[Proof of Lemma \ref{ScatteringAverages}]

Using \eqref{eq:VPPoisson}, we see that
\begin{equation*}
\begin{split}
\left\vert\frac{d}{dt}\langle\tau\rangle(t)\right\vert&\lesssim\abs{\iint \tau(\alpha)\partial_\theta\PPsi\cdot\gamma(\vartheta,\alpha)\cdot\partial_a \gamma(\vartheta,\alpha)d\vartheta d\alpha}+\abs{\iint \tau(\alpha)\partial_\alpha\partial_\theta\PPsi\cdot\gamma(\vartheta,\alpha)\cdot\partial_\theta \gamma(\vartheta,\alpha)d\vartheta d\alpha}\\
&\lesssim \left[\Vert a^{-1}\partial_\theta\PPsi\Vert_{L^\infty_{\theta,a}}\Vert a\partial_a\gamma\Vert_{L^2_{\theta,a}} +\norm{\partial_\theta\partial_a\PPsi}_{L^\infty_{\theta,a}}\norm{\partial_\theta\gamma}_{L^2_{\theta,a}}\right]\Vert \tau(\alpha)\gamma(\vartheta,\alpha)\Vert_{L^2_{\theta,a}}.
\end{split}
\end{equation*}
Proposition \ref{BoundsOnE}, Proposition \ref{ControlSecondDerPsi} and \eqref{AssBoot1} then show that $\langle\tau\rangle(t)$ is a Cauchy sequence as $t\to\infty$. Integrating the time derivative then gives the bound \eqref{ConvergenceAverages}.

\end{proof}

The convergence of the scattering data allows to define the asymptotic electric potential and electric field
\begin{equation*}
\begin{split}
\Psi_\infty(a)&:=\lim_{t\to\infty}\iint\frac{1}{\max\{a,\alpha\}}\gamma^2(\vartheta,\alpha,t)d\vartheta d\alpha,\\
\mathcal{E}_\infty(a)&:=\frac{1}{a^2}\lim_{t\to\infty}\iint\mathfrak{1}_{\{\alpha\le a\}}\gamma^2(\vartheta,\alpha,t)d\vartheta d\alpha.
\end{split}
\end{equation*}
Informally, we expect that
\begin{equation*}
\begin{split}
\PPsi&=\frac{1}{t}\Psi_\infty(a)+o(t^{-1}),\qquad \widetilde{E}(\theta,a,t)=\frac{1}{t}\mathcal{E}_\infty(a)+o(t^{-1}).
\end{split}
\end{equation*}
Under our assumptions we can prove the following:
\begin{lemma}\label{AsymptoticEF}
Under the assumptions of Lemma \ref{ScatteringAverages}, there holds that
\begin{equation*}
\begin{split}
\vert\mathcal{E}_\infty(a)\vert\lesssim \varepsilon_1^2,\qquad \mathfrak{1}_{\mathcal{B}_\ast}\cdot\left\vert\partial_a\PPsi(\theta,a,t)+\frac{1}{t}\mathcal{E}_\infty(a)\right\vert&\lesssim t^{-\frac{6}{5}}\varepsilon_1^2,
\end{split}
\end{equation*}
where $\mathcal{B}_\ast$ is a smaller version of the bulk
\begin{equation*}
\begin{split}
\mathcal{B}_\ast:=\{(\theta,a)\,\, \vert\theta\vert\le t^\frac{1}{4},\,\, t^{-\frac{1}{4}}\le a\le t^\frac{1}{4}\}\subset\mathcal{B}.
\end{split}
\end{equation*}

\end{lemma}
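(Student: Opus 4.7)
The uniform bound $|\mathcal{E}_\infty(a)|\lesssim\varepsilon_1^2$ is immediate: existence of the limit follows from Lemma~\ref{ScatteringAverages} with $\tau=\mathfrak{1}_{\{\alpha\le a\}}$ (bounded by $1$), and for the uniform estimate we split $a\ge 1$, where $|\mathcal{E}_\infty(a)|\le a^{-2}\Vert\gamma\Vert_{L^2}^2\lesssim a^{-2}\varepsilon_1^2$, and $a<1$, where we insert a negative moment: $\langle\mathfrak{1}_{\{\alpha\le a\}}\rangle_\infty\le a^{2K}\Vert\alpha^{-K}\gamma\Vert_{L^2}^2$ for $K\ge 2$, absorbing the $a^{-2}$ prefactor. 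For the main estimate, starting from $\partial_a\PPsi=-{\bf m}(\RR)\partial_a\RR/\RR^2$, the plan is to decompose the error as
\begin{equation*}
\partial_a\PPsi(\theta,a,t)+\frac{\mathcal{E}_\infty(a)}{t}=\underbrace{-{\bf m}(\RR)\left[\frac{\partial_a\RR}{\RR^2}-\frac{1}{ta^2}\right]}_{=:\,\mathrm{I}}\,+\,\underbrace{\frac{a^2\mathcal{E}_\infty(a)-{\bf m}(\RR)}{ta^2}}_{=:\,\mathrm{II}}.
\end{equation*}

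For the geometric term $\mathrm{I}$, the refined asymptotics of $H$ in \eqref{EstimG} give, on $\mathcal{B}_\ast$, $\RR(\theta,a)=ta+\theta-\tfrac{q\ln(ta^3)}{2a^2}+O(a^{-2})$ and $\partial_a\RR(\theta,a)=t+O(\ln(ta^3)/a^3)$; a direct calculation then yields $\left|\partial_a\RR/\RR^2-1/(ta^2)\right|\lesssim (ta^2)^{-1}\bigl(|\theta|/(ta)+\ln(ta^3)/(ta^3)\bigr)\lesssim (ta^2)^{-1}\,t^{-1/4}\ln t$ on $\mathcal{B}_\ast$. Combined with ${\bf m}(\RR(\theta,a),t)\lesssim a^2\varepsilon_1^2$, which follows from the first half of the statement (or from Lemma~\ref{lem:ControlOnEField} with $\ell=2$, since $\RR\sim ta$ in the bulk), this gives $|\mathrm{I}|\lesssim\varepsilon_1^2 t^{-5/4}\ln t\lesssim\varepsilon_1^2 t^{-6/5}$.

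Term $\mathrm{II}$ is split as $a^2\mathcal{E}_\infty(a)-{\bf m}(\RR)=\bigl[\langle\mathfrak{1}_{\{\alpha\le a\}}\rangle_\infty-\langle\mathfrak{1}_{\{\alpha\le a\}}\rangle(t)\bigr]+\iint\bigl[\mathfrak{1}_{\{\alpha\le a\}}-\mathfrak{1}_{\{\RR(\vartheta,\alpha)\le\RR(\theta,a)\}}\bigr]\gamma^2\,d\vartheta d\alpha$. The first bracket is handled by sharpening the proof of Lemma~\ref{ScatteringAverages}: using $\Vert\mathfrak{1}_{\{\alpha\le a\}}\gamma\Vert_{L^2}\le a^K\Vert\alpha^{-K}\gamma\Vert_{L^2}$ in the $\Vert\tau\gamma\Vert_{L^2}$ factor of that proof produces a rate $\lesssim\varepsilon_1^4 a^K t^{-1/2+\delta}$, which after division by $ta^2$ and invoking $a\le t^{1/4}$ in $\mathcal{B}_\ast$ is $\lesssim\varepsilon_1^4 t^{-6/5}$ for a suitable choice of $K\ge 3$. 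The symmetric-difference integral is split by $(\vartheta,\alpha)\in\mathcal{B}$ versus its complement: outside $\mathcal{B}$ the contribution is $O(t^{-N}\varepsilon_1^2)$ via the moment bounds of Proposition~\ref{MomBdsLem}, while inside $\mathcal{B}$ the asymptotics from Lemma~\ref{DerRLem} yield $\RR(\vartheta,\alpha)-\RR(\theta,a)=t(\alpha-a)+(\vartheta-\theta)+O(\ln(ta^3)/a^2+\ln(t\alpha^3)/\alpha^2)$, placing the symmetric-difference set in a strip $\{|\alpha-a|\lesssim(|\theta|+|\vartheta|+\mathrm{err})/t\}$; this strip is estimated via a dyadic decomposition in $|\vartheta|$ (with the large-$|\vartheta|$ pieces absorbed by high $\theta$-moments of $\gamma$) combined with a one-dimensional Sobolev-type slice estimate $\Vert\gamma(\cdot,\alpha)\Vert_{L^2_\vartheta}^2\lesssim\Vert\gamma\Vert_{L^2}(\Vert\gamma\Vert_{L^2}+\Vert\partial_\alpha\gamma\Vert_{L^2})$, valid on $a\ge t^{-1/4}$ using $\Vert a\partial_a\gamma\Vert_{L^2}$.

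The main technical obstacle is precisely this last step: coordinating the logarithmic corrections from the asymptotics of $H$, the moment thresholds required outside the bulk, and the Sobolev slice bound, so that one reaches the $t^{-6/5}$ rate; the restriction to the smaller bulk $\mathcal{B}_\ast$ (rather than $\mathcal{B}$) is precisely what makes all three of these estimates simultaneously affordable.
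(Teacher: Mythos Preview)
Your approach is correct and follows the same overall strategy as the paper: split off a geometric error, then compare the mass with $a^2\mathcal{E}_\infty(a)$ via a symmetric-difference argument plus the time convergence from Lemma~\ref{ScatteringAverages}. The main organizational difference is that the paper inserts an intermediate comparison with the threshold $at$: it first bounds $\tfrac{{\bf m}(\RR)}{\RR^2}-\tfrac{{\bf m}(at)}{(at)^2}$ using the pointwise density estimate of Lemma~\ref{LemDensity} together with $|\RR-at|\lesssim t^{1/4}$ on $\mathcal{B}_\ast$, and only \emph{then} runs the symmetric-difference argument with threshold $at$ rather than $\RR(\theta,a)$. This buys a real simplification: the set $\{\RR(\vartheta,\alpha)\le at\}\triangle\{\alpha\le a\}$ no longer depends on $\theta$, so for $(\vartheta,\alpha)\in\mathcal{B}_\ast$ it lies in a strip $\{|\alpha-a|\lesssim t^{-3/4}\}$ of $\vartheta$-independent width, eliminating the need for your dyadic decomposition in $|\vartheta|$ and the attendant mixed $(\theta,a)$-moments. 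Your route trades invoking Lemma~\ref{LemDensity} for that dyadic step; both work, but since the density bound is already in hand from the derivative bootstrap, the paper's ordering is the more economical one. The Sobolev-type slice estimate you single out is indeed what is needed (in either version) to extract decay from the thin strip in $\alpha$; the paper is quite terse on this point. Finally, for the first bound on $\mathcal{E}_\infty$ the paper avoids your case split via the one-line inequality $a^{-2}\mathfrak{1}_{\{\alpha\le a\}}\le\alpha^{-2}$.
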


\begin{proof}[Proof of Lemma \ref{AsymptoticEF}]

The first estimate follows from the uniform bound
\begin{equation*}
\begin{split}
\frac{1}{a^2}\left\vert \iint\mathfrak{1}_{\{\alpha\le a\}}\gamma^2(\vartheta,\alpha,t)d\vartheta d\alpha\right\vert&\le \iint\mathfrak{1}_{\{\alpha\le a\}}\alpha^{-2}\gamma^2(\vartheta,\alpha,t)d\vartheta d\alpha\le\Vert a^{-1}\gamma\Vert_{L^2_{\theta,a}}^2\lesssim\varepsilon_1^2.
\end{split}
\end{equation*}
We now turn to the second estimate. Recall from the proof of Proposition \ref{BoundsOnE} that
\begin{equation*}
\begin{split}
\partial_a\PPsi(\theta,a,t)&=-\frac{{\bf m}(\widetilde{R}(\theta,a),t)}{\widetilde{R}^2(\theta,a)}\left[t\partial_\theta R(\theta+ta,a)+\partial_aR(\theta+ta,a)\right]=-t\frac{{\bf m}(\widetilde{R}(\theta,a),t)}{\widetilde{R}^2(\theta,a)}+\mathcal{R}_1,\\
\mathfrak{1}_{\mathcal{B}}\vert\mathcal{R}_1\vert &\le 
\frac{{\bf m}(\widetilde{R}(\theta,a),t)}{\widetilde{R}^2(\theta,a)}\left(\frac{tq}{a^2\widetilde{R}(\theta,a)}+\frac{q}{a^3}\ln\langle\frac{a^2}{q}\widetilde{R}(\theta,a)\rangle\right)\lesssim \varepsilon_1^2t^{-\frac{6}{5}},
\end{split}
\end{equation*}
where we have used  Lemma \ref{DerRLem}. Furthermore, with $\mathcal{E}(a,t):=a^{-2}\iint\mathfrak{1}_{\alpha\leq a}\gamma^2(\vartheta,\alpha,t)$, we have
\begin{equation*}
\begin{split}
t\frac{{\bf m}(at,t)}{(at)^2}&=\frac{1}{a^2t}\iint\mathfrak{1}_{\{R(\vartheta+t\alpha,\alpha)\le at\}}\gamma^2(\vartheta,\alpha,t)\, d\vartheta d\alpha=\frac{1}{t}\left[\mathcal{E}(a,t)+\mathcal{R}_2\right],\\
\vert \mathcal{R}_2\vert&\le \frac{1}{a^2}\iint\mathfrak{1}_{\mathcal{S}_1\cup\mathcal{S}_2}\gamma^2(\vartheta,\alpha,t)\, d\vartheta d\alpha,
\end{split}
\end{equation*}
where
\begin{equation*}
\begin{split}
\mathcal{S}_1\cup\mathcal{S}_2&=\{R(\vartheta+t\alpha,\alpha)\le at\}\triangle\{\alpha t\le at\},\\
\mathcal{S}_1&:=\{\alpha\leq a,\,\, R(\vartheta+t\alpha,\alpha)\ge at\},\qquad
\mathcal{S}_2:=\{\alpha\ge a,\,\, R(\vartheta+t\alpha,\alpha)\le at\}.
\end{split}
\end{equation*}
Note from \eqref{EstimG}, there holds that
\begin{equation*}
\begin{split}
\vert\vartheta+t\alpha\vert\ge \widetilde{R}(\vartheta,\alpha)\ge \vert\vartheta+t\alpha\vert-\frac{q}{a^2}\ln\langle\frac{a^2\vert\vartheta+t\alpha\vert}{q}\rangle,
\end{split}
\end{equation*}
so that on the support of $\mathcal{B}_\ast$
\begin{equation}\label{EstimRBast}
\vert \widetilde{R}(\vartheta,\alpha)-t\alpha\vert\le Ct^{\frac{1}{4}}
\end{equation}
for some universal constant $C>0$. Therefore we have
\begin{equation*}
\begin{split}
\mathcal{B}_\ast\cap\{\mathcal{S}_1\cup\mathcal{S}_2\}\subset\{a-Ct^{-\frac{1}{4}}\le \alpha\le a+Ct^{-\frac{1}{4}}\},
\end{split}
\end{equation*}
so that, using \eqref{AssBoot1},
\begin{equation*}
\begin{split}
\mathfrak{1}_{\mathcal{B}_\ast}\cdot\vert\mathcal{R}_2\vert &\lesssim \frac{1}{a^2}\mathfrak{1}_{\mathcal{B}_\ast}\cdot\iint\mathfrak{1}_{\mathcal{B}^c_\ast}\gamma^2(\vartheta,\alpha)\, d\vartheta d\alpha+\frac{1}{a^2}\iint\mathfrak{1}_{\{\vert \alpha-a\vert\le Ct^{-\frac{1}{4}}\}}\gamma^2(\vartheta,\alpha)\, d\vartheta d\alpha\\
&\lesssim t^{\frac{1}{2}}\iint\mathfrak{1}_{\mathcal{B}^c_\ast}\gamma^2(\vartheta,\alpha)\, d\vartheta d\alpha+\iint\mathfrak{1}_{\{\vert \alpha-a\vert\le Ct^{-\frac{1}{4}}\}}\left(\frac{\alpha}{a}\right)^2\alpha^{-2}\gamma^2(\vartheta,\alpha)\, d\vartheta d\alpha\\
&\lesssim t^{-\frac{1}{5}}\varepsilon_1^2.
\end{split}
\end{equation*}
Finally, using \eqref{EstimRBast} with Lemma \ref{lem:ControlOnEField} and Lemma \ref{LemDensity},
\begin{equation*}
\begin{split}
\mathfrak{1}_{\mathcal{B}_\ast}\left\vert \frac{{\bf m}(\widetilde{R}(\theta,a))}{\widetilde{R}^2(\theta,a)}-\frac{{\bf m}(at)}{a^2t^2}\right\vert& \le \mathfrak{1}_{\mathcal{B}_\ast}\cdot\frac{\vert{\bf m}(\widetilde{R}(\theta,a))-{\bf m}(at)\vert}{a^2t^2}+\mathfrak{1}_{\mathcal{B}_\ast}\cdot\frac{{\bf m}(\widetilde{R}(\theta,a))}{\widetilde{R}^2(\theta,a)}\left\vert 1-\frac{\widetilde{R}^2(\theta,a)}{a^2t^2}\right\vert\\
&\lesssim \mathfrak{1}_{\mathcal{B}_\ast}\cdot\frac{\vert \widetilde{R}(\theta,a)-at\vert}{(at)^2}\cdot \sup_{r} \bm\varrho(r,t)+\mathfrak{1}_{\mathcal{B}_\ast}\cdot\varepsilon_1^2t^{-2}\frac{\vert \widetilde{R}(\theta,a)-at\vert}{at}\\
&\lesssim\varepsilon_1^2 t^{-\frac{6}{5}}.
\end{split}
\end{equation*}
Since by \eqref{ConvergenceAverages} we have that $\abs{\mathcal{E}(a,t)-\mathcal{E}_\infty(a)}\lesssim \varepsilon_1^4 t^{-\frac{1}{4}}$, this concludes the proof.
\end{proof}

\subsubsection{Strong limit}\label{ConvPDF}

We can now correct the trajectories to get a strong limit and prove our main theorem.

\begin{proof}[Proof of Theorem \ref{thm:mainfull}]
Under these conditions, we may apply Lemma \ref{MomBdsLem} and Proposition \ref{BootstrapDer} to propagate global bounds on the moments and derivatives with the weights in \eqref{CompatibleSet}. Lemma \ref{ScatteringAverages} justifies the existence of $\mathcal{E}_\infty$ and we have the estimates in Lemma \ref{AsymptoticEF}. Let
\begin{equation*}
\begin{split}
\sigma(\theta,a,t)&:=\gamma(\theta+\lambda\ln t\cdot\mathcal{E}_\infty(a),a,t).
\end{split}
\end{equation*}
We claim that $\sigma$ converges to a limit $\sigma_\infty$ in $L^2_{\theta,a}$. Indeed we compute that
\begin{equation*}
\begin{split}
\partial_t\sigma(\theta,a,t)&=\lambda\left(\partial_a\PPsi(\theta^\ast,a,t)+\frac{1}{t}\mathcal{E}_\infty(a)\right)\partial_\theta\gamma(\theta^\ast,a,t)-\lambda\partial_\theta\PPsi(\theta^\ast,a,t)\partial_a\gamma(\theta^\ast,a,t),\\
\theta^\ast&=\theta+\lambda \ln t\cdot\mathcal{E}_\infty(a).
\end{split}
\end{equation*}
We directly obtain that
\begin{equation*}
\begin{split}
\Vert \partial_\theta\PPsi(\theta^\ast,a,t)\partial_a\gamma(\theta^\ast,a,t)\Vert_{L^2_{\theta,a}}&\lesssim \Vert a^{-1}\partial_\theta\PPsi\Vert_{L^\infty_{\theta,a}}\Vert a\partial_a\gamma\Vert_{L^2_{\theta,a}}\lesssim \varepsilon_1^2t^{-\frac{5}{4}},
\end{split}
\end{equation*}
while using Lemma \ref{AsymptoticEF}, we find that
\begin{equation*}
\begin{split}
\Vert\mathfrak{1}_{\mathcal{B}_\ast}\left(\partial_a\PPsi(\theta^\ast,a,t)+\frac{1}{t}\mathcal{E}_\infty(a)\right)\partial_\theta\gamma(\theta^\ast,a,t)\Vert_{L^2_{\theta,a}}&\lesssim \varepsilon_1^2 t^{-\frac{6}{5}}\Vert\partial_\theta\gamma\Vert_{L^2_{\theta,a}}\lesssim \varepsilon_1^3 t^{-\frac{9}{8}}.
\end{split}
\end{equation*}
Using \eqref{AssBoot1} and \eqref{BdsDaPsi} yields
\begin{equation*}
\begin{split}
\Vert \mathfrak{1}_{\mathcal{B}^c_\ast}\frac{1}{t}\mathcal{E}_\infty(a)\partial_\theta\gamma(\theta^\ast,a,t)\Vert_{L^2_{\theta,a}}&\lesssim t^{-\frac{6}{5}}\varepsilon_1^2\Vert (\vert\theta\vert+a+a^{-1})\partial_\theta\gamma\Vert_{L^2_{\theta,a}}\lesssim \varepsilon_1^3 t^{-\frac{9}{8}}, \\
\Vert \mathfrak{1}_{\mathcal{B}^c_\ast}\partial_a\PPsi(\theta^\ast,a,t)\partial_\theta\gamma(\theta^\ast,a,t)\Vert_{L^2_{\theta,a}}&\lesssim\varepsilon_1^2 t^{-1}\Vert \mathfrak{1}_{\mathcal{B}_\ast}(1+\vert\theta\vert+a^{-3})\partial_\theta\gamma\Vert_{L^2_{\theta,a}}\lesssim \varepsilon_1^3 t^{-\frac{9}{8}}.
\end{split}
\end{equation*}
This establishes \eqref{L2Convergence}. In addition, the bounds from Proposition \ref{BootstrapDer} give uniform bounds on $\partial_\theta\gamma$ in $L^2_{\theta,a}$, which carries over to $\gamma_\infty$. Finally \eqref{SecondDefEF} follows from $L^2_{\theta,sa}$ convergence. Finally, the uniqueness of solutions follows by a simple Gronwall estimate on the $L^2_{\theta,a}$-norm of the difference of two solutions.
\end{proof}

\subsection*{Acknowledgments}
The authors would like to thank Y.\ Guo and P.\ Flynn for interesting and stimulating discussions. 

B.\ P.\ was supported in part by NSF grant DMS-1700282.

\bibliographystyle{abbrv}
\bibliography{vp-lib}

\begin{thebibliography}{10}

\bibitem{BD1985}
C.~Bardos and P.~Degond.
\newblock Global existence for the {V}lasov-{P}oisson equation in {$3$} space
  variables with small initial data.
\newblock {\em Annales de l'Institut Henri Poincar\'{e}. Analyse Non
  Lin\'{e}aire}, 2(2):101--118, 1985.

\bibitem{BM2018}
C.~Bardos and N.~J. Mauser.
\newblock Kinetic equations: a {F}rench history.
\newblock {\em European Mathematical Society. Newsletter}, (109):10--18, 2018.
\newblock Translation of the French original [ MR3752406].

\bibitem{BMM2018}
J.~Bedrossian, N.~Masmoudi, and C.~Mouhot.
\newblock Landau damping in finite regularity for unconfined systems with
  screened interactions.
\newblock {\em Communications on Pure and Applied Mathematics}, 71(3):537--576,
  2018.

\bibitem{CM2010}
S.~Caprino and C.~Marchioro.
\newblock On the plasma-charge model.
\newblock {\em Kinetic and Related Models}, 3(2):241--254, 2010.

\bibitem{CMMP2012}
S.~Caprino, C.~Marchioro, E.~Miot, and M.~Pulvirenti.
\newblock On the attractive plasma-charge system in 2-d.
\newblock {\em Communications in Partial Differential Equations},
  37(7):1237--1272, 2012.

\bibitem{CZW2015}
J.~Chen, X.~Zhang, and J.~Wei.
\newblock Global weak solutions for the {V}lasov-{P}oisson system with a point
  charge.
\newblock {\em Mathematical Methods in the Applied Sciences},
  38(17):3776--3791, 2015.

\bibitem{CK2016}
S.-H. Choi and S.~Kwon.
\newblock Modified scattering for the {V}lasov-{P}oisson system.
\newblock {\em Nonlinearity}, 29(9):2755--2774, 2016.

\bibitem{CLS2018}
G.~Crippa, S.~Ligabue, and C.~Saffirio.
\newblock Lagrangian solutions to the {V}lasov-{P}oisson system with a point
  charge.
\newblock {\em Kinetic and Related Models}, 11(6):1277--1299, 2018.

\bibitem{DMS2015}
L.~Desvillettes, E.~Miot, and C.~Saffirio.
\newblock Polynomial propagation of moments and global existence for a
  {V}lasov-{P}oisson system with a point charge.
\newblock {\em Annales de l'Institut Henri Poincar\'{e}. Analyse Non
  Lin\'{e}aire}, 32(2):373--400, 2015.

\bibitem{FHR2021}
E.~Faou, R.~Horsin, and F.~Rousset.
\newblock On linear landau damping around inhomogeneous stationary states of
  the {V}lasov-hmf model.
\newblock {\em in preparation}.

\bibitem{FR2016}
E.~Faou and F.~Rousset.
\newblock Landau damping in sobolev spaces for the vlasov-hmf model.
\newblock {\em Archive for Rational Mechanics and Analysis}, (219):887--902,
  2016.

\bibitem{Gla1996}
R.~T. Glassey.
\newblock {\em The {C}auchy problem in kinetic theory}.
\newblock Society for Industrial and Applied Mathematics (SIAM), Philadelphia,
  PA, 1996.

\bibitem{GL2017}
Y.~Guo and Z.~Lin.
\newblock The existence of stable {BGK} waves.
\newblock {\em Communications in Mathematical Physics}, 352(3):1121--1152,
  2017.

\bibitem{GS1995}
Y.~Guo and W.~A. Strauss.
\newblock Nonlinear instability of double-humped equilibria.
\newblock {\em Annales de l'Institut Henri Poincar\'{e}. Analyse Non
  Lin\'{e}aire}, 12(3):339--352, 1995.

\bibitem{HNR2019}
D.~Han-Kwan, T.~T. Nguyen, and F.~Rousset.
\newblock Asymptotic stability of equilibria for screened vlasov-poisson
  systems via pointwise dispersive estimates.
\newblock {\em arXiv preprint}, arXiv:1906.05723 (2019).

\bibitem{horsi2017}
R.~Horsin.
\newblock {\em Comportement en temps long d'\'{e}quations de type Vlasov :
  \'{e}tudes math\'{e}matiques et num\'{e}riques}.
\newblock PhD thesis, 2017.
\newblock Th\`ese de doctorat dirig\'{e}e par E. Faou, et F. Rousset,
  Math\'{e}matiques et Applications, Rennes 1 2017.

\bibitem{IJ2019}
A.~Ionescu and H.~Jia.
\newblock Axi-symmetrization near point vortex solutions for the 2d euler
  equation.
\newblock {\em arXiv preprint}, arXiv:1904.09170 (2019).

\bibitem{IPWW2020}
A.~D. Ionescu, B.~Pausader, X.~Wang, and K.~Widmayer.
\newblock On the asymptotic behavior of solutions to the vlasov-poisson system.
\newblock {\em arXiv preprint}, arXiv:2005.03617 (2020).

\bibitem{LMR2008}
M.~Lemou, F.~M\'{e}hats, and P.~Raphael.
\newblock The orbital stability of the ground states and the singularity
  formation for the gravitational {V}lasov {P}oisson system.
\newblock {\em Archive for Rational Mechanics and Analysis}, 189(3):425--468,
  2008.

\bibitem{LZ2017}
D.~Li and X.~Zhang.
\newblock On the 3-{D} {V}lasov-{P}oisson system with point charges: global
  solutions with unbounded supports and propagation of velocity-spatial
  moments.
\newblock {\em Journal of Differential Equations}, 263(10):6231--6283, 2017.

\bibitem{LZ2018}
D.~Li and X.~Zhang.
\newblock Asymptotic growth bounds for the 3-{D} {V}lasov-{P}oisson system with
  point charges.
\newblock {\em Mathematical Methods in the Applied Sciences}, 41(9):3294--3306,
  2018.

\bibitem{LP1991}
P.-L. Lions and B.~Perthame.
\newblock Propagation of moments and regularity for the {$3$}-dimensional
  {V}lasov-{P}oisson system.
\newblock {\em Inventiones Mathematicae}, 105(2):415--430, 1991.

\bibitem{MMZ1994}
A.~J. Majda, G.~Majda, and Y.~X. Zheng.
\newblock Concentrations in the one-dimensional {V}lasov-{P}oisson equations.
  {I}. {T}emporal development and non-unique weak solutions in the single
  component case.
\newblock {\em Physica D. Nonlinear Phenomena}, 74(3-4):268--300, 1994.

\bibitem{MMP2011}
C.~Marchioro, E.~Miot, and M.~Pulvirenti.
\newblock The {C}auchy problem for the 3-{D} {V}lasov-{P}oisson system with
  point charges.
\newblock {\em Archive for Rational Mechanics and Analysis}, 201(1):1--26,
  2011.

\bibitem{Mio2016}
E.~Miot.
\newblock A uniqueness criterion for unbounded solutions to the
  {V}lasov-{P}oisson system.
\newblock {\em Communications in Mathematical Physics}, 346(2):469--482, 2016.

\bibitem{Mou2013}
C.~Mouhot.
\newblock Stabilit\'{e} orbitale pour le syst\`eme de {V}lasov-{P}oisson
  gravitationnel (d'apr\`es {L}emou-{M}\'{e}hats-{R}apha\"{e}l, {G}uo, {L}in,
  {R}ein et al.).
\newblock Number 352, pages Exp. No. 1044, vii, 35--82. 2013.
\newblock S\'{e}minaire Bourbaki. Vol. 2011/2012. Expos\'{e}s 1043--1058.

\bibitem{MV2011}
C.~Mouhot and C.~Villani.
\newblock On {L}andau damping.
\newblock {\em Acta Mathematica}, 207(1):29--201, 2011.

\bibitem{Pan2020}
S.~Pankavich.
\newblock Exact large time behavior of spherically-symmetric plasmas.
\newblock {\em arXiv preprint}, arXiv:2006.11447 (2020).

\bibitem{Pen1960}
O.~Penrose.
\newblock Electrostatic instabilities of a uniform non-maxwellian plasma.
\newblock {\em The Physics of Fluids}, 3(2):258--265, 1960.

\bibitem{Pfa1992}
K.~Pfaffelmoser.
\newblock Global classical solutions of the {V}lasov-{P}oisson system in three
  dimensions for general initial data.
\newblock {\em Journal of Differential Equations}, 95(2):281--303, 1992.

\bibitem{Rei2007}
G.~Rein.
\newblock Collisionless kinetic equations from astrophysics---the
  {V}lasov-{P}oisson system.
\newblock In {\em Handbook of differential equations: evolutionary equations.
  {V}ol. {III}}, Handb. Differ. Equ., pages 383--476. Elsevier/North-Holland,
  Amsterdam, 2007.

\bibitem{Sch1991}
J.~Schaeffer.
\newblock Global existence of smooth solutions to the {V}lasov-{P}oisson system
  in three dimensions.
\newblock {\em Communications in Partial Differential Equations},
  16(8-9):1313--1335, 1991.

\bibitem{ZM1994}
Y.~X. Zheng and A.~Majda.
\newblock Existence of global weak solutions to one-component
  {V}lasov-{P}oisson and {F}okker-{P}lanck-{P}oisson systems in one space
  dimension with measures as initial data.
\newblock {\em Communications on Pure and Applied Mathematics},
  47(10):1365--1401, 1994.

\end{thebibliography}

\end{document}